\newcommand{\C}{{\mathbb C}}
\newcommand{\const}{\operatorname{const.}}
\newcommand{\diam}{\operatorname{diam}}
\newcommand{\Dom}{\operatorname{Dom}}
\newcommand{\dvol}{\operatorname{dvol}}
\newcommand{\HH}{\operatorname{H}}
\newcommand{\IC}{\operatorname{IC}}
\newcommand{\Id}{\operatorname{Id}}
\newcommand{\IH}{\operatorname{IH}}
\newcommand{\Image}{\operatorname{Im}}
\newcommand{\Ker}{\operatorname{Ker}}
\newcommand{\Lip}{\operatorname{Lip}}
\newcommand{\R}{{\mathbb R}}
\newcommand{\Ric}{\operatorname{Ric}}
\newcommand{\supp}{\operatorname{supp}}
\newcommand{\vol}{\operatorname{vol}}
\newcommand{\Z}{{\mathbb Z}}
\numberwithin{equation}{section}
\theoremstyle{plain}
\newtheorem{lemma}[equation]{Lemma}
\newtheorem{theorem}[equation]{Theorem}
\newtheorem{proposition}[equation]{Proposition}
\newtheorem{corollary}[equation]{Corollary}
\theoremstyle{remark}
\newtheorem{remark}[equation]{Remark}
\newtheorem{example}[equation]{Example}
\begin{document}

\title[Eigenvalue estimates and differential form Laplacians]
{Eigenvalue estimates and differential form Laplacians on
Alexandrov spaces}

\author{John Lott}
\address{Department of Mathematics\\
University of California, Berkeley\\
Berkeley, CA  94720-3840\\
USA} \email{lott@berkeley.edu}

\thanks{Research partially supported by NSF grant
DMS-1510192}
\date{January 5, 2018}

\begin{abstract}
  We give upper bounds on the eigenvalues of the differential form
  Laplacian on a compact Riemannian manifold. The proof uses
  Alexandrov spaces with curvature bounded below.  We also construct
  differential form Laplacians on Alexandrov spaces.  Under a local
  biLipschitz assumption on the Alexandrov space,
  which is conjecturally always satisfied,
  we show that the differential form Laplacian has a compact resolvent.
  We identify its kernel with an intersection homology group.
\end{abstract}

\maketitle


\section{Introduction} \label{sec1}

For a closed connected Riemannian manifold, let $\lambda_k$ denote the
$k^{th}$ positive eigenvalue of the (nonnegative) function Laplacian,
counted with multiplicity.
In 1975, S.-Y. Cheng proved the following upper bound on the eigenvalues.

\begin{theorem} (Cheng \cite{Cheng (1975)}) \label{thm1.1}
  There is a function $c : \Z^+ \times \R \rightarrow \R^+$ with the
  following property. Given $n \in \Z^+$, $D \in \R^+$ and $K \in \R$,
  if $M$ is an $n$-dimensional closed connected
  Riemannian manifold with diameter $D$,
  and $\Ric_M \ge K g_M$, then
\begin{equation} \label{1.2}
  \lambda_k \le c(n, KD^2) \frac{k^2}{D^2}.
  \end{equation}
  \end{theorem}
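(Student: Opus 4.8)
The plan is to combine the min--max characterization of the eigenvalues with a packing argument along a diameter-realizing geodesic and the Bishop--Gromov volume comparison inequality. Writing $R(f) = \frac{\int_M |\nabla f|^2 \, \dvol}{\int_M f^2 \, \dvol}$ for the Rayleigh quotient, recall that for a closed connected manifold $\lambda_k \le \max_{0 \ne f \in V} R(f)$ for every $(k+1)$-dimensional subspace $V$ of Lipschitz functions on $M$; so it suffices to produce one such $V$ on which $R$ is bounded by $c(n, KD^2)\, k^2/D^2$.

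First I would fix a unit-speed minimizing geodesic $\gamma : [0,D] \to M$ realizing the diameter, set $x_i = \gamma(i D/k)$ for $i = 0, 1, \dots, k$ and $r = D/(2k)$, and observe that minimality gives $d(x_i, x_j) = |i-j|\,D/k \ge 2r$ for $i \ne j$, so the $k+1$ open balls $B(x_i, r)$ are pairwise disjoint. On each ball I would place a cutoff test function: fixing once and for all a Lipschitz $\phi : [0,\infty) \to [0,1]$ with $\phi \equiv 1$ on $[0, \tfrac12]$, $\phi \equiv 0$ on $[1, \infty)$ and $|\phi'| \le 2$, set $f_i(y) = \phi(d(x_i, y)/r)$. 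Then $f_i$ is supported in $B(x_i, r)$, equals $1$ on $B(x_i, r/2)$, and $|\nabla f_i| \le 2/r$ almost everywhere (since $|\nabla d(x_i, \cdot)| = 1$ a.e.), so
\begin{equation*}
  R(f_i) \;\le\; \frac{4}{r^2}\cdot\frac{\vol(B(x_i, r))}{\vol(B(x_i, r/2))}.
\end{equation*}

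This is where $\Ric_M \ge K g_M$ enters: Bishop--Gromov bounds the volume ratio by the corresponding ratio $\Phi(n, Kr^2)$ in the $n$-dimensional space form of constant sectional curvature $K/(n-1)$, and $\Phi$ is continuous in its arguments with $\Phi(n,0) = 2^n$. Since the $f_i$ (hence the $\nabla f_i$, and the products $f_if_j$) have pairwise disjoint supports, for $f = \sum_i a_i f_i$ in $V = \spann\{f_0, \dots, f_k\}$ one has $R(f) = \big(\sum_i a_i^2 \int |\nabla f_i|^2\big)\big/\big(\sum_i a_i^2 \int f_i^2\big) \le \max_i R(f_i)$, whence
\begin{equation*}
  \lambda_k \;\le\; \frac{4 k^2}{D^2}\, \Phi\!\Big(n,\ \tfrac{KD^2}{4k^2}\Big).
\end{equation*}

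Finally I would package the constant. When $K > 0$, Bonnet--Myers forces $KD^2 \le (n-1)\pi^2$, and in particular $r = D/(2k)$ stays below the diameter of the model sphere, so that $\Phi(n, Kr^2)$ is defined; in all cases $KD^2/(4k^2)$ lies in a bounded interval determined by $n$ and $KD^2$. Thus one may take $c(n, s) := 4\sup_{k \in \Z^+} \Phi\big(n, s/(4k^2)\big)$, which is finite because $\Phi(n,\cdot)$ is continuous and $\Phi(n, s/(4k^2)) \to 2^n$ as $k \to \infty$. I expect the only genuinely delicate point to be this bookkeeping --- verifying that the constant is a function of the scale-invariant quantity $KD^2$ alone rather than of $k$ or of the finer scale $Kr^2$; everything else is the standard cutoff-plus-volume-comparison estimate for the bottom of the Dirichlet spectrum of a metric ball.
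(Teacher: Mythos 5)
Your argument is correct, but it is a genuinely different route from Cheng's original proof, which is what the paper cites and briefly describes: Cheng transplants eigenfunctions of the model space of constant curvature $K/(n-1)$ onto $M$ via the exponential map at the points $x_i$, and controls the Rayleigh quotient of the transplants by the Bishop--G\"unther comparison for the Jacobian of $\exp_{x_i}$. Your version replaces the transplanted eigenfunctions by radial Lipschitz cutoffs and replaces the Jacobian comparison by the weaker Bishop--Gromov volume-ratio inequality; the packing of $k+1$ disjoint balls $B(x_i,r)$ with $r = D/(2k)$ along a diameter-realizing geodesic is the same in both, as is the observation that disjoint supports reduce $R$ on the span to $\max_i R(f_i)$. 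What you lose is sharpness --- Cheng's transplantation yields the clean comparison $\lambda_1(M) \le \lambda_1^{\mathrm{Dir}}(B_\kappa(D/2))$ against the first Dirichlet eigenvalue of the model ball, whereas the cutoff gives only a crude multiplicative constant $4\,\Phi$. What you gain is robustness: the cutoff-plus-volume-comparison argument needs no normal coordinates or smooth exponential map, only the metric and a doubling-type volume bound, which is exactly why this style of argument (cf.\ Gromov, Colbois--Maerten) extends to Alexandrov spaces and $\mathrm{RCD}$ spaces, and why it is the template the paper itself adapts in Theorems~\ref{thm1.7} and~\ref{thm1.3}, where one works on singular limit spaces and transplantation is unavailable. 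Two bookkeeping points you handle correctly but could state more crisply: the $f_i$ are linearly independent because their supports are non-empty and disjoint, so $\dim V = k+1$ as needed for the min--max bound on the $k$-th \emph{positive} eigenvalue; and for $K>0$ the finiteness of the model ratio $\Phi(n,\kappa r^2)$ for all $r$ up to the model diameter is automatic (the ratio is $\le$ total sphere volume over the half-ball volume, which is uniformly bounded once $\kappa r^2$ is bounded above by Bonnet--Myers), so the $\sup_k$ in your definition of $c(n,s)$ is genuinely finite.
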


We  give extensions of Cheng's result to the differential form Laplacian.  Let
$\lambda_{k,p}$ denote the $k^{th}$ positive eigenvalue of the
Hodge Laplacian $dd^* + d^*d$ on $p$-forms, counted with multiplicity.
First, we assume a lower volume bound.

\begin{theorem} \label{thm1.7}
  There is a function $C_1 : \Z^+ \times \R \times \R^+
  \rightarrow \R^+$ with the
  following property. Given $n \in \Z^+$, $D \in \R^+$, $K \in \R$
  and $v \in \R^+$,
  if $M$ is an $n$-dimensional closed connected
  Riemannian manifold with diameter $D$,
  sectional curvatures bounded below by $K$ and volume bounded below by
  $v$, then for all $p \in
  [0,n]$ we have
\begin{equation} \label{1.8}
  \lambda_{k,p} \le C_1(n, KD^2, vD^{-n}) \frac{k^{\frac{2}{n}}}{D^2}.
  \end{equation}
  \end{theorem}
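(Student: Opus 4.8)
The asserted inequality is scale invariant — under $g_M\mapsto c\,g_M$ both $\lambda_{k,p}$ and $k^{2/n}D^{-2}$ are multiplied by $c^{-1}$, while $KD^2$ and $vD^{-n}$ are unchanged — so the plan is first to rescale to $D=1$ and then to produce $C_1=C_1(n,K,v)$ with $\lambda_{k,p}(M)\le C_1\,k^{2/n}$. By Gromov's Betti-number theorem $b_p(M)\le B(n,K)$, so by the min-max principle it suffices, for each $k$, to exhibit a subspace $W\subset\Omega^p(M)$ of dimension at least $k+B(n,K)$ on which the Rayleigh quotient $R(\omega)=\bigl(\|d\omega\|^2+\|d^*\omega\|^2\bigr)/\|\omega\|^2$ is at most $C(n,K,v)\,k^{2/n}$. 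I will build $W$ as the span of roughly $k$ smooth $p$-forms with pairwise disjoint supports, each supported in a metric ball of radius $r\sim k^{-1/n}$ and each with $R\lesssim r^{-2}$.

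The geometric input — and the reason Alexandrov geometry enters — is the following consequence of Perelman's structure theory for spaces with curvature bounded below, applied to $M$ itself together with the noncollapsing bound $\vol(M)\ge v$: there are $\delta=\delta(n)>0$ and $\rho_0=\rho_0(n,K,v)>0$ such that the set $G\subset M$ of points that are $(n,\delta)$-strained at scale $\ge\rho_0$ has $\vol(G)\ge\tfrac12\vol(M)$, and for each $q\in G$ the strainer distance functions of $q$, mollified (Greene--Wu) to smooth functions $\tilde f_1,\dots,\tilde f_n$ on $B(q,\rho_0/100)$, satisfy $|\nabla\tilde f_i|\le2$, $|\nabla^2\tilde f_i|\le C(n,K)$, and the near-orthonormality $|d\tilde f_{i_1}\wedge\cdots\wedge d\tilde f_{i_p}|\ge c(n)>0$ almost everywhere (the first two bounds survive mollification of a $1$-Lipschitz, $C(n,K)$-semiconcave function, and the last is the $(n,\delta)$-strained condition at points of differentiability, the exceptional set being null). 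Obtaining the scale $\rho_0$ uniformly over the whole class is the crux: the class is precompact in the Gromov--Hausdorff topology with limits that, by noncollapsing, are genuine $n$-dimensional Alexandrov spaces with curvature $\ge K$, and an $n$-dimensional Alexandrov space is $(n,\delta)$-strained on a set of full $\mathcal H^n$-measure; a contradiction argument then promotes this to a uniform positive scale on a definite fraction of the volume (alternatively, one invokes the quantitative statement directly from Burago--Gromov--Perelman).

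Granting this, set $r=\min\{\rho_0/100,\ (c_0/k)^{1/n}\}$ with $c_0=c_0(n,K,v)$ to be chosen, and let $\{x_1,\dots,x_N\}\subset G$ be a maximal $r$-separated subset. By the Bishop--Gromov relative volume comparison (using $\Ric_M\ge(n-1)K$) one has $C(n,K)^{-1}v\,r^{\,n}\le\vol\bigl(B(x_j,r)\bigr)\le C(n,K)\,r^{\,n}$; since the $B(x_j,r)$ cover $G$ while the $B(x_j,r/2)$ are disjoint, $N\ge\vol(G)\big/\bigl(C(n,K)r^{\,n}\bigr)\ge v k\big/\bigl(2C(n,K)c_0\bigr)$, so for $c_0$ small $N\ge k+B(n,K)$ once $k$ exceeds some $k_{\min}(n,K,v)$ (the finitely many smaller $k$ are absorbed into $C_1$ by monotonicity of $\lambda_{k,p}$ in $k$). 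On $B(x_j,r)$ put $\omega_j=\chi_j\,d\tilde f_1^{(j)}\wedge\cdots\wedge d\tilde f_p^{(j)}$ with $\chi_j$ a cutoff supported in $B(x_j,r/2)$, equal to $1$ on $B(x_j,r/4)$ and with $|\nabla\chi_j|\le C(n)/r$. Then $\|\omega_j\|^2\ge c(n)^2\vol\bigl(B(x_j,r/4)\bigr)$, and using the pointwise bounds $|d\eta|+|d^*\eta|\le C(n)|\nabla\eta|$ together with the bounds on $\nabla\chi_j$, $\nabla\tilde f_i$, $\nabla^2\tilde f_i$ and $r\le1$, one gets $\|d\omega_j\|^2+\|d^*\omega_j\|^2\le C(n,K)\,r^{-2}\vol\bigl(B(x_j,r/2)\bigr)$; volume comparison makes the two volumes comparable, so $R(\omega_j)\le C(n,K)\,r^{-2}$. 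The $\omega_j$ have disjoint supports, hence $W=\spann\{\omega_j\}$ has $\dim W=N\ge k+B(n,K)$ and $R(\omega)\le\max_jR(\omega_j)\le C(n,K)\,r^{-2}\le C_1(n,K,v)\,k^{2/n}$ for all nonzero $\omega\in W$. Min-max gives $\lambda_{k,p}(M)\le C_1(n,K,v)\,k^{2/n}$, which rescales to \eqref{1.8}.

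I expect the main obstacle to be the geometric input of the second paragraph — a uniform strainer scale $\rho_0(n,K,v)$ on a definite fraction of the volume, together with the bounds on the mollified strainer functions. Everything else (the scaling reduction, the Betti-number bound, the Bishop--Gromov packing, and the one-ball Rayleigh-quotient estimate) is routine. It is exactly this point — converting a lower sectional curvature bound into usable coordinates — where Alexandrov geometry is indispensable: a Weitzenböck or heat-kernel argument would instead require a lower bound on the curvature term acting on $p$-forms, which a lower sectional bound does not supply for $p\ge2$.
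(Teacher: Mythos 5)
Your overall plan — rescale to $D=1$, pack $\sim k$ balls of radius $r\sim k^{-1/n}$, put a test $p$-form on each built from strainer coordinates, and apply min-max — is a natural direct approach, in contrast to the paper's argument by contradiction via Gromov--Hausdorff limits. The packing, the Bishop--Gromov estimates, the uniform strainer scale $\rho_0(n,K,v)$ on a definite fraction of the volume, and the Betti-number bound to absorb the kernel are all fine.

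The gap is exactly the step you flag but then treat as routine: the asserted bound $|\nabla^2\tilde f_i|\le C(n,K)$ on mollified strainer functions, which you need in order to control $\|d^*\omega_j\|$. With only $\sec\ge K$ this two-sided Hessian bound is not available. Semiconcavity gives $\nabla^2 f\le C\,g$ in the barrier sense, i.e.\ a \emph{one-sided} bound; there is no corresponding lower Hessian bound without an \emph{upper} sectional curvature bound (Rauch), and this class has no injectivity radius lower bound either (Cheeger's estimate needs two-sided curvature control). Concretely, mollifying a $1$-Lipschitz function at scale $\epsilon$ only gives $|\nabla^2(f\ast\eta_\epsilon)|\lesssim 1/\epsilon$ in general; and even writing the Riemannian Hessian as $\nabla^2_g\tilde f=\partial^2\tilde f-\Gamma\,\partial\tilde f$ in strainer coordinates, the term $\Gamma\,\partial\tilde f$ is not controlled, since $\Gamma$ involves first derivatives of the metric, which over this class of manifolds are only bounded as measures, not in $L^\infty$. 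This is precisely the obstruction the paper's introduction singles out: the quantity $d^*\omega$ ``involves first derivatives of the metric tensor,'' and a lower curvature bound does not control them. Your Rayleigh quotient $R(\omega)=(\|d\omega\|^2+\|d^*\omega\|^2)/\|\omega\|^2$ therefore cannot be estimated this way.

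The paper's key device avoids $d^*$ altogether: it works with the Laplacian $\triangle_*$ on $\Omega^p/\overline{\Image(d)}$, for which the min-max formula (1.14) reads $\lambda_{k,p}=\inf_V\sup_{\omega\in V}\|d\omega\|^2/\|\omega\|_{\Omega^p/\Ker(d)}^2$ and involves only $\|d\omega\|^2$, i.e.\ only exterior derivatives, no covariant derivatives and no metric derivatives. The full Hodge spectrum is then recovered from the Hodge decomposition. Your construction can likely be repaired along these lines: keep $\omega_j=\chi_j\,d\tilde f_1^{(j)}\wedge\cdots\wedge d\tilde f_p^{(j)}$, note that $d\omega_j=d\chi_j\wedge d\tilde f_1^{(j)}\wedge\cdots\wedge d\tilde f_p^{(j)}$ involves only first derivatives and is controlled by $|\nabla\chi_j|$ and $|\nabla\tilde f_i|$, and then bound the quotient norm $\|\omega_j\|_{\Omega^p/\Ker(d)}$ from below. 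That last step is not automatic from disjoint supports alone (the competitor $\sigma\in\Ker(d)$ is global) and requires an argument of the kind the paper carries out via the biLipschitz chart and the boxwise comparison; but it is second-order-free, which is the point. As written, your proof does not close this gap.
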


To remove the lower volume bound,
we use the notion of a strainer
\cite[\S 5]{Burago-Gromov-Perelman (1992)}.
Given a complete Riemannian manifold
(or Alexandrov space) with curvature bounded below by $K \in \R$, and an
integer
$s > 0$, an $s$-strainer of quality $\delta$
and size $S$ at a point $m$ consists of points $\{a_i, b_i\}_{i=1}^s$
with $d(m,a_i) = d(m, b_i) = S$
so that
\begin{align}
\tilde{\measuredangle} a_i m b_i > \pi - \delta, \: \: \: \: \: \: & 
\tilde{\measuredangle} a_i m a_j > \frac{\pi}{2} - \delta, \\
\tilde{\measuredangle} a_i m b_j > \frac{\pi}{2} - \delta,
 \: \: \: \: \: \: &
\tilde{\measuredangle} b_i m b_j > \frac{\pi}{2} - \delta, \notag
\end{align}
whenever $i \neq j$. Here $\tilde{\measuredangle}$ is the comparison
angle at $m$, relative to the  model space of constant sectional curvature $K$.

\begin{theorem} \label{thm1.3}
  There is a function $C_2 : \Z^+ \times \R \rightarrow \R^+$ with the
  following property. Given $n \in \Z^+$ and $K \in \R$,
  let $M$ be an $n$-dimensional closed connected
  Riemannian manifold with
  sectional curvatures bounded below by $K$.  Suppose that there is
  some $m \in M$ with an $s$-strainer of quality $\frac{1}{10}$ and
  size $S$, where $1 \le s \le n$. Then for all $p \in
  [0,s]$, we have
  \begin{equation} \label{1.4}
    \lambda_{k,p} \le C_2(n, KS^2) \frac{k^{\frac{2}{s}}}{S^2}.
\end{equation}
  \end{theorem}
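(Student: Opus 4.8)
The plan is to pull back a large family of test $p$-forms from a Euclidean $s$-ball of radius $\sim S$ to $M$ through the almost-Riemannian submersion provided by the strainer, and then to apply the min-max principle. All the comparison geometry is packaged into the following assertion, which I would extract from \cite[\S 5]{Burago-Gromov-Perelman (1992)} (together with a mollification of the relevant distance functions, legitimate since $M$ is a manifold): there are a constant $c_0 = c_0(n, KS^2) \in (0,1)$ and smooth functions $f = (f_1,\dots,f_s)$ on $B(m,c_0S)$, morally the recentred distances $d(a_i,\cdot) - S$, such that --- with all further constants depending only on $n$ and $KS^2$ --- (i) $f(m)=0$, $f$ is open near $m$ onto a set containing $B(0,c_1S) \subset \R^s$, and every fibre of $f$ over $B(0,c_1S)$ meets $B(m,\tfrac12 c_0S)$; (ii) $|\langle \nabla f_i, \nabla f_j \rangle - \delta_{ij}| \le \Phi(n)$ pointwise with $\Phi(n)$ small, so $f$ is a submersion and $\{df_i\}$ is an almost-orthonormal coframe; (iii) $|\Hess f_i| + |\Delta_M f_i| \le c_2 S^{-1}$ on $B(m,c_0S)$ --- here one uses that $\tilde{\measuredangle}\, a_i m b_i > \pi - \tfrac1{10}$ forces $d(a_i,b_i)$ to be close to $2S$, hence $m$ to lie at distance $\gtrsim S$ from the cut locus of $a_i$, after which curvature $\ge K$ yields the two-sided Hessian bound by comparison; (iv) both $f_*\dvol_M$ and $f_*(\dvol_M$ restricted to $B(m,\tfrac12 c_0S))$ have, on $B(0,c_1S)$, densities bounded above and below by positive constants, so that $L^2(M)$-norms of objects supported near $f^{-1}(B(0,c_1S))$ are two-sidedly comparable to the corresponding $L^2$-norms on $B(0,c_1S) \subset \R^s$.

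Granting this, take $1 \le p \le s$; the case $p=0$ is Cheng's original argument, using $k+1$ bumps and the fact that the $k$-th positive eigenvalue of $\Delta_0$ is its $(k+1)$-st eigenvalue, with the strainer replacing the Ricci bound. Set $R = c_1 S$, choose $k$ pairwise disjoint Euclidean balls $B(y_j,\rho) \subset B(0,R)$ with $\rho \ge c(s) R k^{-1/s}$, and on each a fixed rescaled bump $\Phi_j \in C_c^\infty(B(y_j,\rho))$, so that $\|\Hess_{\R^s}\Phi_j\|_{L^2}^2 \le c(s)\rho^{-2}\|\nabla_{\R^s}\Phi_j\|_{L^2}^2$ and $\|\nabla_{\R^s}\Phi_j\|_{L^2}^2 \le c(s)\|\nabla'\Phi_j\|_{L^2}^2$, where $\nabla'$ is the partial gradient in the $s-p+1$ coordinates $y_1, y_{p+1},\dots,y_s$. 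Let $\chi$ be a cutoff equal to $1$ on $B(m,\tfrac12 c_0S)$, vanishing near $\partial B(m,c_0S)$, with $|\nabla\chi| + S|\Hess\chi| \le C(n,KS^2)S^{-1}$ (needed only because for $s<n$ the $f$-fibres reach $\partial B(m,c_0S)$), and put
\[
\gamma_j := \chi\cdot\Phi_j(f)\,df_2 \wedge \cdots \wedge df_p, \qquad \omega_j := d\gamma_j ,
\]
extended by zero to $M$. Then $\omega_j$ is a globally exact $p$-form, so $\omega_j \perp \mathcal H^p(M)$ and $d\omega_j=0$. Using $d(df_a)=0$ one finds $\omega_j = \chi\sum_{i \in \{1\}\cup\{p+1,\dots,s\}} (\partial_i\Phi_j)(f)\,df_i\wedge df_2\wedge\cdots\wedge df_p + (\text{terms with }d\chi)$; hence, by (ii)--(iii) and the bounds on $\chi$, on $B(m,\tfrac12 c_0S)$ the pointwise norm $|\omega_j|^2$ is comparable to $|\nabla'\Phi_j|^2(f)$, while globally $|\omega_j| + S|d^*\omega_j| \le C(n,KS^2)\bigl(|\nabla'\Phi_j|(f) + S|\Hess_{\R^s}\Phi_j|(f) + |\nabla_{\R^s}\Phi_j|(f) + S^{-1}|\Phi_j|(f)\bigr)$. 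The supports of the $\omega_j$, and of the $d^*\omega_j$, are pairwise disjoint, and each $\omega_j \ne 0$. Converting $M$-norms to $\R^s$-norms by (iv), using $\rho \le S$ and $\|\Phi_j\|_{L^2} \le c(s)\rho\|\nabla'\Phi_j\|_{L^2}$,
\[
\frac{\|d^*\omega_j\|_{L^2(M)}^2}{\|\omega_j\|_{L^2(M)}^2}\ \le\ C(n,KS^2)\,\frac{\|\Hess_{\R^s}\Phi_j\|_{L^2}^2 + S^{-2}\|\nabla_{\R^s}\Phi_j\|_{L^2}^2}{\|\nabla'\Phi_j\|_{L^2}^2}\ \le\ C(n,KS^2)\,\rho^{-2}\ \le\ C(n,KS^2)\,\frac{k^{2/s}}{S^2}.
\]

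Now let $V := \spann\{\omega_1,\dots,\omega_k\}$, a $k$-dimensional subspace of the exact $p$-forms. Because $d\omega_j = 0$ and the $\omega_j$ (respectively the $d^*\omega_j$) are mutually orthogonal, the Rayleigh quotient $\bigl(\|d\omega\|^2 + \|d^*\omega\|^2\bigr)/\|\omega\|^2$ is $\le C(n,KS^2)k^{2/s}/S^2$ for all $0 \ne \omega \in V$. Since exact forms are $L^2$-orthogonal to $\mathcal H^p(M)$, for $v \in V$ and $h \in \mathcal H^p(M)$ one has $d(v+h)=0$, $d^*(v+h)=d^*v$ and $\|v+h\|^2 = \|v\|^2 + \|h\|^2 \ge \|v\|^2$, so the Rayleigh quotient of $v+h$ still satisfies the same bound. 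Hence it is $\le C(n,KS^2)k^{2/s}/S^2$ on the $(k+b_p)$-dimensional space $V \oplus \mathcal H^p(M)$, $b_p := \dim \mathcal H^p(M)$. As the $k$-th positive eigenvalue of the Hodge Laplacian on $p$-forms is its $(k+b_p)$-th eigenvalue, Courant--Fischer gives $\lambda_{k,p} \le C(n,KS^2)k^{2/s}/S^2$, i.e.\ \eqref{1.4} with $C_2 := C$. The substantial work is the geometric package (i)--(iv), and within it the two-sided Hessian bound (iii) (which is why distance functions, rather than arbitrary adapted functions, are used); the rest is formal, the two points being that $d(df_i)=0$ kills $d\omega_j$ and that working with \emph{exact} test forms bypasses the a priori unbounded Betti number $b_p$.
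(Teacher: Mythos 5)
Your approach is a direct (non-contradiction) Dodziuk-style transplantation argument, which is genuinely different from the paper's proof; the paper passes to a Gromov--Hausdorff limit and pulls Lipschitz forms back through a Yamaguchi fibration. Unfortunately the crucial ingredient of your geometric package, the two-sided Hessian bound in (iii), does not hold under the hypotheses of the theorem, and that gap cannot be repaired by the route you propose.

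The issue is elementary: Hessian comparison from a lower sectional curvature bound $\mathrm{Sec} \ge K$ gives only the upper bound $\Hess\, d(a_i,\cdot) \le \frac{\operatorname{sn}_K'}{\operatorname{sn}_K}\,(g - dr\otimes dr)$ (semiconcavity). The matching lower bound $\Hess\, d(a_i,\cdot) \ge -c/S$ requires an \emph{upper} curvature bound, which you do not have; the strainer hypothesis constrains the metric geometry near $m$ only in a $C^0$/comparison-angle sense and places no upper bound on curvature anywhere in $B(m,c_0 S)$. Concretely, one can take a flat model and superimpose tiny bumps of enormous positive curvature at points of $B(m, c_0 S)$ away from $m$; this does not disturb the comparison angles $\tilde{\measuredangle}\, a_i m b_i$ to leading order, but it drives $\Hess\, d(a_i,\cdot)$ (and hence $d^*\omega_j$) to $-\infty$ along those bumps. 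The near-constancy of $d(a_i,\cdot) + d(b_i,\cdot)$ near $m$ is a $C^0$ statement and yields no cancellation at the level of second derivatives. Your parenthetical "$m$ lies far from the cut locus of $a_i$" addresses a different obstruction (conjugate points / non-smoothness), not the absence of an upper curvature bound. This is exactly why Dodziuk's theorem, which your argument is modeled on, assumes a \emph{double-sided} sectional curvature bound and an injectivity radius bound, and why the present paper works by contradiction: in the limiting Alexandrov space only Lipschitz control on the adapted coordinates is available, and the paper substitutes gradient-flow arguments (Lemma \ref{lem4.16} and Lemma \ref{chi}) and fiber integration against a normalized Lipschitz form $\chi_i$ for the Hessian estimates your construction needs. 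Your observation that working with exact test forms in $\Omega^p/\Ker(d)$ sidesteps the Betti number is correct and matches the paper's strategy; the obstruction is purely the $C^2$ control in (iii).
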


\begin{corollary} 
  Given $n \in \Z^+$, there is some $\kappa_n < \infty$ 
 with the
  following property.
  Let $M$ be an $n$-dimensional closed connected
  Riemannian manifold with nonnegative sectional curvature.
Suppose that there is
  some $m \in M$ with an $s$-strainer of quality $\frac{1}{10}$ and
  size $S$, where $1 \le s \le n$. Then for all $p \in
  [0,s]$, we have
  \begin{equation}
    \lambda_{k,p} \le \kappa_n \frac{k^{\frac{2}{s}}}{S^2}.
  \end{equation}
  \end{corollary}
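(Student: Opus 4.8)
The plan is to obtain the statement as an immediate specialization of Theorem~\ref{thm1.3} to the curvature bound $K = 0$. Indeed, if $M$ has nonnegative sectional curvature then its sectional curvatures are in particular bounded below by $K = 0$, and the hypothesized $s$-strainer of quality $\frac{1}{10}$ and size $S$ at $m$ is precisely a strainer in the sense defined above, the relevant comparison angles $\tilde{\measuredangle}$ being those of the model space of constant curvature $0$, i.e.\ Euclidean comparison angles. Applying Theorem~\ref{thm1.3} with this value of $K$ then gives, for every $p \in [0,s]$,
\[
  \lambda_{k,p} \le C_2(n, K S^2) \, \frac{k^{2/s}}{S^2} = C_2(n, 0) \, \frac{k^{2/s}}{S^2},
\]
so one simply sets $\kappa_n := C_2(n, 0)$, which is finite and depends only on $n$.

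There is essentially no obstacle beyond Theorem~\ref{thm1.3} itself; the only point worth checking is that the second argument of $C_2$ really does collapse to $0$ here, leaving no residual dependence on $S$ alone. This is consistent with scaling: under $g_M \mapsto c^2 g_M$ each eigenvalue $\lambda_{k,p}$ is multiplied by $c^{-2}$ and the strainer size $S$ by $c$, while nonnegativity of the sectional curvatures and the strainer quality are preserved (comparison angles being scale invariant), so both sides of the asserted inequality scale identically and no dimensionful constant can intervene. Thus the corollary follows directly from Theorem~\ref{thm1.3}.
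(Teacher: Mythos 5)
Your proof is correct and is exactly the intended argument: the paper offers no separate proof because the corollary is an immediate specialization of Theorem \ref{thm1.3} to $K = 0$, where $C_2(n, KS^2) = C_2(n,0)$ depends only on $n$. The scaling consistency check you add is a nice sanity check but not needed.
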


\begin{remark}
  The choice of $\frac{1}{10}$ for the quality of the strainer is arbitrary.
  In the proof of Theorem \ref{thm1.3} we actually get upper
  eigenvalue bounds for the Laplacian on $\Omega^p(M)/\Ker(d)$ when
  $p < s$, and for the Laplacian on $\Image(d) \subset \Omega^s(M)$.

  Theorem \ref{thm1.3} implies Theorem \ref{thm1.1}
  under the stronger assumption of a
  lower sectional curvature bound, by taking $s = 1$ and $S = \frac{D}{2}$. 
  One would not expect to be able to control
   eigenvalues of the $p$-form Laplacian from a lower Ricci curvature bound if
  $p \notin \{0,1,n-1,n\}$.
\end{remark}

\begin{remark} \label{rem1.11}
  Theorem \ref{thm1.7} actually follows from Theorem \ref{thm1.3}. Under the
  hypotheses of Theorem \ref{thm1.7}, after rescaling the diameter to be $1$,
  convergence theory implies that there is some point with an
  $n$-strainer of quality
  $\frac{1}{10}$ and a certain size.
\end{remark}

\begin{remark}
  To get
  upper eigenvalue bounds for the $p$-form Laplacian, Theorem \ref{thm1.3}
  has an assumption about the existence of 
  a $p$-strainer at some point. The need for some such assumption
  can be seen
  by taking $M = X \times S^N$, where $X$ is a closed Riemannian
  manifold, and shrinking the $S^N$-factor. If
  $\dim(X) < p$  and $N > p$
  then the eigenvalues of the $p$-form Laplacian on $M$
  go to infinity.  
  As the sphere shrinks, there is clearly no $p$-strainer on $M$ of quality
  $\frac{1}{10}$ whose size is uniformly bounded below.
  
  Taking $s = \dim(X)$, this example also shows the sharpness of the
  exponent
  $\frac{2}{s}$ in (\ref{1.4}), which corresponds to Weyl-type
  asymptotics on an $s$-dimensional manifold.
\end{remark}

The constant $c$ in Theorem \ref{thm1.1} can be made explicit.
The constants $C_1$ and $C_2$ in
Theorems \ref{thm1.7} and \ref{thm1.3} are not explicit.  The reason is that
the proofs of Theorems \ref{thm1.7} and \ref{thm1.3} are by a
contradiction argument.
One common feature of all the proofs is the use of a 
minmax argument on an appropriate class of test functions or test forms.
To prove Theorem \ref{thm1.1}, Cheng
transplanted functions from a model space, using the exponential map
from a point. Dodziuk extended Cheng's result, by transplanting
differential forms from a model space, to prove an analog of
Theorem \ref{thm1.7} under the stronger
  assumptions of a double sided curvature bound and a lower bound on the
  injectivity radius
  \cite{Dodziuk (1982)}.
We instead pullback differential forms from an
Alexandrov space (with curvature bounded below)
that arises in the contradiction argument.

This leads to the question of whether differential
form Laplacians make sense for Alexandrov spaces.
(The function Laplacian on an Alexandrov space was studied in
\cite{Kuwae-Machigashira-Shioya (2001),Shioya (2001)}.)
To see some of the issues involved, note that on a smooth
Riemannian
manifold, when written in local coordinates, the differential
form Laplacian $dd^* + d^* d$ involves two derivatives of the
metric tensor.  An Alexandrov space has a dense open set with
the structure of a Riemannian Lipschitz manifold, meaning
in particular
that there is a Riemannian metric whose components, in local coordinates,
are in $L^\infty_{loc}$ \cite{Perelman}. Hence defining $dd^* + d^* d$
directly on an Alexandrov space does not look promising.

Instead of
trying to directly define the differential form Laplacian as an operator, one
could try to define the putative spectrum. On a smooth closed
Riemannian manifold $M$,
the minmax formula says that
\begin{equation} \label{1.13}
  \lambda_{k,p} = \inf_V \sup_{\omega \in V, \omega \neq 0}
  \frac{|d\omega|_{L^2}^2 + |d^* \omega|_{L^2}^2
  }{
|\omega|_{L^2}^2    },
\end{equation}
where $V$ ranges over $k$-dimensional subspaces of $\Omega^p(M)$.
In local coordinates, $d^* \omega$ involves first
derivatives of the metric tensor.  On an Alexandrov space,
one knows that the first
derivatives of the metric components
exist as measures \cite{Perelman}, but this is not
enough to make sense of (\ref{1.13}).

To gain another derivative, we use the observation,
essentially due to
Cheeger and Dodziuk \cite{Dodziuk (1982)}, 
that the minmax equation (\ref{1.13}) takes a nicer form if we
look instead
at the Laplacian $\triangle_*$ on $\Omega^*(M)/\overline{\Image(d)}$.
For this Laplacian, the minmax equation becomes
\begin{equation} \label{1.14}
  \lambda_{k,p} = \inf_V \sup_{\omega \in V, \omega \neq 0}
  \frac{|d\omega|_{L^2}^2
  }{
|\omega|_{L^2}^2    },
\end{equation}
where $V$ now ranges over $k$-dimensional subspaces of
$\Omega^p(M)/\overline{\Image(d)}$. The right-hand side of (\ref{1.14})
does not involve any derivatives of the metric tensor.
Using the Hodge decomposition
and the isomorphism 
$\Omega^*(M)/\overline{\Image(d)} \cong
\Ker(dd^* + d^*d) \oplus \overline{\Image(d^*)}$, the
spectrum of $\triangle_*$ is the same as the
spectrum of $dd^* + d^* d$, with the multiplicities related by
a factor of at most two.

For this reason, in making sense of a differential form Laplacian on an
Alexandrov space, we only consider an analog of the Laplacian $\triangle_*$
on $\Omega^*(M)/\overline{\Image(d)}$.

\begin{theorem} \label{thm1.15}
  If $X$ is a compact Alexandrov space then there is a well-defined
  nonnegative self-adjoint differential form Laplacian $\triangle_*$.
  When $X$ is a smooth Riemannian manifold $M$ with
  (possibly empty) convex boundary,
  the operator $\triangle_*$ becomes the usual Hodge
  Laplacian on $\Omega^*(M)/\overline{\Image(d)}$
  with relative (Dirichlet) boundary conditions.
\end{theorem}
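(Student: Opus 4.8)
The plan is to realize $\triangle_*$ as the Laplacian attached to the $L^2$ de~Rham complex on the regular part of $X$, via the general formalism of Hilbert complexes, and then to recognize this object in the smooth case. Write $n = \dim X$ and let $\mu$ be the $n$-dimensional Hausdorff measure on $X$; since $X$ is compact, $\mu(X) < \infty$ by volume comparison \cite{Burago-Gromov-Perelman (1992)}. By the structure theory of Alexandrov spaces \cite{Perelman}, the manifold points form a dense open \emph{regular set} $X^{\mathrm{reg}} \subseteq X$ which carries the structure of a Riemannian Lipschitz manifold --- in particular a Riemannian metric $g$ whose coefficients are, in suitable coordinates, in $L^\infty_{\mathrm{loc}}$ --- and $\mu(X \setminus X^{\mathrm{reg}}) = 0$. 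Hence for each $p$ the space $L^2\Omega^p(X)$ of measurable $p$-forms $\omega$ on $X^{\mathrm{reg}}$ with $\int_X |\omega|^2_g \, d\mu < \infty$ is a separable Hilbert space, and the smooth $p$-forms $\Omega^p_c$ compactly supported in $X^{\mathrm{reg}}$ form a dense subspace.

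For the differential, I would let $d_p$ be the closure in $L^2$ of exterior differentiation on $\Omega^p_c$. This closure exists: if $\omega_i \in \Omega^p_c$ with $\omega_i \to 0$ and $d\omega_i \to \eta$ in $L^2$, then for every $\phi$ smooth and compactly supported in $X^{\mathrm{reg}}$, integrating the Leibniz identity over $X^{\mathrm{reg}}$ gives $\int_X \eta \wedge \phi = \lim_i \int_X d\omega_i \wedge \phi = \pm \lim_i \int_X \omega_i \wedge d\phi = 0$ (using only that $g$ is locally bounded and $\mu(X) < \infty$), so $\eta = 0$. Since $d^2 = 0$ on $\Omega^*_c$, passing to limits using closedness of $d_p$ gives $d_p d_{p-1} = 0$, and as $\Ker(d_p)$ is closed this yields $\overline{\Image(d_{p-1})} \subseteq \Ker(d_p)$; thus $\big(L^2\Omega^\bullet(X), d_\bullet\big)$ is a Hilbert complex in the sense of Br\"uning and Lesch. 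Let $\mathcal H^p = \Ker(d_p) \cap \Ker(d_{p-1}^*)$ be the space of $L^2$-harmonic $p$-forms; the complex admits the weak Hodge decomposition $L^2\Omega^p(X) = \mathcal H^p \oplus \overline{\Image(d_{p-1})} \oplus \overline{\Image(d_p^*)}$. I then define $\triangle_{*,p}$ to be the nonnegative self-adjoint operator on the quotient Hilbert space $L^2\Omega^p(X)/\overline{\Image(d_{p-1})}$ associated to the quadratic form $\bar\omega \mapsto |d_p \omega|^2_{L^2}$ with form domain the image of $\Dom(d_p)$; this is closed, densely defined and nonnegative because $d_p$ is closed and annihilates $\overline{\Image(d_{p-1})}$, so (by Kato--Friedrichs, or directly by the Hilbert-complex machinery) it defines a unique such $\triangle_{*,p}$. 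Under $L^2\Omega^p(X)/\overline{\Image(d_{p-1})} \cong \mathcal H^p \oplus \overline{\Image(d_p^*)}$ it is $0 \oplus d_p^* d_p$ on the coexact part, so $\Ker(\triangle_{*,p}) = \mathcal H^p$ is the reduced $L^2$-cohomology of $X^{\mathrm{reg}}$; set $\triangle_* = \bigoplus_p \triangle_{*,p}$.

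For the last assertion, suppose $X = M$ is a smooth compact Riemannian manifold with (possibly empty) convex boundary. Convexity of $\partial M$ is precisely what makes $M$ an Alexandrov space, and then $X^{\mathrm{reg}} = M^\circ$, the interior. Thus $d_p$ above is the \emph{minimal} closed extension of $d$ on $M$, namely the closure of $\Omega^p_c(M^\circ)$, whose domain consists of the $\omega \in L^2$ with $d\omega \in L^2$ whose pullback to $\partial M$ vanishes. It is classical --- and is a guiding example in Dodziuk's work \cite{Dodziuk (1982)} and in the Hilbert-complex literature --- that the Laplacian of this complex on $\Omega^*(M)/\overline{\Image(d)}$ is the Hodge Laplacian with relative (Dirichlet) boundary conditions, whose kernel is $H^*(M, \partial M)$; when $\partial M = \emptyset$ the minimal and maximal extensions of $d$ coincide and one recovers the ordinary Hodge Laplacian on $\Omega^*(M)/\overline{\Image(d)}$. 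This is the stated identification.

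The main obstacle is the structural input used in the first two steps: one must invoke Alexandrov structure theory to guarantee that the locus $X^{\mathrm{reg}}$ carrying a genuine (if only Lipschitz) Riemannian structure is dense and of full $\mu$-measure, so that $L^2\Omega^\bullet(X)$ and $d_\bullet$ are canonically attached to $X$ and really do form a Hilbert complex; ideally one also verifies that $X \setminus X^{\mathrm{reg}}$ has zero capacity, which rules out any spurious distributional contribution along the singular set in the identity $d^2 = 0$ and in the Hodge decomposition, and makes the construction robust. Granting this, extracting the self-adjoint $\triangle_*$ from the Hilbert complex, and matching it with the classical relative Hodge theory of a manifold with boundary, are routine, so these are where I would place the remaining lighter work.
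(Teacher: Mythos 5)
Your construction and the paper's are genuinely different, though both give a valid proof of the statement as written. The paper defines the domain of $d$ on $p$-forms by a weak Stokes identity tested against \emph{Lipschitz forms on all of $X$} (the K\"ahler differential forms of $\Lip(X)$, equation (\ref{2.2})), which do not vanish near $X \setminus X^*$; it is precisely this non-vanishing that forces the relative boundary conditions in the smooth case. You instead take $d$ to be the minimal closed extension from $\Omega^*_c(X^{\mathrm{reg}})$. In the manifold-with-boundary case these two domains coincide --- both yield $\{\omega : d\omega\in L^2,\ \iota^*\omega = 0\}$ --- so your proof of the identification with the relative Hodge Laplacian is sound (and essentially what the paper verifies in Examples \ref{ex2.3} and \ref{ex2.19}). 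The Hilbert-complex framework is a clean way to package the existence of the self-adjoint $\triangle_*$; the paper achieves the same via a closed quadratic form attached to the closed operator $d$, so this part is a routine difference of language.

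What each approach buys is more significant. On a general Alexandrov space your $\Dom(d_p) = \overline{\Omega^p_c(X^*)}$ is contained in the paper's $\Omega^p_{L^2,d}(X)$ (any limit of compactly supported forms trivially satisfies (\ref{2.2})), but equality is a nontrivial density statement --- exactly the ``zero capacity'' point you flag but do not prove. For $p$-forms on singular spaces this is not automatic; until it is established, your $\triangle_*$ could be a strictly different operator, with a different kernel, which would break the identification with intersection homology in Theorem \ref{thm1.16}. The paper's weak formulation is chosen precisely because it sidesteps this: it is manifestly independent of the choice of $X^*$ (Remark \ref{rem2.6}) and is biLipschitz invariant (Lemma \ref{lem4.3}), both of which are load-bearing in Sections \ref{sec4} and \ref{sec5}. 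So your route proves Theorem \ref{thm1.15} in isolation, but to hook into the rest of the paper you would need to (i) verify the density/capacity claim, or (ii) re-prove biLipschitz invariance and the K\"unneth/cone computations of Section \ref{sec5} for the minimal extension. As a smaller point, note that the paper's remark about the ``closure of $\Omega^*_{\Lip}(X)$'' giving \emph{absolute} boundary conditions is not in tension with your construction: your test objects are compactly supported smooth forms on the interior, which is a different, strictly smaller class than Lipschitz forms on $X$, and the two closures differ.
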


We prove Theorem \ref{thm1.7} in the generality of compact Alexandrov spaces.
The use of $\Omega^*/\Ker(d)$ is key in proving
Theorems \ref{thm1.7} and \ref{thm1.3}. Their proofs do not need
the existence of the differential form Laplacian on the limit space,
but rather the existence of differential forms.

We construct $\triangle_*$ more generally for compact metric
spaces $X$ that have an open subset, of full Hausdorff measure, with
the structure of a Riemannian Lipschitz manifold.
The basic analytic
property of $\triangle_*$ that one would like to show is that
$(I + \triangle_*)^{-1}$
is compact; this implies discreteness of the spectrum of
$\triangle_*$.  In order to show that
$(I + \triangle_*)^{-1}$ is compact,
it is necessary to make an additional assumption about
$X$. To motivate this assumption, we recall that in
a finite dimensional
Alexandrov space $X$, any $x \in X$ has
a neighborhood that is homeomorphic to the truncated tangent cone $T^1_xX$
\cite{Kapovitch (2007),Perelman2}. It seems likely that
 any $x \in X$ has
 a neighborhood that is biLipschitz homeomorphic to $T^1_x X$;
 this has been claimed, although no proof is available.
Based on this,
 we consider a class ${\mathcal C}_*$ of
 compact metric spaces that are Lipschitz analogs of the topological
 multiconical spaces (MCS) introduced in
 \cite{Siebenmann (1972)} and used in \cite{Kapovitch (2007),Perelman2}.
 First, ${\mathcal C}_0$ consists of finite metric spaces. Inductively, if
 $X \in {\mathcal C}_n$ with $n \ge 1$
 then any point in $X$ has a neighborhood that
 is biLipschitz homeomorphic to the truncated open metric cone over some
 element of ${\mathcal C}_{n-1}$ with diameter at most $\pi$.
 Conjecturally, any $n$-dimensional compact
 Alexandrov space is an element of ${\mathcal C}_n$.  (If one is just
 interested in 
 Alexandrov spaces then one can just start with elements of ${\mathcal C}_0$
 consisting of one or two points.  For boundaryless
Alexandrov spaces, one can just start with elements of ${\mathcal C}_0$
consisting of two points.) Examples of elements of ${\mathcal C}_*$ come
from quotients of smooth closed Riemannian manifolds by compact groups
of isometries. Other examples come from compact stratified spaces with
iterated cone-edge Riemannian metrics.

 \begin{theorem} \label{thm1.16}
   (1) If $X \in {\mathcal C}_n$ then $\Ker(\triangle_*)$ is isomorphic to
   $\IH^{GM}_{n-*}(X; {\mathcal O})$,
   the Goresky-MacPherson intersection homology of
   $X$ as defined using the upper middle perversity. \\
(2) If $X \in {\mathcal C}_n$ then $(I + \triangle_*)^{-1}$ is compact.
 \end{theorem}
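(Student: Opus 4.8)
The plan is to prove both statements by induction on $n$, using the iterated cone structure of $\mathcal{C}_n$ for the inductive step, in the spirit of Cheeger's spectral analysis of metric cones. I would establish part (2) first and then deduce part (1) from it together with the inductive hypothesis.

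For part (2): since $X$ is compact it is covered by finitely many open sets $U_1,\dots,U_N$, each biLipschitz to a truncated open metric cone $C_1(Y_i)$ over some $Y_i\in\mathcal{C}_{n-1}$ with $\diam(Y_i)\le\pi$, and one can choose a Lipschitz partition of unity $\{\phi_i\}$ subordinate to this cover. Multiplication by a Lipschitz function preserves the domain of the closed extension defining $\triangle_*$, and the relevant commutators are bounded operators, so $(I+\triangle_*)^{-1}$ is compact as soon as the analogous relative resolvent is compact on each $C_1(Y_i)$ --- reducing (2) to a local statement on a cone. On $(0,1)\times Y$ one writes a form as $\alpha(r)+dr\wedge\beta(r)$ with $\alpha,\beta$ forms on $Y$, and, invoking the inductive hypothesis (2) for $Y$ --- which gives that $\triangle_Y$ has discrete spectrum with eigenvalues tending to $\infty$ and finite-dimensional kernel --- one diagonalizes $\triangle_*$ on the cone into a countable orthogonal sum of ordinary differential operators on $(0,1)$ of Bessel type, with potentials $c(k,\lambda)\,r^{-2}$ determined by the form degree $k$ and the link eigenvalue $\lambda$. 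Each of these, on the bounded interval $(0,1)$ with the boundary condition at $r=1$ induced by the ambient partition of unity and the self-adjoint extension at $r=0$ dictated by the global $\Omega^*/\overline{\Image(d)}$ construction, has compact resolvent; since the link eigenvalues diverge, the orthogonal sum does too, and (2) follows.

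For part (1): by part (2) and Hodge theory, $\Ker(\triangle_*)$ is isomorphic to the reduced $L^2$-cohomology of the complex $(\Omega^*/\overline{\Image(d)},d)$, i.e. to the reduced $L^2$-de Rham cohomology of $X$ with the relative (ideal) boundary behavior. I would organize this sheaf-theoretically: let $\mathcal{L}^\bullet$ be the complex of sheaves on $X$ whose sections over $U$ are the locally $L^2$ forms on the regular part of $U$ lying in the local domain of this closed extension. Lipschitz partitions of unity make each $\mathcal{L}^k$ fine, so the hypercohomology of $\mathcal{L}^\bullet$ is computed by its global sections, which give $\Ker(\triangle_*)$. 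It then suffices to identify $\mathcal{L}^\bullet$, up to quasi-isomorphism and the degree shift and orientation twist $\mathcal{O}$ coming from the Hodge-star pairing of $L^2$ $k$-forms with $L^2$ $(n-k)$-forms on the oriented regular part, with the Deligne complex $\IC^{\overline{m}}$ computing $\IH^{GM}_{n-\bullet}(X;\mathcal{O})$ for the upper middle perversity. By the Goresky-MacPherson axiomatic characterization this reduces to a stalk computation at each point, stratum by stratum along the iterated cone structure: the stalk cohomology at a point with cone neighborhood $C_1(Y)$ is, by the same separation of variables as above, $H^k_{(2)}(Y)$ below a cutoff degree (set by the boundary condition) and $0$ above; by the inductive hypothesis (1), $H^\bullet_{(2)}(Y)\cong\IH^{GM}_{(n-1)-\bullet}(Y;\mathcal{O})$, and one checks the cutoff is precisely the truncation degree prescribed by the upper middle perversity. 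Invariance of reduced $L^2$-cohomology under biLipschitz homeomorphisms (bounded distortion of the $L^2$ norm and of $d$) lets one replace the abstract biLipschitz cone neighborhoods by genuine metric cones for these computations. Uniqueness in the axiomatic characterization then yields $\Ker(\triangle_*)\cong\IH^{GM}_{n-\bullet}(X;\mathcal{O})$.

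The main obstacle is the boundary-condition bookkeeping at the cone points: one must verify that the closed extension implicitly defining $\triangle_*$ on all of $X$ --- constructed globally from $\Omega^*/\overline{\Image(d)}$ --- restricts near a singular point to exactly the self-adjoint extension of the radial Bessel operators that yields the upper, rather than lower, middle perversity truncation, and that this is compatible with the induction. The delicate case is an even-dimensional link $Y$ carrying $L^2$-harmonic forms in its middle degree, where the two middle perversities genuinely differ; one must check that passing to $\Omega^*/\overline{\Image(d)}$ removes precisely those middle-degree harmonic forms of the link that distinguish the two, pinning down the cutoff. A secondary technical point is justifying the term-by-term spectral analysis on the cone --- interchanging the sum over link eigenforms with the formation of closures, resolvents and cohomology --- which needs care with operator domains but no idea beyond the discreteness of $\spec(\triangle_Y)$ supplied by the induction.
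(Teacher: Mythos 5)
Your broad strategy (induction on $n$, sheaf-theoretic identification for the Hodge theorem, separation of variables on cones) is the right one, and several of the delicate points you flag --- the perversity cutoff at even-dimensional links, the biLipschitz invariance needed to replace abstract neighborhoods by metric cones, the care needed in interchanging the link-eigenform decomposition with resolvents --- are genuine issues that the paper also grapples with. However, the \emph{order} in which you propose to establish the two parts is the reverse of the paper's, and this inversion is not a matter of taste: your route to part (2) has a real gap.

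The paper proves part (1) first, entirely sheaf-theoretically, without any input from part (2): Proposition \ref{prop5.10} identifies the unreduced $L^2$-cohomology ${\mathcal H}^*_{L^2}(X)$ with $\IH^{GM}_{n-*}(X;{\mathcal O})$ by matching the sheaf $\Omega^*_{L^2_{loc},d}$ of Section \ref{sec3} against the intersection-chain sheaf via the axiomatic characterization. This yields, as corollaries, that $\dim\Ker(\triangle_p)<\infty$ and that $\Image(d)$ is closed. Only then does the paper prove part (2), and it does so by reducing compactness of $(I+\triangle_p)^{-1}$ to compactness of $d^{-1}$ on $\Image(d)$: with $\Image(d)$ closed, $d^{-1}$ is bounded by the open mapping theorem, $(d^*d)^{-1}=d^{-1}(d^{-1})^*$, and $d^{-1}$ is then shown compact by patching the local inverses $d_i^{-1}$ on cone neighborhoods with a partition of unity. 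The reason the paper goes through $d^{-1}$ rather than $\triangle_*$ directly is exactly the point where your argument breaks down.

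Your localization claim --- ``Multiplication by a Lipschitz function preserves the domain of the closed extension defining $\triangle_*$, and the relevant commutators are bounded operators, so $(I+\triangle_*)^{-1}$ is compact as soon as the analogous relative resolvent is compact on each $C_1(Y_i)$'' --- is false as stated. The commutator of a Lipschitz function $\phi$ with $\triangle_*=d^*d$ is first-order ($[\phi,\triangle_*]=[\phi,d^*]d+d^*[\phi,d]$, and the terms $d^*(d\phi\wedge\,\cdot\,)$ and $(-\iota_{\nabla\phi})d$ are unbounded), so the usual resolvent-commutator trick does not apply. If you instead try a Rellich-type argument at the level of the quadratic form, you run into a more fundamental obstacle: the Hilbert space is the quotient $\Omega^p_{L^2}(X)/\overline{\Image(d)}$, equivalently $(\overline{\Image(d)})^\perp=\Ker(\triangle_p)\oplus\Ker(d)^\perp$, and the membership constraint $\omega\perp\overline{\Image(d)}$ is a global, non-local condition that is not preserved by multiplication by a cutoff $\sigma_i$. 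Once you multiply by $\sigma_i$, the result can have components in $\Ker(d_i)$, and the form domain on $V_i$ does \emph{not} embed compactly in $L^2(V_i)$ (it contains all of $\Ker(d_i)$, an infinite-dimensional space on which $d$ vanishes). So the naive patching of local compactness statements fails. The operator $d$, by contrast, \emph{is} local and has bounded commutator $[\sigma_i,d]=-d\sigma_i\wedge\,\cdot\,$, and the approximate-inverse construction works precisely for $d^{-1}$; but to even speak of $d^{-1}$ as a bounded operator on $\Image(d)$ you need $\Image(d)$ closed, and to reduce compactness of $(I+\triangle_*)^{-1}$ to compactness of $d^{-1}$ you need the harmonic space finite-dimensional --- both of which are consequences of part (1). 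This is why the paper's ordering is forced.

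One further point on part (1): you invoke ``the Goresky--MacPherson axiomatic characterization.'' Since ${\mathcal C}_0$ can contain one-point spaces, the class ${\mathcal C}_n$ allows codimension-one strata (the spaces are \emph{cs}-spaces rather than pseudomanifolds), and the Goresky--MacPherson uniqueness theorem does not apply directly. The paper uses the Habegger--Saper extension of the axiomatic characterization to cs-spaces, together with a compatibility check on the codimension-$\le 1$ skeleton. Your sketch should be amended accordingly. Finally, the cutoff degree in the cone computation is resolved in the paper by the explicit formulas (\ref{5.13})--(\ref{5.14}): $\HH^i_{L^2,d}(CY)$ vanishes for $i\ge k/2$, which for odd $k$ (even-dimensional link) gives nonvanishing only for $i\le\frac{k-1}{2}=\overline{p}(k)$, pinning down the \emph{upper} middle perversity --- so your worry about the two middle perversities is well-founded, and the resolution is exactly the direction you anticipated.
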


 Here ${\mathcal O}$ is the orientation line bundle of the codimension-zero
 stratum of $X$.
 The upper middle perversity is the function $\overline{p} : \Z^{\ge 0}
 \rightarrow \Z$ given by $\overline{p}(0) = 0$ and $\overline{p}(j) =
 \left[ \frac{j-1}{2} \right]$ for $j \ge 1$. If $X$ is a conically
 stratified
 pseudomanifold (i.e. has no codimension-one strata) then it is
 well known that the $L^2$-cohomology of $X$ is related to the intersection
 (co)homology of $X$ with middle perversity.  If $X$ is allowed to have
 codimension-one strata, as in our case, then there are various notions
 of intersection (co)homology
 \cite{Friedman (2011)}. It is not immediately clear which one is the
 right one to describe $\Ker(\triangle_*)$.
 It turns out that the right
 one is the original Goresky-MacPherson intersection homology, extended
 to spaces with codimension-one strata, after an appropriate change of degree.

\begin{remark} \label{rem1.17}
 A finite dimensional Alexandrov space is locally Lipschitz contractible
 \cite{Mitsuishi-Yamaguchi (2014)}. Unfortunately, this does not help in
 proving Theorem \ref{thm1.16} for Alexandrov spaces that are not
 {\it a priori} in ${\mathcal C}_*$, due to boundedness issues.
\end{remark}

To summarize, we construct self-adjoint differential form Laplacians
for a class of compact metric spaces, that includes compact Alexandrov spaces.
For a more restricted class of compact metric spaces, that conjecturally
includes compact Alexandrov spaces, we show that the differential
form Laplacian has a compact resolvent.

The structure of the paper is as follows.  In Section \ref{sec2} we
construct the differential form Laplacian $\triangle_*$ on a class of
compact metric spaces.  Section \ref{sec3}
has the construction of a sheaf of certain locally-$L^2$ differential forms.
The eigenvalue bounds of Theorems
\ref{thm1.7} and \ref{thm1.3} are proven in Section \ref{sec4}.
In Section \ref{sec5} we consider the Lipschitz multiconical spaces
${\mathcal C}_*$ and prove Theorem \ref{thm1.16}.

I thank Vitali Kapovitch for consultations on Alexandrov spaces, and
Greg Friedman for consultations on intersection homology.
I thank Bruno Colbois for a correction to an earlier version
of the paper, and the referee for helpful remarks.

\section{Differential form Laplacian on an Alexandrov space} \label{sec2}

In this section we define differential form Laplacians on a class of
metric spaces that includes Alexandrov spaces.  
In Subsection \ref{subsec2.1} we consider a certain class of
test forms built out of Lipschitz functions.  Using them, in
Subsection \ref{subsec2.2} we define a complex of
$L^2$-forms.  Subsection \ref{subsec2.3} has the construction
of the differential form Laplacian.

For background material on Alexandrov spaces,
we refer to \cite[Chapter 10]{Burago-Burago-Ivanov (2001)}.

Let $(X, d_X)$ be a compact metric space with Hausdorff dimension $n$ and
finite $n$-dimensional Hausdorff mass.
If $X$ is disconnected then we assume
that the distance between points in distinct connected components is
infinity.  Suppose that there is an open subset
$X^* \subset X$, with full Hausdorff $n$-measure,
having the structure of an $n$-dimensional Riemannian
Lipschitz manifold. This means that $X^*$ has a manifold structure
with locally Lipschitz transition maps, and that it is equipped
with a Riemannian metric $g$ so that
in coordinate charts, $g$ and $g^{-1}$ are in 
$L^\infty_{loc}$. In addition, $d_X$ is compatible with the
metric $d_{X^*}$ on $X^*$ coming from $g$ \cite{DeCecco-Palmieri (1991)}, in the sense that
$d_X$ and $d_{X^*}$ coincide on some neighborhood of the diagonal in
$X^* \times X^*$.  In particular, if $F$ is a function with
  compact support in a coordinate neighborhood of $X^*$, and $F$ is Lipschitz
  in terms of the coordinates, then $F$ is a Lipschitz function on $X$.

\begin{example} \label{ex2.1}
Let $X$ be a compact Alexandrov space with curvature bounded below,
of Hausdorff dimension $n$. There is some $\delta_0 > 0$ with the
following property. Given $\delta \in
(0, \delta_0)$,
let $X^*_\delta$ be the set of points $x \in X$
such that the space of directions $\Sigma_x$ has
$(n-1)$-dimensional Hausdorff mass more than
$(1 - \delta)$ times that of $S^{n-1}$. 
Then
$X^*_\delta$ is an open convex subset of $X$ of full Hausdorff measure,
with the structure of a Riemannian Lipschitz
manifold \cite{Perelman}. In fact, there is a stronger DC-structure,
but this doesn't seem to matter for the considerations of this paper.
\end{example}

\subsection{Test forms} \label{subsec2.1}

For a smooth compact Riemannian manifold, 
we can define an operator $d$, on a dense subset of 
$L^2$-forms, by saying that $\omega \in \Dom(d)$ if the distributional
differential $d \omega$ is $L^2$. Here
the notion of distributional differential
uses smooth test forms. On our space $X$, it doesn't make
sense to talk about smooth forms. We will instead use
``test forms'' made from Lipschitz functions.
(Not to be confused with the test forms mentioned in the introduction.)
Let $\Omega^*_{\Lip}(X)$ be the graded-commutative
differential graded algebra generated by
$\{f_0 df_1 \ldots df_k\}$, 
where $f_i \in \Lip(X)$. In particular, an element of
$\Omega^k_{\Lip}(X)$ is a finite sum of expressions
$f_0 df_1 \ldots df_k$, and
$d(f_0 df_1 \ldots df_k) = 1 \cdot df_0
df_1 \ldots df_k$.
There is a relation $d(fg) = fdg + gdf$ for $f,g \in \Lip(X)$.
The elements of  $\Omega^*_{\Lip}(X)$ are also known as the
K\"ahler forms of the algebra $\Lip(X)$.
There is a homomorphism $\rho$ from 
$\Omega^*_{\Lip}(X)$ to
the locally-$L^\infty$ differential forms on 
$X^*$. This homomorphism need not be injective or surjective.

The test forms, or more precisely their image under $\rho$,
will actually be twisted by the flat orientation line bundle
${\mathcal O}$ of $X^*$. The fiber of
${\mathcal O}$ over $x \in X^*$ is $\HH^n(X^*, X^*-x; \R)$.
If $X^*$ is orientable then with a given orientation $c$,
the homomorphism $\rho_c$ to the ${\mathcal O}$-valued
differential forms on $X^*$ can be identified with the $\rho$ of before.
If $c^\prime$ is a different orientation then in the applications,
$\rho_{c^\prime}(\omega^\prime)$
will be equivalent to the result of changing $\rho_c(\omega^\prime)$ by a sign on the
components of $X^*$ where $c^\prime$ differs from $c$.
We write $\Omega^*_{\Lip}(X; {\mathcal O})$ for the elements of
$\Omega^*_{\Lip}(X)$ when we consider them to be twisted by ${\mathcal O}$.

If $X^*$ is not orientable then we only consider the case when
$X$ is a boundaryless Alexandrov space.
There is a notion of an orientation cover $\widehat{X}$ of $X$
\cite{Harvey-Searle (2016)}. It is also an Alexandrov space and is
equipped with
a $\Z_2$-action whose quotient is $X$.
Choose an orientation
on the connected space $\widehat{X}^*$.
Then $\Omega^*_{\Lip}(X; {\mathcal O}) \cong
\Omega^*_{\Lip}(\widehat{X}) \otimes_{\Z_2} \R$, where $\R$ has the
nontrivial representation of $\Z_2$.
(The papers 
\cite{Harvey-Searle (2016),Mitsuishi (2016)} discuss various
equivalent notions of orientability for Alexandrov spaces.)

In the rest of the paper we will only discuss the case when
$X^*$ is oriented, as the nonorientable case can be handled
by working $\Z_2$-equivariantly on $\widehat{X}$.

\begin{lemma} \label{newlemma}
  If $\omega^\prime \in \Omega^{n-1}_{\Lip}(X; {\mathcal O})$ is such that
  $\rho(\omega^\prime)$  and $\rho(d\omega^\prime)$ have compact support
  in $X^*$ then $\int_{X^*} \rho(d\omega^\prime) = 0$.  
\end{lemma}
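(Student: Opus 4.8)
The plan is to reduce the statement to the usual Stokes theorem on an open oriented Riemannian manifold. Write $K_1 = \supp \rho(\omega')$ and $K_2 = \supp \rho(d\omega')$, both compact subsets of $X^*$, and set $K = K_1 \cup K_2$. First I would cover $K$ by finitely many coordinate charts of the Lipschitz manifold structure on $X^*$ and choose a smooth (or merely Lipschitz) partition of unity $\{\chi_\alpha\}$ subordinate to this cover, plus one more function supported away from $K$; since $K$ is compact this is possible. Then $\rho(\omega') = \sum_\alpha \chi_\alpha \rho(\omega')$, and because $\rho$ is a homomorphism of differential graded algebras commuting with $d$ on the image side, the identity $d(\chi_\alpha \rho(\omega')) = d\chi_\alpha \wedge \rho(\omega') + \chi_\alpha \rho(d\omega')$ holds a.e. on $X^*$. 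Summing over $\alpha$ and using $\sum_\alpha d\chi_\alpha = 0$ on a neighborhood of $K$, one gets $\sum_\alpha d(\chi_\alpha \rho(\omega')) = \rho(d\omega')$ a.e., so it suffices to show $\int_{X^*} d(\chi_\alpha \rho(\omega')) = 0$ for each $\alpha$.

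For a fixed $\alpha$, the form $\chi_\alpha \rho(\omega')$ is an ${\mathcal O}$-valued $(n-1)$-form with compact support inside a single coordinate chart $U_\alpha \cong V_\alpha \subset \R^n$. In that chart ${\mathcal O}$ is trivialized by the coordinate orientation, so we are looking at an ordinary compactly supported $(n-1)$-form $\eta_\alpha$ on an open set $V_\alpha \subset \R^n$, and we must check $\int_{V_\alpha} d\eta_\alpha = 0$. The regularity point is that $\eta_\alpha$ has $L^\infty_{loc}$ coefficients (since $\rho$ lands in locally-$L^\infty$ forms and $\chi_\alpha$ is Lipschitz) while $d\eta_\alpha$ is also $L^\infty_{loc}$ (it is a summand of $\rho(d\omega')$ plus $d\chi_\alpha \wedge \rho(\omega')$, both locally bounded), where $d$ is taken distributionally. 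Compactly supported $L^\infty$ forms with $L^\infty$ exterior derivative satisfy Stokes' theorem: one mollifies $\eta_\alpha$ to smooth forms $\eta_\alpha^\varepsilon$ with $\eta_\alpha^\varepsilon \to \eta_\alpha$ and $d\eta_\alpha^\varepsilon \to d\eta_\alpha$ in $L^1$ (supports staying in a fixed compact set), applies the classical $\int d\eta_\alpha^\varepsilon = 0$, and passes to the limit.

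I expect the only genuine subtlety — the "main obstacle" — to be the bookkeeping around the twisting by ${\mathcal O}$ and the compatibility of orientations between overlapping charts: one must be sure that the local trivializations of ${\mathcal O}$ can be chosen consistently with a global orientation of $X^*$ (which we have assumed $X^*$ to be), so that the chartwise integrals $\int_{V_\alpha} d\eta_\alpha$ genuinely add up to the globally defined $\int_{X^*}\rho(d\omega')$ with no sign ambiguity. Once orientations are fixed globally, this is routine. A secondary, purely technical point is justifying the distributional Stokes theorem for $L^\infty$ forms with $L^\infty$ differential on a Lipschitz chart — but the mollification argument above handles it, and this is exactly the kind of statement for which the hypothesis "$g$ and $g^{-1}$ in $L^\infty_{loc}$" together with "$\rho$ lands in locally-$L^\infty$ forms" was arranged. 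No Ricci or Alexandrov geometry is needed here; the lemma is a soft statement about Lipschitz manifolds with bounded measurable Riemannian metrics.
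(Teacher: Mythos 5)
Your proof follows the same overall strategy as the paper -- localize with a Lipschitz partition of unity, reduce to a single coordinate chart, then mollify -- but it has a genuine gap at precisely the step you flag as a "secondary, purely technical point."

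The gap is in the assertion that the distributional exterior derivative of $\eta_\alpha = \rho(\chi_\alpha\omega')$ equals $d\chi_\alpha\wedge\rho(\omega') + \chi_\alpha\rho(d\omega') = \rho(d(\chi_\alpha\omega'))$ a.e. You justify this by saying "$\rho$ is a homomorphism of differential graded algebras commuting with $d$," but the target of $\rho$ is just locally-$L^\infty$ forms, which carry no canonical differential; the paper never asserts $\rho$ to be a chain map, and indeed the fact that $\rho(d\omega')$ is even determined by $\rho(\omega')$ is Lemma~\ref{unique}, proved \emph{using} the present lemma. Concretely, in a chart write $\eta_\alpha = \psi\, dg_1\wedge\cdots\wedge dg_{n-1}$ with $\psi$ compactly supported Lipschitz and $g_i$ Lipschitz (the $dg_i$ being a.e.\ Rademacher differentials). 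The coefficients of $\eta_\alpha$ are products $\psi\cdot(\text{minors of }[\partial g_1;\ldots;\partial g_{n-1}])$, each merely $L^\infty$, so the distributional $d\eta_\alpha$ is a priori only a distribution of order one. To identify it with the $L^\infty$ form $d\psi\wedge dg_1\wedge\cdots\wedge dg_{n-1}$ you need the Piola-type identity that the minor $(n{-}1)$-vector of a Lipschitz gradient is distributionally divergence-free, together with an approximation argument to test against the Lipschitz (not smooth) function $\psi\phi$; that identity is itself proved by mollification, so you have deferred rather than avoided the hard step. Once the chain-map property is unjustified, your mollification of $\eta_\alpha$ only proves that the compactly supported distribution $d\eta_\alpha$ pairs to zero against the constant function $1$; it does not by itself yield $\int_{X^*}\rho(d(\chi_\alpha\omega')) = 0$.

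The paper avoids this issue with a second reduction that you skip: after localizing $f_0^j$ to $\phi_i f_0^j$, it also replaces $f_1^j,\ldots,f_{n-1}^j$ by $\eta_i f_1^j,\ldots,\eta_i f_{n-1}^j$ for a cutoff $\eta_i$ that is $\equiv 1$ on $\supp\phi_i$ (an elementary a.e.\ observation), so that \emph{all} the generating functions become compactly supported Lipschitz in a single chart. One can then mollify each function, apply the classical Stokes theorem to the resulting smooth compactly supported forms, and pass to the limit in $L^1$ -- at no point relying on the chain-map claim. This is why your proof is not a mere rephrasing: mollifying the form $\eta_\alpha$ wholesale is a different route, and it needs the compatibility of $\rho$ with the distributional $d$ as an additional input. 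As a side remark, your identification of the orientation bookkeeping as the "main obstacle" is off the mark; the paper has already set things up with a fixed orientation on $X^*$, and that part is routine.
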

\begin{proof}
  Put $K = \supp(\rho(\omega^\prime)) \cup
  \supp(\rho(d\omega^\prime))$.
  Let $\{U_i\}_{i=1}^N$ be relatively compact coordinate neighborhoods of $X^*$ that cover
  K. Let $\{\phi_i\}_{i=1}^N$ be nonnegative subordinate
  Lipschitz functions whose sum is one on $K$.
  Write $\omega^\prime$ as a finite sum $\sum_j f_0^j df_1^j \ldots df_{n-1}^j$.
  Then
  \begin{equation}
    \rho(\omega^\prime) = \sum_{i=1}^N \phi_i \rho(\omega^\prime) =
\sum_{i=1}^N \sum_j \rho( \phi_i f_0^j df_1^j \ldots df_{n-1}^j)
  \end{equation}
  and
      \begin{equation}
        \rho(d\omega^\prime) = \sum_{i=1}^N (d\phi_i \wedge \rho(\omega^\prime) +
        \phi_i \rho(d\omega^\prime)) =
\sum_{i=1}^N \sum_j \rho( d(\phi_i f_0^j) df_1^j \ldots df_{n-1}^j).
      \end{equation}
      Hence it suffices to prove the lemma with $f_0^j$ replaced by $\phi_i f_0^j$,
      for some fixed $i$.
    Choose a Lipschitz function $\eta_i$ with compact support in $U_i$ so that
    $\eta_i \phi_i = \phi_i$, i.e. $\eta_i$ is one on $\supp(\phi_i)$. Then
    \begin{align}
      & d(\phi_i f_0^j) d(\eta_i f_1^j) \ldots d(\eta_i f_{n-1}^j) = \\
      & ((d\phi_i) f_0^j + \phi_i df_0^j) \cdot
      ((d\eta_i) f_1^j + \eta_i df_1^j) \ldots ((d\eta_i) f_{n-1}^j + \eta_i df_{n-1}^j) \notag
    \end{align}
    and
    \begin{align}
      & \rho \left( d(\phi_i f_0^j) d(\eta_i f_1^j) \ldots d(\eta_i f_{n-1}^j) \right) = \\
      & ((d\phi_i) \rho(f_0^j) + \phi_i \rho(df_0^j)) \wedge
      ((d\eta_i) \rho(f_1^j) + \eta_i \rho(df_1^j)) \wedge \ldots \wedge
      (d\eta_i \wedge \rho(f_{n-1}^j) + \eta_i \rho(df_{n-1}^j)) = \notag \\ 
      & ((d\phi_i) \rho(f_0^j) + \phi_i \rho(df_0^j)) \wedge
       \rho(df_1^j) \wedge \ldots \wedge
       \rho(df_{n-1}^j) = \rho(d(\phi_i f_0^j) df_1^j \ldots df_{n-1}^j)). \notag 
    \end{align}
    Hence we can reduce the lemma to the case when each of 
    $f_0^j, f_1^j, \ldots, f_{n-1}^j$ has compact support in $U_i$.
    Using Euclidean coordinates
    on $U_i$, we can mollify the functions by
    convolution and take the mollification parameter to zero, to reduce to the case when 
    $f_0^j, f_1^j, \ldots, f_{n-1}^j$ are smooth functions of
the coordinates, in which case the lemma is evident.
\end{proof}

It is not immediately clear that $\rho(d\omega^\prime)$ is
determined by $\rho(\omega^\prime)$, but this turns out to be the case.
\begin{lemma} \label{unique}
  Given $\omega^\prime_1, \omega^\prime_2 \in \Omega^p_{\Lip}(X)$, if
  $\rho(\omega^\prime_1) = \rho(\omega^\prime_2)$ then 
$\rho(d\omega^\prime_1) = \rho(d\omega^\prime_2)$
\end{lemma}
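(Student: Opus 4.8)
The plan is to reduce to the case where $\omega'_1, \omega'_2$ are supported in a single coordinate chart of $X^*$, and then to reduce further to the case of smooth forms, where the statement is classical. Set $\omega' = \omega'_1 - \omega'_2 \in \Omega^p_{\Lip}(X)$; by linearity of $\rho$ and of $d$, it suffices to show that $\rho(\omega') = 0$ implies $\rho(d\omega') = 0$. The first step is a \emph{localization argument on $X^*$}. Fix a point $q \in X^*$ and a relatively compact coordinate neighborhood $U \ni q$, and choose Lipschitz cutoff functions $\phi, \eta$ with compact support in $U$ such that $\eta \equiv 1$ on $\supp(\phi)$ and $\phi \equiv 1$ near $q$. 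Writing $\omega' = \sum_j f_0^j df_1^j \cdots df_p^j$, the algebraic manipulation already carried out in the proof of Lemma \ref{newlemma} shows that
\begin{equation}
  \rho\!\left( d(\phi f_0^j)\, d(\eta f_1^j) \cdots d(\eta f_p^j) \right)
  = \rho\!\left( d(\phi f_0^j)\, df_1^j \cdots df_p^j \right)
\end{equation}
on all of $X^*$, and the analogous identity holds before applying $d$. Hence, multiplying $\omega'$ by $\phi$ in the appropriate sense produces a new Lipschitz form $\widetilde{\omega}$, all of whose generating functions have compact support in $U$, with $\rho(\widetilde{\omega}) = \phi \cdot \rho(\omega')$ and $\rho(d\widetilde{\omega})$ equal to $\rho(d\omega')$ multiplied by the appropriate combination of $\phi$ and $d\phi$ — concretely, $\rho(d\widetilde{\omega}) = \phi\,\rho(d\omega') + d\phi \wedge \rho(\omega')$.

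Now suppose $\rho(\omega') = 0$. Then from the displayed relation, $\rho(d\widetilde{\omega}) = 0$ near $q$ would follow once we know $\rho(d\widetilde{\omega}) = 0$, since $d\phi \wedge \rho(\omega') = 0$ and $\phi \equiv 1$ near $q$; so it suffices to prove $\rho(d\widetilde{\omega}) = 0$ for a form $\widetilde{\omega}$ whose generating functions $g_0, \ldots, g_p$ are Lipschitz with compact support in the Euclidean chart $U$, and which satisfies $\rho(\widetilde{\omega}) = 0$ as an $L^\infty$ form on $U$. The second step is \emph{mollification}: convolve each $g_i$ with a smooth approximate identity to obtain smooth compactly supported $g_i^{(\varepsilon)}$, and form $\widetilde{\omega}^{(\varepsilon)} = \sum g_0^{(\varepsilon)} dg_1^{(\varepsilon)} \cdots dg_p^{(\varepsilon)}$. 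For smooth forms, $\rho$ is literally the identity on the coordinate expression, so $d\widetilde{\omega}^{(\varepsilon)}$ is computed by the usual exterior derivative and the implication is trivially true at each fixed $\varepsilon$. One then lets $\varepsilon \to 0$: $\rho(\widetilde{\omega}^{(\varepsilon)}) \to \rho(\widetilde{\omega}) = 0$ and $\rho(d\widetilde{\omega}^{(\varepsilon)}) \to \rho(d\widetilde{\omega})$ in, say, $L^1_{loc}(U)$, using that the coefficients of $\widetilde{\omega}$ and $d\widetilde{\omega}$ are products of Lipschitz functions and their a.e.-defined gradients, which converge appropriately under mollification. This forces $\rho(d\widetilde{\omega}) = 0$ on $U$, hence near $q$, and since $q \in X^*$ was arbitrary, $\rho(d\omega') = 0$ on $X^*$.

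The step I expect to be the main obstacle is making the mollification limit rigorous, because $\widetilde{\omega}$ is quadratic (indeed of degree $p+1$) in the functions and their derivatives, so one is dealing with products of $L^\infty$ functions that converge only weakly-$*$ together with gradients that converge only in $L^q_{loc}$ for finite $q$ — and products do not pass to weak limits in general. The resolution is to exploit that Lipschitz functions mollify nicely: $g_i^{(\varepsilon)} \to g_i$ uniformly with uniformly bounded Lipschitz constants, and $\nabla g_i^{(\varepsilon)} \to \nabla g_i$ in $L^q_{loc}$ for every finite $q$ (even though not in $L^\infty$). Each wedge term is a product of one uniformly bounded, uniformly convergent factor ($g_0^{(\varepsilon)}$, and the various $dg_i^{(\varepsilon)}$ are bounded in $L^\infty$) with gradients converging strongly in $L^q_{loc}$; one then checks convergence of the products in $L^1_{loc}$ by a standard telescoping argument, replacing one factor at a time and using strong convergence of that factor against the uniform bounds on the rest. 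This is routine once set up, but it is the only place where the $L^\infty_{loc}$ (rather than smooth) nature of the metric and of $\rho$ needs care; note we never differentiate the metric, so the argument does not see the weak regularity of $g$ at all.
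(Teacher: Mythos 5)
Your proposal is correct, but it takes a genuinely different route from the paper's. The paper invokes Lemma \ref{newlemma} as a black box and proceeds by duality: for any test form $\omega \in \Omega^{n-p-1}_{\Lip}(X; {\mathcal O})$ with compactly supported image, applying Lemma \ref{newlemma} to $\omega^\prime \wedge \omega$ and using the DGA-homomorphism property of $\rho$ together with $\rho(\omega^\prime)=0$ gives $\int_{X^*} \rho(d\omega^\prime) \wedge \rho(\omega) = 0$; then varying $\omega$ over forms of the shape $F\,d(\phi x^{i_1})\cdots d(\phi x^{i_{n-p-1}})$ picks off each component of $\rho(d\omega^\prime)$ and forces it to vanish a.e. You instead bypass the pairing argument entirely: you re-run the cutoff trick from the proof of Lemma \ref{newlemma} to localize $\omega^\prime$ to a chart, then mollify the generating functions and use that the distributional exterior derivative is closed under $L^1_{loc}$ limits, i.e.\ if $\rho(\widetilde{\omega}^{(\varepsilon)}) \to 0$ and $\rho(d\widetilde{\omega}^{(\varepsilon)}) = d\rho(\widetilde{\omega}^{(\varepsilon)}) \to \rho(d\widetilde{\omega})$ in $L^1_{loc}$, then $\rho(d\widetilde{\omega})$ must be the distributional derivative of $0$, hence zero. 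Both proofs ultimately rest on the same localization-plus-mollification technology, but the paper packages it once (in Lemma \ref{newlemma}) and deduces Lemma \ref{unique} cheaply from there, while you unfold part of that machinery again; in exchange, your argument makes visible the somewhat stronger fact that $\rho(d\omega^\prime)$ is precisely the distributional exterior derivative of $\rho(\omega^\prime)$. Two small remarks: there is a typo in your second step (``$\rho(d\widetilde{\omega})=0$ near $q$ would follow once we know $\rho(d\widetilde{\omega})=0$'' should read ``$\rho(d\omega^\prime)=0$ near $q$''), and the final passage to the limit implicitly uses Stokes' theorem against smooth compactly supported test forms to identify the limit as a distributional derivative — worth stating explicitly, but it is the standard closedness of $d$ and poses no difficulty.
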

\begin{proof}
  It is equivalent to show that if $\rho(\omega^\prime) = 0$ then $\rho(d\omega^\prime) = 0$.
  Suppose that $\rho(\omega^\prime) = 0$.
  For any $\omega \in \Omega^{n-p-1}_{\Lip}(X; {\mathcal O})$ such that $\rho(\omega)$ and
  $\rho(d\omega)$ have compact support in
  $X^*$, Lemma \ref{newlemma} implies that
  \begin{equation}
    0 = \int_{X^*} \rho(d(\omega^\prime \wedge \omega)) =
    \int_{X^*} \rho(d\omega^\prime) \wedge \rho(\omega).
        \end{equation}
  Let $U$ be a relatively compact coordinate neighborhood for $X^*$.
  Let $F$ be a Lipschitz
  function with support in $U$. Let $\phi$ be a Lipschitz
  function with support in $U$ that is identically one on
  $\supp(F)$. Put $\omega = F d(\phi x^{i_1}) \ldots d(\phi x^{i_p})$.
  Then $\rho(\omega) =
  F dx^{i_1} \wedge \ldots \wedge dx^{i_p}$.
Letting $\omega$ vary over such choices, the lemma follows.
\end{proof}

\begin{lemma}
  If $\omega^\prime \in \Omega^p_{\Lip}(X)$ then $\supp(\rho(d\omega^\prime)) \subset
  \supp(\rho(\omega^\prime))$.
\end{lemma}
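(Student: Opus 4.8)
The plan is to localize $\omega'$ by a Lipschitz cutoff and then quote Lemma~\ref{unique}. Since $\supp(\rho(\omega'))$ and $\supp(\rho(d\omega'))$ are subsets of $X^*$, it suffices to fix a point $x \in X^* \setminus \supp(\rho(\omega'))$ and show $x \notin \supp(\rho(d\omega'))$. By definition there is an open set $V$ with $x \in V \subset X^*$ on which $\rho(\omega')$ vanishes; if $V = X$ then $\rho(\omega') \equiv 0$ and there is nothing to prove, so we may assume $X \setminus V \neq \emptyset$. Choose $r > 0$ with $\overline{B(x,r)} \subset V$ and set $V' = B(x,r)$. Since $X$ is compact, $\overline{V'}$ and $X \setminus V$ are disjoint compact sets, so
\[
  \psi(y) = \frac{d_X(y, \overline{V'})}{d_X(y, \overline{V'}) + d_X(y, X \setminus V)}
\]
is a well-defined Lipschitz function on $X$ (the denominator is bounded below by $d_X(\overline{V'}, X \setminus V) > 0$), with $\psi \equiv 0$ on $\overline{V'}$ and $\psi \equiv 1$ on $X \setminus V$.

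Multiplication by $\psi \in \Omega^0_{\Lip}(X)$ is a well-defined operation on $\Omega^*_{\Lip}(X)$, so $\psi \omega' \in \Omega^p_{\Lip}(X)$. As $\rho$ is an algebra homomorphism, $\rho(\psi \omega') = \psi \, \rho(\omega')$; since $\psi \equiv 1$ off $V$ and $\rho(\omega')$ vanishes on $V$, this equals $\rho(\omega')$ on all of $X^*$. Lemma~\ref{unique} then gives $\rho(d(\psi \omega')) = \rho(d\omega')$ on $X^*$.

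Finally, picking a presentation $\omega' = \sum_j f_0^j \, df_1^j \cdots df_p^j$ one has $\psi \omega' = \sum_j (\psi f_0^j) \, df_1^j \cdots df_p^j$, hence
\[
  \rho(d(\psi \omega')) = \sum_j \rho\big( d(\psi f_0^j) \big) \wedge \rho(df_1^j) \wedge \cdots \wedge \rho(df_p^j).
\]
Each $\psi f_0^j$ is a Lipschitz function on $X$ that vanishes on the open set $V'$, so its distributional differential, and therefore $\rho(d(\psi f_0^j))$, vanishes on $V'$; consequently $\rho(d(\psi \omega'))$, and with it $\rho(d\omega')$, vanishes on $V'$. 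Thus $x \notin \supp(\rho(d\omega'))$, which is the claimed inclusion. I do not expect a serious obstacle here: the two points that need a little care are that $\psi$ is genuinely Lipschitz on all of $X$ (which rests on compactness of $X$ to keep the denominator uniformly bounded away from zero) and that the cutoff construction is insensitive to the chosen presentation of $\omega'$ — both routine. The essential input is Lemma~\ref{unique}, which lets us replace $\omega'$ by the manifestly locally-trivial representative $\psi \omega'$.
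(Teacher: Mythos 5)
Your proof is correct, and it takes a genuinely different route from the paper. The paper argues by contradiction: if $\supp(\rho(d\omega'))$ were not contained in $\supp(\rho(\omega'))$, it pairs $\rho(d\omega')$ against test forms $\omega$ compactly supported in a coordinate neighborhood of $X^* - \supp(\rho(\omega'))$ and uses Lemma~\ref{newlemma} to write $0 = \int_{X^*}\rho(d(\omega'\wedge\omega)) = \int_{X^*}\rho(d\omega')\wedge\rho(\omega)$, then varies $\omega$ to reach a contradiction. Your argument instead directly localizes $\omega'$ by a globally Lipschitz cutoff $\psi$ and invokes Lemma~\ref{unique} to replace $\omega'$ by $\psi\omega'$ without changing $\rho(d\omega')$; the representative $\psi\omega'$ then manifestly has $\rho(d(\psi\omega'))$ vanishing near $x$, because every summand in the presentation carries a factor $\rho(d(\psi f_0^j))$ coming from a Lipschitz function that is identically zero on $V'$, hence has a.e.\ vanishing differential there. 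Both proofs ultimately rest on the integration-by-parts input of Lemma~\ref{newlemma} (yours indirectly, through Lemma~\ref{unique}), but yours avoids constructing fresh test forms and reduces the support statement to a purely algebraic/pointwise observation; the paper's proof is perhaps more parallel in structure to the proof of Lemma~\ref{unique} itself. One tiny imprecision: in the edge case $V = X$ you say ``there is nothing to prove,'' but one still needs $\rho(d\omega') = 0$; this too is immediate from Lemma~\ref{unique} applied with $\omega_1' = \omega'$, $\omega_2' = 0$, so the gap is cosmetic rather than substantive.
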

\begin{proof}
  Suppose that $\supp(\rho(d\omega^\prime))$ is not contained in
  $\supp(\rho(\omega^\prime))$. Let
  $\omega \in \Omega^{n-p-1}(X; {\mathcal O})$ be such that
  $\rho(\omega)$ and $\rho(d\omega)$ have support in a relatively compact coordinate neighborhood
  of $X^* - \supp(\rho(\omega^\prime))$; such $\omega$ can be constructed as in the proof of
  Lemma \ref{unique}. Then
  \begin{align}
    0 = & \int_{X^*} \rho(d(\omega^\prime \wedge \omega)) =
    \int_{X^*} \left( \rho(d\omega^\prime) \wedge \rho(\omega)
    + (-1)^p \rho(\omega^\prime) \wedge \rho(d\omega) \right) \\
    = &
    \int_{X^*} \rho(d\omega^\prime) \wedge \rho(\omega). \notag
  \end{align}
  Letting $\omega$ vary over such choices gives a contradiction.
\end{proof}

For brevity, we will write
$\omega^\prime$ for $\rho(\omega^\prime)$ on $X^*$, and
$d\omega^\prime$ for $\rho(d\omega^\prime)$ on $X^*$. In what follows,
this should not cause confusion.

\subsection{$L^2$-complex} \label{subsec2.2}

Let $\Omega^*_{L^2}(X)$ be the $L^2$-differential forms on $X^*$.
There is a well-defined integration
$\int_{X*} : \Omega^n_{L^2}
\left( X; {\mathcal O} \right) \rightarrow \R$.

The map $\rho$ sends
$\Omega^*_{\Lip}(X)$ to $\Omega^*_{L^2}(X)$.

\begin{lemma} \label{dense}
  The image of $\rho$ is dense in $\Omega^*_{L^2}(X)$.
\end{lemma}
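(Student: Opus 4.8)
The plan is to show that the $L^2$-orthogonal complement of $\operatorname{im}(\rho)$ in $\Omega^p_{L^2}(X)$ is trivial, by exploiting the local structure of $X^*$ as a Riemannian Lipschitz manifold together with the fact (already used in Lemma \ref{unique}) that coordinate monomials like $F\,d(\phi x^{i_1})\cdots d(\phi x^{i_p})$ lie in $\Omega^p_{\operatorname{Lip}}(X)$ and have $\rho$-image $F\,dx^{i_1}\wedge\cdots\wedge dx^{i_p}$ for Lipschitz $F$ supported in a coordinate chart $U$. First I would fix a locally finite cover of $X^*$ by relatively compact coordinate neighborhoods $\{U_\alpha\}$ with a subordinate Lipschitz partition of unity $\{\psi_\alpha\}$; since $X^*$ has full Hausdorff $n$-measure in $X$, an $L^2$-form on $X^*$ is the same thing as an $L^2$-form on $X$, and it suffices to approximate $\psi_\alpha \eta$ for $\eta \in \Omega^p_{L^2}(X)$ and then sum. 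So I reduce to the case of a single chart $U = U_\alpha$, i.e. approximating an $L^2$ $p$-form $\sum_I h_I\, dx^{i_1}\wedge\cdots\wedge dx^{i_p}$ compactly supported in $U$, where each $h_I \in L^2$ of the Euclidean chart.

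Next, within the chart, I would approximate each coefficient $h_I \in L^2_c(U)$ in the $L^2$-norm by a Lipschitz (indeed smooth) function $F_I$ with compact support in $U$ — standard mollification in Euclidean coordinates. Here one uses that the metric $g$ and $g^{-1}$ are in $L^\infty_{\operatorname{loc}}$, so the coordinate $L^2$-norm and the metric $L^2$-norm are comparable on $U$, and hence $L^2$-approximation of the coefficients gives $L^2$-approximation of the forms. Then $\sum_I F_I\, d(\phi x^{i_1})\cdots d(\phi x^{i_p}) \in \Omega^p_{\operatorname{Lip}}(X)$, where $\phi$ is a fixed Lipschitz cutoff equal to $1$ on the supports of the $F_I$, and its $\rho$-image is exactly $\sum_I F_I\, dx^{i_1}\wedge\cdots\wedge dx^{i_p}$, which is $L^2$-close to $\sum_I h_I\, dx^{i_1}\wedge\cdots\wedge dx^{i_p}$. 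This handles one chart; summing over the (locally finite) partition of unity and controlling tails gives density on all of $X^*$.

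The main obstacle, and the point requiring care, is the globalization: the partition of unity $\{\psi_\alpha\}$ is only locally finite, not finite, since $X^*$ need not be compact even though $X$ is — indeed $X^* = X \setminus (X \setminus X^*)$ and the complement, while of measure zero, can be large. So for a general $\eta \in \Omega^p_{L^2}(X)$ one cannot simply take a finite sum over charts; instead, given $\varepsilon > 0$, I would first use dominated convergence to find a finite subcollection of charts on which $\eta$ is concentrated up to $L^2$-error $\varepsilon/2$ (i.e. $\|\eta - \sum_{\alpha \in S} \psi_\alpha \eta\|_{L^2} < \varepsilon/2$ for a finite $S$, using that $\|\eta\|_{L^2}^2 < \infty$ and $\sum_\alpha \psi_\alpha = 1$ a.e.), and then approximate each of the finitely many $\psi_\alpha \eta$, $\alpha \in S$, within $\varepsilon/(2|S|)$ by the chart-by-chart construction above. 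One should also check that multiplication by a Lipschitz $\psi_\alpha$ and the cutoff trick keep everything inside $\Omega^p_{\operatorname{Lip}}(X)$ with the expected $\rho$-image, which follows from the Leibniz rule $d(fg) = f\,dg + g\,df$ exactly as in the proof of Lemma \ref{unique}. Modulo this bookkeeping, the argument is routine.
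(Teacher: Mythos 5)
Your argument is correct and is essentially the same as the paper's, which simply observes that forms $F\,dx^{i_1}\wedge\cdots\wedge dx^{i_p}$ with $F$ Lipschitz and compactly supported in a coordinate chart lie in $\im(\rho)$ and are dense in $\Omega^*_{L^2}(X)$. You supply the routine partition-of-unity, mollification, and locally-finite-cover bookkeeping that the paper leaves implicit (and your opening sentence about the orthogonal complement is a red herring, since you then argue by direct approximation, which is what the paper does).
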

\begin{proof}
  As in the proof of Lemma \ref{unique}, let $U$ be a relatively
  compact coordinate neighborhood and let $F$ be a Lipschitz
  function with support in $U$.
  Then $F dx^{i_1} \wedge \ldots \wedge dx^{i_p}$ is in the
  image of $\rho$, 
  from which the lemma follows.
  \end{proof}

Let $\Omega^p_{L^2,d}(X)$ be the elements
$\omega \in \Omega^p_{L^2}(X)$ for which there is some
$\eta \in \Omega^{p+1}_{L^2}(X)$ so that for all
$\omega^\prime \in \Omega^{n-p-1}_{\Lip}(X; {\mathcal O})$, we have
\begin{equation} \label{2.2}
\int_{X^*} \left( d\omega^\prime \wedge \omega + 
(-1)^{n-p-1} \omega^\prime \wedge \eta \right) = 0.
\end{equation}
If such an $\eta$ exists then it is unique, and we put $d\omega = \eta$.
This defines a map $d : \Omega^p_{L^2,d}(X) \rightarrow
\Omega^{p+1}_{L^2}(X)$. (In the case when $X$ is a smooth closed Riemannian
manifold, the definition of $d$ is similar to how one defines
the maximal closed extension of the exterior derivative on smooth forms.)

\begin{example} \label{ex2.3}
If $X = [0,1]$ then $\Omega^0_{L^2,d}([0,1]) = \{f \in H^1([0,1]) :
f(0) = f(1) = 0\}$ and $\Omega^1_{L^2,d}([0,1]) = \Omega^1_{L^2}([0,1])$.
More generally, if $X$ is a smooth compact 
Riemannian manifold-with-boundary,
with boundary inclusion $i : \partial X \rightarrow X$, 
then an element $\omega$ of $\Omega^*_{L^2,d}(X)$ has a well-defined
restriction $i^* \omega$ in
$\Omega^*(\partial X)/\overline{\Image(d)}$
that vanishes. These are relative (Dirichlet) boundary
conditions.
\end{example}

\begin{remark} \label{rem2.6}
If we replace $X^*$ by an open subset of $X^*$ with full measure then
$\Omega^p_{L^2}(X)$ and $\Omega^p_{L^2,d}(X)$ do not change. (This would
not be the case if we required $\omega^\prime$ to have support in
a compact subset of $X^*$.)
As a consequence, $\Omega^p_{L^2}(X)$ and $\Omega^p_{L^2,d}(X)$ are
independent of the choice of $X^*$. Namely, if $X^*_1$ and $X^*_2$
are two different choices then in each case, the ensuing spaces 
$\Omega^p_{L^2}(X)$ and $\Omega^p_{L^2,d}(X)$ are the same as those
coming from $X^*_3 = X^*_1 \cap X^*_2$.

In particular, if $X$ is a compact $n$-dimensional
Alexandrov space, let $X^*_\delta$ be the subspace of Example
\ref{ex2.1}. If $\delta^\prime < \delta$ then
$X^*_{\delta^\prime}$ is an open subset of $X^*_\delta$ with
full measure.  Hence the notions of $\Omega^p_{L^2}(X)$ and $\Omega^p_{L^2,d}(X)$
are independent of $\delta$.
\end{remark}

\begin{lemma} \label{lem2.7}
The subspace $\Omega^p_{L^2,d}(X)$ is dense in
$\Omega^p_{L^2}(X)$.
\end{lemma}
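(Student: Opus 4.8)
The plan is to show that $\Omega^p_{L^2,d}(X)$ is dense in $\Omega^p_{L^2}(X)$ by exhibiting a dense family of forms that happen to lie in $\Omega^p_{L^2,d}(X)$. By Lemma \ref{dense}, the image $\rho(\Omega^p_{\Lip}(X))$ is already dense in $\Omega^p_{L^2}(X)$, so it suffices to show that $\rho(\Omega^p_{\Lip}(X)) \subset \Omega^p_{L^2,d}(X)$. In fact this is essentially built into the construction: given $\omega^\prime \in \Omega^p_{\Lip}(X)$, I claim that $\rho(\omega^\prime)$ is an element of $\Omega^p_{L^2,d}(X)$ with $d(\rho(\omega^\prime)) = \rho(d\omega^\prime)$, where $d\omega^\prime$ is the exterior derivative already defined on the Lipschitz forms. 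The required identity \eqref{2.2} is precisely the integration-by-parts statement that for all $\eta^\prime \in \Omega^{n-p-1}_{\Lip}(X;{\mathcal O})$,
\begin{equation}
\int_{X^*} \left( d\eta^\prime \wedge \rho(\omega^\prime) + (-1)^{n-p-1} \eta^\prime \wedge \rho(d\omega^\prime) \right) = 0,
\end{equation}
and this follows from Lemma \ref{newlemma} applied to the wedge product $\eta^\prime \wedge \omega^\prime \in \Omega^{n-1}_{\Lip}(X;{\mathcal O})$, using the Leibniz rule $\rho(d(\eta^\prime \wedge \omega^\prime)) = \rho(d\eta^\prime) \wedge \rho(\omega^\prime) + (-1)^{n-p-1} \rho(\eta^\prime) \wedge \rho(d\omega^\prime)$.

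First I would verify the hypotheses of Lemma \ref{newlemma} are met: one needs $\rho(\eta^\prime \wedge \omega^\prime)$ and $\rho(d(\eta^\prime \wedge \omega^\prime))$ to have compact support in $X^*$. Here the form $\omega^\prime$ is a fixed Lipschitz form whose $\rho$-image has support all of $X^*$ in general, so Lemma \ref{newlemma} as literally stated does not immediately apply. This is the main obstacle, and the fix is to approximate: for $\rho(\omega^\prime)$ I instead use a truncated version $\chi \rho(\omega^\prime)$ where $\chi$ ranges over Lipschitz cutoff functions supported in relatively compact coordinate neighborhoods of $X^*$. As in the proofs of Lemmas \ref{unique} and \ref{dense}, forms of the shape $F\, dx^{i_1} \wedge \ldots \wedge dx^{i_p}$ with $F$ Lipschitz of compact support in a chart $U$ are dense in $\Omega^p_{L^2}(X)$, and each such form equals $\rho$ of the explicit Lipschitz form $F\, d(\phi x^{i_1}) \ldots d(\phi x^{i_p})$ constructed there, whose $\rho$-image and whose $\rho$-image under $d$ both have compact support in $U \subset X^*$. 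So it is exactly for these forms that Lemma \ref{newlemma} (and the integration-by-parts identity above) applies cleanly.

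Thus the clean argument is: let $\mathcal{D}$ be the set of forms $F\, dx^{i_1} \wedge \ldots \wedge dx^{i_p}$ with $F$ Lipschitz and compactly supported in some relatively compact coordinate chart of $X^*$; by the argument in Lemma \ref{dense}, $\mathcal{D}$ is dense in $\Omega^p_{L^2}(X)$. For each $\xi \in \mathcal{D}$, writing $\xi = \rho(\omega^\prime)$ with $\omega^\prime$ as above, both $\rho(\omega^\prime)$ and $\rho(d\omega^\prime)$ have compact support in $X^*$, so for every $\eta^\prime \in \Omega^{n-p-1}_{\Lip}(X;{\mathcal O})$ the wedge $\eta^\prime \wedge \omega^\prime$ satisfies the hypotheses of Lemma \ref{newlemma}, giving \eqref{2.2} with $\eta = \rho(d\omega^\prime) \in \Omega^{p+1}_{L^2}(X)$. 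Hence $\xi \in \Omega^p_{L^2,d}(X)$. Since $\mathcal{D} \subset \Omega^p_{L^2,d}(X)$ and $\mathcal{D}$ is dense in $\Omega^p_{L^2}(X)$, the lemma follows. I expect the only delicate points to be bookkeeping: confirming the Leibniz rule survives under $\rho$ (which is a homomorphism of differential graded algebras, so this is immediate for the algebraic $d$, and compatibility of $\rho \circ d$ with the $d$ on $L^2$-forms is exactly what we are proving), and confirming that the $L^\infty_{loc}$ control on $g$ and $g^{-1}$ guarantees the relevant wedge products and integrals are genuinely $L^1$ on $X^*$ so that all the integrals in sight converge.
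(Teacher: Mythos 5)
Your proof is correct and follows essentially the same approach as the paper: reduce to forms $F\,dx^{i_1}\wedge\cdots\wedge dx^{i_p}$ with $F$ Lipschitz and compactly supported in a coordinate chart (which span a dense subspace by Lemma~\ref{dense}), and show these lie in $\Omega^p_{L^2,d}(X)$ by integration by parts. The one organizational difference is that you invoke Lemma~\ref{newlemma} explicitly to get the integration-by-parts identity for the Lipschitz form $\omega^\prime = F\,d(\phi x^{i_1})\cdots d(\phi x^{i_p})$, whereas the paper first runs a partition-of-unity reduction and then says tersely that smooth compactly supported forms in coordinates satisfy \eqref{2.2}; your route makes that final step more transparent, and the partition of unity you avoid is effectively absorbed into the density assertion of Lemma~\ref{dense}.
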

\begin{proof}
It suffices to prove the corresponding statement for the elements of
$\Omega^p_{L^2}(X)$ with support in a fixed but arbitrary 
compact set $K \subset X^*$. 
If $\{\sigma_i \}_{i=1}^N$ are Lipschitz functions on $X$ and
$\{\omega_i \}_{i=1}^N$ are elements of $\Omega^p_{L^2,d}(X)$
then one can
check that $\sum_{i=1}^N \sigma_i \omega_i \in \Omega^p_{L^2,d}(X)$, with
$d \sum_{i=1}^N \sigma_i \omega_i =
\sum_{i=1}^N (d\sigma_i \wedge \omega_i +  \sigma_i d \omega_i)$.  
Using a covering of $K$ by relatively compact coordinate neighborhoods
of $X^*$, and nonnegative
subordinate Lipschitz functions whose sum is one on $K$,
we can reduce to the case when
$K$ is a closed ball in a fixed coordinate neighborhood.  Considering
forms with support in $K$ that are smooth with respect to the
given coordinates, the lemma follows.
\end{proof}

\begin{lemma} \label{lem2.8}
The operator $d : \Omega^{p}_{L^2,d}(X) \rightarrow \Omega^{p+1}_{L^2}(X)$
is closed.
\end{lemma}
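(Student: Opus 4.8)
The plan is the standard one: a linear operator whose graph is cut out by a family of continuous linear functionals is automatically closed. Concretely, suppose $\omega_k \in \Omega^p_{L^2,d}(X)$ with $\omega_k \to \omega$ in $\Omega^p_{L^2}(X)$ and $d\omega_k \to \eta$ in $\Omega^{p+1}_{L^2}(X)$; the goal is to conclude that $\omega \in \Omega^p_{L^2,d}(X)$ and $d\omega = \eta$. Fixing a test form $\omega' \in \Omega^{n-p-1}_{\Lip}(X;{\mathcal O})$, the defining relation (\ref{2.2}) applied to each $\omega_k$ reads
\[
\int_{X^*}\left( d\omega' \wedge \omega_k + (-1)^{n-p-1}\,\omega'\wedge d\omega_k\right)=0 ,
\]
and one wants to let $k\to\infty$ and land on the same identity with $\omega_k, d\omega_k$ replaced by $\omega, \eta$. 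The uniqueness of the form $\eta$ in (\ref{2.2}), noted immediately after that equation, then gives $d\omega = \eta$ and finishes the argument.

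The one point that requires care is that the two maps $\alpha \mapsto \int_{X^*} d\omega'\wedge\alpha$ on $\Omega^p_{L^2}(X)$ and $\beta\mapsto\int_{X^*}\omega'\wedge\beta$ on $\Omega^{p+1}_{L^2}(X)$ are continuous in the $L^2$-topology, so that they commute with the limits. For this I would first note that $\rho(\omega')$ and $\rho(d\omega')$ are in fact \emph{globally} bounded forms on $X^*$, not merely locally bounded: writing $\omega'$ as a finite sum of terms $f_0\, df_1\cdots df_{n-1}$ with $f_i\in\Lip(X)$, each $f_i$ restricts on $X^*$ to a function that is Lipschitz with respect to $d_{X^*}$ (because $d_X$ and $d_{X^*}$ agree near the diagonal of $X^*\times X^*$), so $|df_i|\le\Lip(f_i)$ almost everywhere; hence $\|\rho(\omega')\|_{L^\infty(X^*)}$ is finite, and $\rho(d\omega')$ is bounded by the same reasoning since $d\omega'$ is obtained by the same operation. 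As $X$, and therefore $X^*$, has finite $n$-dimensional Hausdorff mass, $L^\infty(X^*)\subset L^2(X^*)\subset L^1(X^*)$, so all the integrals in sight are absolutely convergent and Cauchy--Schwarz yields the desired continuity.

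Granting this, $\int_{X^*} d\omega'\wedge\omega_k\to\int_{X^*}d\omega'\wedge\omega$ and $\int_{X^*}\omega'\wedge d\omega_k\to\int_{X^*}\omega'\wedge\eta$, so
\[
\int_{X^*}\left( d\omega'\wedge\omega + (-1)^{n-p-1}\,\omega'\wedge\eta\right)=0
\]
for every $\omega'\in\Omega^{n-p-1}_{\Lip}(X;{\mathcal O})$. By the definition of $\Omega^p_{L^2,d}(X)$ this exhibits $\omega$ as an element of $\Omega^p_{L^2,d}(X)$ with $d\omega=\eta$, which is exactly the closedness assertion. I do not expect a genuine obstacle here: the only subtle step is the boundedness-plus-finite-measure observation above, and it is precisely the use of test forms defined on all of $X$ (rather than ones merely compactly supported in $X^*$) that makes the pairing continuous, consistent with the parenthetical warning in Remark \ref{rem2.6}.
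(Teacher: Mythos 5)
Your proof is correct and follows essentially the same route as the paper's: the paper simply says ``passing to the limit in (\ref{2.2})'' proves the lemma, and your argument supplies the justification for that step (namely that $\rho(\omega')$ and $\rho(d\omega')$ are globally bounded, $X$ has finite Hausdorff mass, so the pairings against a fixed test form are $L^2$-continuous). This is a fleshed-out version of the same argument, not a different approach.
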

\begin{proof}
Suppose that $\{\omega_i\}_{i=1}^\infty$ is a sequence in
$\Omega^{p}_{L^2,d}(X)$ so that there is a limit of pairs
$\lim_{i \rightarrow \infty} 
(\omega_i, d\omega_i) = (\omega_\infty, \eta_\infty)$ for some
$(\omega_\infty, \eta_\infty) \in 
\Omega^{p}_{L^2}(X) \oplus \Omega^{p+1}_{L^2}(X)$.
Replacing $\omega$ in (\ref{2.2}) by $\omega_i$ and passing to the limit
shows that $\omega_\infty \in \Omega^{p}_{L^2,d}(X)$ and
$\eta_\infty = d \omega_\infty$. This proves the lemma.
\end{proof}

\begin{lemma} \label{lem2.9}
The image of $d : \Omega^p_{L^2,d}(X) \rightarrow
\Omega^{p+1}_{L^2}(X)$ lies in $\Omega^{p+1}_{L^2,d}(X)$, and
$\overline{\Image(d)} \subset \Ker(d)$.
\end{lemma}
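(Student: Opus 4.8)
The plan is to establish the two assertions in sequence. For the first I would prove the sharper statement that if $\omega \in \Omega^p_{L^2,d}(X)$ then $d\omega \in \Omega^{p+1}_{L^2,d}(X)$ with $d(d\omega) = 0$; this yields at once both $\Image(d) \subset \Omega^{p+1}_{L^2,d}(X)$ and $\Image(d) \subset \Ker(d)$. By the definition of $\Omega^{p+1}_{L^2,d}(X)$ I must exhibit some $\eta \in \Omega^{p+2}_{L^2}(X)$ with $\int_{X^*} \left( d\psi \wedge d\omega + (-1)^{n-p-2} \psi \wedge \eta \right) = 0$ for every test form $\psi \in \Omega^{n-p-2}_{\Lip}(X; {\mathcal O})$, and the claim is that $\eta = 0$ works. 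Given such a $\psi$, the test form $d\psi$ lies in $\Omega^{n-p-1}_{\Lip}(X; {\mathcal O})$, so I may apply the defining relation (\ref{2.2}) for $\omega$ with the test form there taken to be $d\psi$. This gives $\int_{X^*} \left( d(d\psi) \wedge \omega + (-1)^{n-p-1} d\psi \wedge d\omega \right) = 0$. Since $d^2 = 0$ in the differential graded algebra $\Omega^*_{\Lip}(X)$, the form $d(d\psi)$ is identically zero, hence so is $\rho(d(d\psi))$, and we are left with $\int_{X^*} d\psi \wedge d\omega = 0$. Thus $\eta = 0$ does the job, proving $d\omega \in \Omega^{p+1}_{L^2,d}(X)$ with $d(d\omega) = 0$, and in particular $\Image(d) \subset \Ker(d)$.

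For the second assertion it then remains only to observe that $\Ker(d)$ is a closed subspace of $\Omega^{p+1}_{L^2}(X)$, since then $\overline{\Image(d)} \subset \overline{\Ker(d)} = \Ker(d)$. Closedness is immediate from Lemma \ref{lem2.8}: if $\{\omega_i\}$ is a sequence in $\Ker(d)$ converging in $L^2$ to $\omega_\infty$, then the pairs $(\omega_i, d\omega_i) = (\omega_i, 0)$ converge to $(\omega_\infty, 0)$ in $\Omega^{p+1}_{L^2}(X) \oplus \Omega^{p+2}_{L^2}(X)$, so by the closedness of $d$ we get $\omega_\infty \in \Omega^{p+1}_{L^2,d}(X)$ with $d\omega_\infty = 0$, i.e. $\omega_\infty \in \Ker(d)$.

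I do not anticipate a genuine obstacle here: the argument is soft once one thinks to feed $d\psi$, rather than $\psi$, into (\ref{2.2}) and to invoke $d^2 = 0$ at the level of test forms. The only points needing care are the bookkeeping with the convention of writing $\omega'$ for $\rho(\omega')$ (so that one must confirm $\rho(d(d\psi)) = 0$ really does annihilate the extraneous term) and the degree and sign conventions in (\ref{2.2}), neither of which causes real difficulty.
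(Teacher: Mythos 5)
Your proof is correct and follows the same strategy as the paper's: substitute $d\psi$ for the test form in (\ref{2.2}), use $d^2=0$ at the level of Lipschitz test forms to conclude $\int_{X^*} d\psi \wedge d\omega = 0$ (so that $\eta = 0$ witnesses $d\omega \in \Omega^{p+1}_{L^2,d}(X)$ with $d(d\omega)=0$), and then invoke the closedness of $d$ from Lemma \ref{lem2.8} to get $\overline{\Image(d)} \subset \Ker(d)$. You merely spell out the bookkeeping a bit more explicitly than the paper does.
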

\begin{proof}
Given $\omega \in \Omega^p_{L^2,d}(X)$,
replacing $\omega^\prime$ in (\ref{2.2}) with $d\omega^\prime$ gives
\begin{equation} \label{2.10}
\int_{X^*} d\omega^\prime \wedge d\omega = 0
\end{equation}
for all $\omega^\prime \in \Omega_{\Lip}^{n-p-2}(X; {\mathcal O})$. 
It follows that $\Image(d) \subset \Omega^{p+1}_{L^2,d}(X)$ and
$d^2 = 0$. Since $d$ is a closed operator, $\Ker(d)$ is a closed subset
of $\Omega^{p+1}_{L^2}(X)$. Hence $\overline{\Image(d)} \subset \Ker(d)$.
\end{proof}

\subsection{Differential form Laplacian} \label{subsec2.3}

Define a quadratic form on $\Omega^{p}_{L^2,d}(X)/\overline{\Image(d)}
\subset \Omega^{p}_{L^2}(X)/\overline{\Image(d)}$ by
\begin{equation} \label{2.11}
Q(\omega) = \int_{X^*} \langle d\omega, d\omega \rangle \: \dvol_{X^*}.
\end{equation}
As $d$ is closed, it follows that $Q$ is a closed quadratic form in the
sense of \cite[Section VIII.6]{Reed-Simon (1980)}. There is a
corresponding self-adjoint operator $\triangle_p = d^* d$, densely
defined on $\Omega^{p}_{L^2}(X)/\overline{\Image(d)}$
\cite[Theorem VIII.15]{Reed-Simon (1980)},
\cite[Theorem X.25]{Reed-Simon (1975)}. Its domain is
\begin{equation} \label{2.12}
\Dom(\triangle_p) = \{\omega \in \Omega^{p}_{L^2,d}(X)/\overline{\Image(d)}
\: : \: d\omega \in \Dom(d^*) \}.
\end{equation}

We have isometric isomorphisms
\begin{equation} \label{2.13}
  \Omega^*_{L^2}(X) \cong
  \left( \Omega^{*}_{L^2}(X)/\overline{\Image(d)} \right)
  \oplus \overline{\Image(d)}
\end{equation}
and
\begin{equation} \label{2.14}
  \Omega^{*}_{L^2}(X)/\overline{\Image(d)} \cong
  \left( \Omega^{*}_{L^2}(X)/{\Ker(d)} \right) \oplus
  \left( \Ker(d)/\overline{\Image(d)} \right).
\end{equation}
Using the isomorphisms
\begin{equation} \label{2.15}
\Omega^{*}_{L^2}(X)/{\Ker(d)} \cong \left( \Ker(d) \right)^\perp
\end{equation}
and
\begin{equation} \label{2.16}
\Ker(d)/\overline{\Image(d)} \cong \Ker(\triangle),
    \end{equation}
we obtain an orthogonal decomposition
\begin{equation} \label{2.17}
  \Omega^*_{L^2}(X) \cong \left( \Ker(d) \right)^\perp \oplus \Ker(\triangle_*)
  \oplus
  \overline{\Image(d)}.
\end{equation}
We have already defined $\triangle_p$ on
\begin{equation} \label{2.18}
  \Omega^{*}_{L^2}(X)/\overline{\Image(d)} \cong
  \left( \Ker(d) \right)^\perp \oplus
  \Ker(\triangle_*).
\end{equation}
We can define the Laplacian on $\overline{\Image(d)} \subset
\Omega^*_{L^2}(X)$ by using the
isomorphism $d : \Omega^{*-1}_{L^2}(X)/\Ker(d) \rightarrow
\Image(d)$ to transfer the Laplacian from
$\Omega^{*-1}_{L^2}(X)/\Ker(d)$. In this way, there is a 
Laplacian $\triangle^{Hodge}_*$ on $\Omega^*_{L^2}(X)$ with a full
Hodge decomposition.  Using this Hodge decomposition,
spectral questions for $\triangle^{Hodge}_*$
reduce to spectral questions about the
Laplacian $\triangle_*$ on $\Omega^{*}_{L^2}(X)/\overline{\Image(d)}$.
In the rest of the paper, we mostly concentrate on the
latter Laplacian.

\begin{example} \label{ex2.19}
If $X$ is a compact Riemannian manifold-with-boundary then
$\triangle_p$ is the densely-defined Laplacian on the Hilbert space
$\Omega^{p}_{L^2}(X)/\overline{\Image(d)}$,
with relative (Dirichlet)
boundary conditions, e.g. $\Ker(\triangle_p) \cong \HH^p(X, \partial X; \R)$.
\end{example}

\begin{example} \label{ex2.20}
  We give an example in which $\triangle_p$ has an infinite dimensional
  kernel.  Start with the cone $(0,1] \times S^3$, equipped with the metric
    $g = dr^2 + r^2 g_{S^3}$. Glue a $4$-ball onto the $S^3$-boundary.
    Choose some point $m \in S^3$ and
    for each $i > 1$, perform a connected sum with a copy of $\C P^2$,
    with size comparable to $100^{-i}$, at the point $(i^{-1}, m)$ in
    the conical region.  Call the result $X^*$ and let
    $X$ be its $1$-point compactification.
Now
    $\Image \left( \HH^p_c(X^*) \rightarrow \HH^p(X^*) \right)$ injects
    into $\Ker(d : \Omega^p_{L^2,d}(X) \rightarrow
      \Omega^{p+1}_{L^2,d}(X))/
        \overline{\Image(d : \Omega^{p-1}_{L^2,d}(X) \rightarrow
          \Omega^p_{L^2,d}(X))}$; see
        \cite[Proposition 4]{Lott (1996)}, whose proof does not
        need completeness of $X^*$.  As
$\Image \left( \HH^2_c(X^*) \rightarrow \HH^2(X^*) \right)$
            is infinite dimensional, it follows that $\Ker(\triangle_2)$ is
            infinite dimensional.
\end{example}

\begin{remark}
  An alternative differential form Laplacian 
  can be defined using the closure of the differential on $\Omega^*_{\Lip}(X)$
  (sometimes called the minimal closed extension).
  Namely, say that an element $\omega \in \Omega^*_{L^2}(X)$ lies in
  $\Dom(d)$ if there is a sequence $\omega_i \in \Omega^*_{\Lip}(X)$
  such that $\lim_{i \rightarrow \infty} \omega_i
  = \omega$ in $\Omega^*_{L^2}(X)$, and
  $\lim_{i \rightarrow \infty} d\omega_i$ exists in
  $\Omega^{*+1}_{L^2}(X)$.
  If this is the case, put
  $d \omega = \lim_{i \rightarrow \infty} d\omega_i$; it is independent
  of the particular choice of $\{\omega_i\}_{i=1}^\infty$.
  Then $d$ is a closed operator and one can consider $d^* d$, acting on
  $\Omega^*_{L^2}(X)/\overline{\Image(d)}$.

  If $X$ is a compact Riemannian manifold-with-boundary then one recovers the
  differential form Laplacian on $\Omega^*_{L^2}(X)/\overline{\Image(d)}$,
  with absolute (Neumann) boundary conditions, this way.  The differential
  form Laplacian $\triangle_*$, as defined following (\ref{2.11}),
  is more convenient
  for the purposes of this paper, as will be seen in the proof of Theorem
  \ref{thm1.3}.
\end{remark}

\begin{remark} \label{rem2.21}
If $X$ is a smooth closed Riemannian manifold then there is a
Hodge Laplacian $\triangle^{Hodge}_p = dd^* + d^* d$ acting on
the $H^2$-regular $p$-forms.
The space of $H^2$-regular $p$-forms is independent of the particular
Riemannian metric.

If $X$ is a closed Riemannian Lipschitz manifold then
a Hodge Laplacian $\triangle^{Hodge}_*$, with dense domain in
$\Omega^*_{L^2}(X)$, was defined in
\cite{Teleman (1983)}.  However, there are some subtleties.  For
example, the corresponding quadratic form
\begin{equation} \label{2.22}
Q^{Hodge}(\omega) =
\int_X \left( \langle d \omega, d \omega \rangle +
\langle d^* \omega, d^* \omega \rangle \right) \: \dvol_X
\end{equation}
has domain
$\Dom(Q^{Hodge}) = \{ \omega \in \Omega^*_{L^2}(X) \: : \:
d\omega \in \Omega^{*+1}_{L^2}(X), d * \omega \in
\Omega^{n-*+1}_{L^2}(X) \}$. Due to the appearance of the Hodge duality
operator $*$ in $d*\omega$, the domain of $Q^{Hodge}$ definitely depends on the
precise $L^{\infty}_{loc}$-Riemannian metric used on $X$
\cite[p. 46]{Teleman (1983)}.

In contrast,
the quadratic form $Q$ of (\ref{2.11}) has domain
$\Omega^{*}_{L^2,d}(X)/\overline{\Image(d)}$ 
which, in the case of a closed
Riemannian Lipschitz manifold, is independent of the precise
Riemannian metric.
Hence the domain of $\triangle_*$ is also independent of the precise
Riemannian metric. 
This is one manifestation of the fact that
$\triangle_*$ has better biLipschitz properties than $\triangle^{Hodge}_*$.
\end{remark}

\section{$L^2$-sheaf} \label{sec3}

In this section we define a
sheaf $\Omega^*_{L^2_{loc},d}$ of differential graded complexes,
constructed from certain
locally-$L^2$ differential forms.  The result of this section will be
used in Section \ref{sec5}.

We continue with the setup of Section \ref{sec2}.
In particular, $X$ is a compact metric space and $X^* \subset X$ is an
open subset of full Hausdorff measure, with the structure of a
Riemannian Lipschitz manifold.

Given an open set $U \subset X$, let
$\Omega^*_{L^2_{loc}}(U)$ be the locally-$L^2$
differential forms on $U^* = X^* \cap U$.
Let $\Omega^p_{L^2_{loc},d}(U)$ be the elements
$\omega \in \Omega^p_{L^2_{loc}}(U)$ for which there is some
$\eta \in \Omega^{p+1}_{L^2_{loc}}(U)$ 
so that for all compact subsets $K \subset U$
and all
$\omega^\prime \in \Omega^{n-p-1}_{\Lip}(X; {\mathcal O})$ with support in $K$,
we have
\begin{equation} \label{3.1}
\int_{U^*} \left( d\omega^\prime \wedge \omega + 
(-1)^{n-p-1} \omega^\prime \wedge \eta \right) = 0.
\end{equation}
If such an $\eta$ exists then it is unique, and we put $d\omega = \eta$.
Note that an element of $\Omega^p_{L^2_{loc},d}(U)$ may not satisfy
relative boundary conditions in any sense.

\begin{lemma} \label{lem3.2}
The assignment $U \rightarrow \Omega^p_{L^2_{loc},d}(U)$ defines a sheaf
$\Omega^p_{L^2_{loc},d}$ on $X$.
\end{lemma}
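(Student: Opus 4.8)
The plan is to exhibit $\Omega^p_{L^2_{loc},d}$ as a subsheaf of $\Omega^p_{L^2_{loc}}$. The presheaf $U \mapsto \Omega^p_{L^2_{loc}}(U)$ is already a sheaf: it is the pushforward to $X$ of the sheaf of locally-$L^2$ $p$-forms on the Lipschitz manifold $X^*$, and being locally $L^2$ is a local condition, while forms agreeing a.e.\ on the members of an open cover agree a.e.\ on the union. The restriction maps for $\Omega^p_{L^2_{loc},d}$ are just restriction of forms: if $V \subset U$ are open and $\omega \in \Omega^p_{L^2_{loc},d}(U)$ with $d\omega = \eta$, then for any compact $K \subset V$ and any $\omega^\prime \in \Omega^{n-p-1}_{\Lip}(X;{\mathcal O})$ with support in $K$, the integrand in (\ref{3.1}) is supported in $K \cap X^* \subset V^* \subset U^*$, so the integral over $V^*$ coincides with that over $U^*$ and hence vanishes. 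Thus $\omega|_{V^*} \in \Omega^p_{L^2_{loc},d}(V)$ with $d(\omega|_{V^*}) = \eta|_{V^*}$, and $\Omega^p_{L^2_{loc},d}$ is a sub-presheaf of $\Omega^p_{L^2_{loc}}$ on which $d$ commutes with restriction.

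The separation axiom is then automatic: if $\{U_i\}$ is an open cover of an open set $U$ and $\omega \in \Omega^p_{L^2_{loc},d}(U)$ restricts to $0$ on every $U_i$, then $\omega$ vanishes a.e.\ on each $U_i^* = X^* \cap U_i$, hence a.e.\ on $U^* = \bigcup_i U_i^*$, so $\omega = 0$.

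The gluing axiom is where the work is. Given an open cover $\{U_i\}$ of an open set $U$ and sections $\omega_i \in \Omega^p_{L^2_{loc},d}(U_i)$ agreeing on overlaps, use the sheaf property of locally-$L^2$ forms to glue the $\omega_i$ to a form $\omega \in \Omega^p_{L^2_{loc}}(U)$ and the $d\omega_i$ to a form $\eta \in \Omega^{p+1}_{L^2_{loc}}(U)$; the $d\omega_i$ do agree on overlaps because, by the restriction property above, both $d\omega_i$ and $d\omega_j$ compute the exterior derivative of $\omega_i = \omega_j$ on $(U_i \cap U_j)^*$, and that derivative is unique. It remains to verify (\ref{3.1}) for $(\omega, \eta)$. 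Fix a compact $K \subset U$ and $\omega^\prime \in \Omega^{n-p-1}_{\Lip}(X;{\mathcal O})$ with support in $K$. Choose finitely many of the $U_i$, say $U_1, \ldots, U_N$, covering $K$, and a Lipschitz partition of unity $\phi_1, \ldots, \phi_N$ on $X$ with $\supp(\phi_i) \subset U_i$ (automatically compact, since $X$ is compact) and $\sum_{i=1}^N \phi_i \equiv 1$ on a neighborhood of $K$; such a partition exists because distance functions on a metric space are Lipschitz. Then each $\phi_i \omega^\prime$ again lies in $\Omega^{n-p-1}_{\Lip}(X;{\mathcal O})$, with support in a compact subset of $U_i$; one has $\omega^\prime = \sum_i \phi_i \omega^\prime$ on $X^*$, and hence $d\omega^\prime = \sum_i d(\phi_i \omega^\prime)$ on $X^*$ by Lemma \ref{unique}. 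For each $i$ the integrand of
\begin{equation*}
\int_{U^*} \left( d(\phi_i \omega^\prime) \wedge \omega + (-1)^{n-p-1} \phi_i \omega^\prime \wedge \eta \right)
\end{equation*}
is supported in a compact subset of $U_i^*$, where $\omega$ restricts to $\omega_i$ and $\eta$ to $d\omega_i$, so this integral equals the corresponding integral over $U_i^*$, which vanishes by the definition of $\Omega^p_{L^2_{loc},d}(U_i)$. Summing over $i$ gives (\ref{3.1}), so $\omega \in \Omega^p_{L^2_{loc},d}(U)$ with $d\omega = \eta$.

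The only delicate point is this localization step: one must check that multiplying a test form by a Lipschitz cutoff keeps it in $\Omega^*_{\Lip}(X;{\mathcal O})$, that the operation is compatible with $\rho$ and --- through Lemma \ref{unique} --- with $d$, and that all the support inclusions hold as stated. Everything else is formal, following from the sheaf property of the locally-$L^2$ forms and the uniqueness of the exterior derivative on $\Omega^p_{L^2_{loc},d}$.
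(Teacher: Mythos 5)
Your proof follows essentially the same route as the paper's: both reduce the gluing axiom to the local defining equation by covering the compact set $K$ with finitely many $U_i$, choosing a subordinate Lipschitz partition of unity, multiplying the test form $\omega'$ by the partition functions (which stays in $\Omega^*_{\Lip}(X;\mathcal{O})$), applying the defining integral identity on each $U_i$, and summing. Your write-up is somewhat more explicit than the paper's at two minor points — you glue the $d\omega_i$ to a global $\eta$ first and then verify (3.1) for the pair $(\omega,\eta)$, whereas the paper produces the candidate $\sum_i \sigma_i\, d\omega_i$ relative to a fixed $K$ and leaves the patching over varying $K$ implicit; and you spell out the invocation of Lemma \ref{unique} to justify $d\omega' = \sum_i d(\phi_i\omega')$ — but these are expository differences, not a different argument.
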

\begin{proof}
Given $V \subset U$, there is clearly a restriction map
$r_{V,U} : \Omega^p_{L^2_{loc},d}(U) \rightarrow \Omega^p_{L^2_{loc},d}(V)$
that defines a presheaf. Let
$\{U_\alpha\}$ be an open covering of $U$. Given elements
$\omega_1, \omega_2 \in \Omega^p_{L^2_{loc},d}(U)$, if
$r_{U_\alpha, U}(\omega_1) = r_{U_\alpha, U}(\omega_2)$ for all $\alpha$, then
$\omega_1 = \omega_2$. 

Now suppose that 
$\omega_\alpha \in \Omega^p_{L^2_{loc},d}(U_\alpha)$ are such that
$r_{U_\alpha \cap U_\beta, U_\alpha}(\omega_\alpha) = 
r_{U_\alpha \cap U_\beta, U_\beta}(\omega_\beta)$
for all $\alpha$ and $\beta$. There is a unique
$\omega \in \Omega^p_{L^2_{loc}}(U)$ so that
$\omega_\alpha = r_{U_\alpha, U}(\omega)$ for all $\alpha$.
We have to show that $\omega \in \Omega^p_{L^2_{loc},d}(U)$.
Let $K$ be a compact subset of $U$. Then $K$ is covered
by a finite subset $\{U_i\}_{i=1}^N$ of the $U_\alpha$'s.
Let $\{\sigma_i\}_{i=1}^N$ be nonnegative subordinate Lipschitz functions whose
sum is one on $K$.
In particular, the support of $\sigma_i$
is a compact set $K_i \subset U_i$. For any 
$\omega^\prime \in \Omega^{n-p-1}_{\Lip}(X; {\mathcal O})$ with support in $K$,
since $\sigma_i \omega^\prime \in \Omega^{n-p-1}_{\Lip}(X; {\mathcal O})$
has support in
$U_i$, and $\omega_i \in \Omega^p_{L^2_{loc},d}(U_i)$, we have
\begin{equation} \label{3.3}
\int_{U_i^*} \left( d(\sigma_i \omega^\prime) \wedge \omega + 
(-1)^{n-p-1} \sigma_i \omega^\prime \wedge d\omega_i \right) = 0.
\end{equation}
Summing over $i$ gives
\begin{equation} \label{3.4}
\int_{U^*} \left( d\omega^\prime \wedge \omega + 
(-1)^{n-p-1} \omega^\prime \wedge \sum_{i=1}^N \sigma_i d\omega_i \right) = 0.
\end{equation}  
Thus $\omega \in \Omega^p_{L^2_{loc},d}(U)$, with
$d\omega = \sum_{i=1}^N \sigma_i d\omega_i$.
This proves the lemma.
\end{proof}

\section{Eigenvalue estimates} \label{sec4}

In this section we prove the eigenvalue estimates of
Theorems \ref{thm1.7} and \ref{thm1.3}. In both cases, the
proof is by a contradiction argument.

As mentioned in Remark \ref{rem1.11},
Theorem \ref{thm1.7} actually follows from Theorem \ref{thm1.3}.
As the proof of Theorem \ref{thm1.7} makes the strategy clearer,
we give it first, in Subsection \ref{subsec4.1}.
To get the upper eigenvalue bound,
we pullback forms from a nice subset of a limiting Alexandrov space
that has the same dimension $n$ as the manifolds.
In Subsection \ref{subsec4.2}
we prove Theorem \ref{thm1.3}.
The proof is more involved, in
that we need to pullback forms from an Alexandrov space that may
be of a lower dimension.  We show that there is a region
in the limiting Alexandrov space that is almost Euclidean,
and above which the approximating manifold has a fibration
structure with controlled geometry. We then pullback forms
as in the proof of Theorem \ref{thm1.7}.

\subsection{Noncollapsing case} \label{subsec4.1}

Let $X$ be a compact metric space of the type considered in
Section \ref{sec2}.
Given $k \in \Z^+$ and $p \in [0,n]$, put
\begin{equation} \label{4.1}
\lambda_{k,p}(X) = \inf_{V_{k,p}} \sup_{\psi \in V_{k,p}, \psi \neq 0} \frac{
\| d\psi \|^2
}{
\| \psi \|^2
},
\end{equation}
where $V_{k,p}$ ranges over $k$-dimensional subspaces of 
$\Omega^{p}_{L^2,d}(X)/\Ker(d)$.
If $\triangle_p$ has discrete spectrum
with finite multiplicities then $\lambda_{k,p}(X)$ is the
$k^{th}$ eigenvalue of $\triangle_p$ on
$\Omega^{p}_{L^2}(X)/\Ker(d)$,
counted with multiplicity
\cite[Theorem XIII.2]{Reed-Simon (1978)}.

\begin{lemma} \label{lem4.3}
If $X_1$ and $X_2$ are $C$-biLipschitz then
\begin{equation} \label{4.4}
C^{-2n-4p-2} \lambda_{k,p}(X_1) \le \lambda_{k,p}(X_2) \le
C^{2n+4p+2} \lambda_{k,p}(X_1)
\end{equation}
\end{lemma}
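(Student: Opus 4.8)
The plan is to compare the Rayleigh quotients in (\ref{4.1}) for $X_1$ and $X_2$ directly, since $\lambda_{k,p}$ is defined purely in terms of the $L^2$-norms of forms and their differentials on the Riemannian Lipschitz locus. Let $\Phi : X_1 \to X_2$ be a $C$-biLipschitz homeomorphism. Since being $C$-biLipschitz means $C^{-1} d_{X_1}(x,y) \le d_{X_2}(\Phi(x), \Phi(y)) \le C\, d_{X_1}(x,y)$, the map $\Phi$ carries $X_1^*$ into a subset of $X_2$ of full measure (up to modifying $X^*$, which by Remark \ref{rem2.6} does not change the spaces $\Omega^p_{L^2}$ or $\Omega^p_{L^2,d}$), and on the level of tangent spaces its derivative $D\Phi$, which exists a.e.\ by Rademacher, satisfies $C^{-1} |v| \le |D\Phi(v)| \le C|v|$ a.e. First I would record the pointwise consequences of this for pullbacks of $p$-forms and of the volume form: if $\omega$ is a $p$-form on $X_2^*$, then at a.e.\ point $|\Phi^* \omega|^2 \le C^{2p} |\omega|^2 \circ \Phi$ and also $|\Phi^* \omega|^2 \ge C^{-2p} |\omega|^2 \circ \Phi$, while the Jacobian satisfies $C^{-n} \le |\det D\Phi| \le C^n$, so that $C^{-n} \dvol_{X_1^*} \le \Phi^* \dvol_{X_2^*} \le C^n \dvol_{X_1^*}$.

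Next I would combine these to get the norm comparisons $C^{-2p-n} \|\omega\|^2_{X_2} \le \|\Phi^* \omega\|^2_{X_1} \le C^{2p+n} \|\omega\|^2_{X_2}$ for $\omega \in \Omega^p_{L^2}(X_2)$, and the same with $p$ replaced by $p+1$ applied to $d\omega$. One must check that $\Phi^*$ intertwines the operators $d$: this follows because the defining relation (\ref{2.2}) is phrased via pairings $\int_{X^*}( d\omega' \wedge \omega + (-1)^{n-p-1}\omega' \wedge \eta) = 0$ against test forms $\omega' \in \Omega^{n-p-1}_{\Lip}(X;{\mathcal O})$, and $\Phi^*$ of a Lipschitz function is Lipschitz (pre-composition with the biLipschitz $\Phi$), $\Phi^*$ commutes with $d$ on $\Omega^*_{\Lip}$, and $\Phi^*$ preserves the twist by ${\mathcal O}$ and the integration $\int_{X^*}$ on top forms; hence $\Phi$ gives an isomorphism $\Omega^p_{L^2,d}(X_2) \to \Omega^p_{L^2,d}(X_1)$ with $\Phi^* d = d \Phi^*$, and it sends $\Ker(d)$ to $\Ker(d)$ and $\overline{\Image(d)}$ to $\overline{\Image(d)}$. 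Therefore $\Phi^*$ descends to a linear isomorphism of $\Omega^p_{L^2,d}(X_2)/\Ker(d)$ onto $\Omega^p_{L^2,d}(X_1)/\Ker(d)$ carrying $k$-dimensional subspaces to $k$-dimensional subspaces.

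Then I would estimate the distortion of the Rayleigh quotient: for $\psi \in \Omega^p_{L^2,d}(X_2)/\Ker(d)$ with representative $\omega$, one has
\begin{equation}
\frac{\|d(\Phi^*\omega)\|^2_{X_1}}{\|\Phi^*\omega\|^2_{X_1}}
= \frac{\|\Phi^*(d\omega)\|^2_{X_1}}{\|\Phi^*\omega\|^2_{X_1}}
\le \frac{C^{2(p+1)+n} \|d\omega\|^2_{X_2}}{C^{-2p-n}\|\omega\|^2_{X_2}}
= C^{4p+2n+2}\, \frac{\|d\omega\|^2_{X_2}}{\|\omega\|^2_{X_2}},
\end{equation}
and symmetrically with the roles of $X_1$ and $X_2$ exchanged (using $\Phi^{-1}$, which is also $C$-biLipschitz). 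Taking infima over $k$-dimensional subspaces in the minmax formula (\ref{4.1}) then gives $\lambda_{k,p}(X_2) \le C^{4p+2n+2}\lambda_{k,p}(X_1)$ and $\lambda_{k,p}(X_1) \le C^{4p+2n+2}\lambda_{k,p}(X_2)$, which is exactly (\ref{4.4}).

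The main technical obstacle, and the only point requiring genuine care, is justifying the change-of-variables formula $\int_{X_1^*}\Phi^*\alpha = \pm\int_{X_2^*}\alpha$ for top-degree ${\mathcal O}$-valued forms under a merely biLipschitz $\Phi$ between Lipschitz manifolds with $L^\infty_{loc}$ metrics — and, hand in hand with it, the a.e.\ pointwise bounds on $|\Phi^*\omega|$ and $|\det D\Phi|$. This rests on Rademacher differentiability of $\Phi$ and $\Phi^{-1}$, the area/coarea machinery for Lipschitz maps, and the observation that the norms $|\cdot|$ on forms are defined using the $L^\infty_{loc}$ metrics $g$, which only changes the constants by bounded factors absorbed into $C$; I would cite the standard biLipschitz invariance of $L^2$-forms (as in the Teleman/Lipschitz-manifold literature already referenced) rather than reprove it. Everything else — the sheaf-free algebraic identity $\Phi^* d = d\Phi^*$ coming from the test-form definition, and the arithmetic of exponents — is routine.
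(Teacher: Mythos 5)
Your proposal is correct and follows essentially the same route as the paper's proof: use the pointwise metric comparison $C^{-2}g_1 \le \Phi^*g_2 \le C^2 g_1$ to get $L^2$-norm bounds for $p$-forms and for volume, check $\Phi^* d = d\Phi^*$ from the test-form definition of $d$, note that $\Phi^*$ preserves $\Ker(d)$ and hence descends to the quotient, and compare Rayleigh quotients. The one step you gesture at but do not spell out is that the $L^2$-norm bound for forms must be promoted to the quotient norm on $\Omega^p_{L^2}/\Ker(d)$ by infimizing over $\Ker(d)$ and using the fact that $\Phi^*$ takes $\Ker(d_{X_2})$ bijectively onto $\Ker(d_{X_1})$ — this is exactly the computation in the paper's equation (\ref{4.7}); since you state the kernel isomorphism, your argument closes the same way.
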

\begin{proof}
By reducing $X_1^*$ and $X_2^*$ if necessary,
we can assume that the biLipschitz map $h : X_1 \rightarrow X_2$ restricts
to a biLipschitz map between $X_1^*$ and $X_2^*$. Then on $X_1^*$, we have
$C^{-2} g_1 \le h^* g_2 \le C^2 g_1$.
There is a bounded pullback map $h^* : \Omega^*_{L^2}(X_2) \rightarrow
\Omega^*_{L^2}(X_1)$. One can check that
$h^*$ sends $\Omega^*_{L^2,d}(X_2)$ to $\Omega^*_{L^2,d}(X_1)$, with
$h^*d=dh^*$. Hence there is also a pullback
$h^* :  \Omega^p_{L^2,d}(X_2)/\Ker(d_{X_2}) \rightarrow
\Omega^p_{L^2,d}(X_1)/\Ker(d_{X_1})$.

Given
a $k$-dimensional subspace $V_{k,p}$ of
$\Omega^p_{L^2,d}(X_2)/\Ker(d_{X_2})$ and a nonzero element
$\widetilde{\psi} \in V_{k,p}$,
let ${\psi} \in \Omega^p_{L^2,d}(X_2)$ be a representative for
$\widetilde{\psi}$. Then
\begin{align} \label{4.6}
  \| d h^* \widetilde{\psi} \|^2 = 
\int_{X_1^*} \langle d h^* \psi, d h^* \psi \rangle_{g_1} \dvol_{X_1^*} = &
\int_{X_1^*} \langle h^* d\psi, h^* d\psi \rangle_{g_1} \dvol_{X_1^*} \\
\le & \: C^{2(p+1)} C^n
\int_{X_1^*} \langle h^* d\psi, h^* d\psi \rangle_{h^* g_2} h^* \dvol_{X_2^*}
\notag \\
= & \: C^{2(p+1)+n}
\int_{X_2^*} \langle d\psi, d\psi \rangle_{g_2} \dvol_{X_2^*} \notag \\
= &  C^{2(p+1)+n}  \| d \widetilde{\psi} \|^2 \notag
\end{align}
and
\begin{align} \label{4.7}
  \| h^* \widetilde{\psi} \|^2 = & \inf_{\tau \in \Ker(d_{X_1})}
  \| h^* {\psi} +  \tau \|_{L^2}^2 
  = \inf_{\sigma \in \Ker(d_{X_2})}
  \| h^* ({\psi} + \sigma) \|_{L^2}^2 \\
  = &  \inf_{\sigma \in \Ker(d_{X_2})} 
  \int_{X_1^*} \langle h^* ({\psi} + \sigma),
  h^* ({\psi} + \sigma) \rangle_{g_1} \dvol_{X_1^*} \notag \\
    \ge &
    \: C^{-2p} C^{-n}
    \inf_{\sigma \in \Ker(d_{X_2})}
    \int_{X_1^*} \langle h^* ({\psi} + \sigma), h^*
    ({\psi} + \sigma) \rangle_{h^* g_2} h^* \dvol_{X_2^*}  \notag \\
= & \: C^{-2p-n}
    \inf_{\sigma \in \Ker(d_{X_2})}
    \int_{X_2^*} \langle {\psi} + \sigma,
    {\psi} + \sigma \rangle_{g_2} \dvol_{X_2^*}
= C^{-2p-n} \| \widetilde{\psi} \|^2. \notag
\end{align}
It follows that
\begin{equation} \label{4.8}
C^{-2n-4p-2} \lambda_{k,p}(X_1) \le \lambda_{k,p}(X_2).
\end{equation}
Reversing the roles of $X_1$ and $X_2$ gives (\ref{4.4}).
This proves the lemma.
\end{proof}

We now prove a version of Theorem \ref{thm1.7} for compact Alexandrov spaces.

\begin{proposition} \label{prop4.9}
Given $n \in \Z^+$, $K \in \R$ and $v > 0$, there is some
$L = L(n,K,v) < \infty$
so that for any $n$-dimensional compact connected
Alexandrov space $X$ for which
\begin{enumerate}
\item the curvature of $X$ is bounded below by $K$,
\item $\diam(X) \le 1$ and
\item $\vol(X) \ge v$,
\end{enumerate}
and any $p \in [0,n-1]$ and $k \in \Z^+$, 
we have $\lambda_{k,p}(X) \le L k^{\frac{2}{n}}$.
Here $\lambda_{k,p}(X)$ is defined by (\ref{4.1}).
\end{proposition}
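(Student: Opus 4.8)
The plan is to prove Proposition \ref{prop4.9} by a compactness-and-contradiction argument, combined with an explicit pullback-of-forms construction on a good ball. Suppose the conclusion fails. Then for some fixed $n, K, v$ and some fixed $p \in [0,n-1]$, there is a sequence of Alexandrov spaces $X_i$ satisfying (1)--(3), together with integers $k_i$, such that $\lambda_{k_i,p}(X_i) > i \cdot k_i^{2/n}$. By Gromov's precompactness theorem, after passing to a subsequence we may assume $X_i \to X_\infty$ in the Gromov-Hausdorff topology, where $X_\infty$ is a compact connected Alexandrov space of curvature $\ge K$, diameter $\le 1$, and (by lower semicontinuity of volume under noncollapsing, together with the volume bound $\vol(X_\infty) \ge v > 0$) of the same Hausdorff dimension $n$. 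By the stability theorem (Perelman), for large $i$ there is a biLipschitz homeomorphism $h_i : X_\infty \to X_i$, with biLipschitz constants $C_i \to 1$. By Lemma \ref{lem4.3}, $\lambda_{k_i,p}(X_i) \ge C_i^{-2n-4p-2} \lambda_{k_i,p}(X_\infty)$, so it suffices to produce, on the single fixed space $X_\infty$, a uniform bound $\lambda_{k,p}(X_\infty) \le L' k^{2/n}$ for all $k$, since this contradicts $\lambda_{k_i,p}(X_i) > i\, k_i^{2/n}$ once $i$ is large enough that $C_i^{-2n-4p-2} L' < i$ (note $k_i \ge 1$ so $k_i^{2/n} \le k_i^{2/n}$ trivially; one uses $\lambda_{k_i,p}(X_i) / k_i^{2/n} > i \to \infty$).

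So the real content is: on a fixed $n$-dimensional compact Alexandrov space $X_\infty$, one has $\lambda_{k,p}(X_\infty) = O(k^{2/n})$. To get this, first fix a point $x_0 \in X_\infty$ in the top stratum $X_\infty^*$, and choose a small ball $B = B(x_0, r) \subset X_\infty^*$ on which the Riemannian metric $g$ is, say, $2$-biLipschitz to the flat metric in a fixed coordinate chart $\Phi : B \to \R^n$. For a large parameter $N$, subdivide $\Phi(B)$ into roughly $N^n$ congruent small cubes, and on each small cube $Q$ supported deep inside choose a bump function: more precisely, for each multi-index, transplant from $\R^n$ a $p$-form of the shape $\chi_Q(x)\, dx^{i_1}\wedge\cdots\wedge dx^{i_p}$, where $\chi_Q$ is a fixed Lipschitz bump adapted to $Q$, scaled to $Q$. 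Actually, to land in $\Omega^p_{L^2,d}(X_\infty)/\Ker(d)$ one should instead use exact forms: take functions $u_Q$ obtained by scaling a fixed Lipschitz bump to each small cube, form the $0$-forms $u_Q$ (which lie in $\Omega^0_{L^2,d}$ since they are Lipschitz with compact support in $X_\infty^*$), and if $p \ge 1$ wedge with fixed coordinate differentials as in Lemma \ref{unique}'s constructions so as to produce genuine elements of $\Omega^p_{L^2,d}(X_\infty)$. Their differentials are $du_Q \wedge (\text{fixed bounded form})$, supported in $Q$, with $\|d(\cdot)\|^2 / \|\cdot\|^2 \lesssim (N/r)^2 = O(N^2)$, since the bump rescales by $N/r$ in each direction. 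These $\sim c N^n$ test forms have pairwise disjoint supports, hence span a subspace of dimension $\sim c N^n$ on which the Rayleigh quotient is $\le C N^2$. Choosing $N$ so that $c N^n \ge k$, i.e. $N \sim (k/c)^{1/n}$, yields $\lambda_{k,p}(X_\infty) \le C N^2 = C' k^{2/n}$, with $C'$ depending only on $n$, $r$, and the biLipschitz constant of the chart — all controlled by $n, K, v$ via the noncollapsing (the injectivity-radius-type lower bound on how big a good Euclidean ball one can find in a noncollapsed Alexandrov space).

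A few technical points must be checked. One needs disjoint-support test $p$-forms that actually represent \emph{nonzero} classes in $\Omega^p_{L^2,d}(X_\infty)/\Ker(d)$ and remain linearly independent there: for $p = 0$ this is automatic since the only closed (locally constant, hence globally constant on the connected $X_\infty$) $0$-form in $\Omega^0_{L^2,d}$ is $0$ (it must vanish by the boundary/support condition), and a nonzero sum of the $u_Q$ is nonzero; for $p \ge 1$ one instead works with the forms $\omega_Q = u_Q\, d(\phi x^{i_1})\cdots d(\phi x^{i_p})$ of Lemma \ref{unique}, whose $\rho$-images $u_Q\, dx^{i_1}\wedge\cdots\wedge dx^{i_p}$ are linearly independent in $L^2$ modulo the \emph{closed} forms because one can pair against compactly supported forms dual to $du_Q$ — or, more simply, one can just bound $\lambda_{k,p}$ by the analogous min-max over all of $\Omega^p_{L^2,d}(X_\infty)$ modulo $\Ker(d)$ and observe that disjoint-support forms of distinct types are automatically independent mod closed forms. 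The main obstacle is precisely this: ensuring the transplanted $p$-forms descend to an honest $k$-dimensional subspace of $\Omega^p_{L^2,d}(X_\infty)/\Ker(d)$ with the claimed Rayleigh bound, i.e. controlling $\|\omega\|_{L^2/\Ker(d)}$ from below (one must show the infimum over $\Ker(d)$ does not kill the norm). Here one uses that the supports lie in a tiny ball $B$, so any closed form can only help on $B$; an elementary argument with the $L^2$-orthogonal projection shows $\|\widetilde{\omega_Q}\|^2 \ge \tfrac12 \|\omega_Q\|^2$ for the bumps once $B$ is small relative to how exactness propagates, or alternatively one replaces $\Ker(d)$ by $\overline{\Image(d)}$ and notes that for $0$-forms $\Ker(d) = 0$ outright while for $p\ge 1$ the same disjointness trick applies on the level of the quotient by $\overline{\Image(d)}$. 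Apart from this bookkeeping, everything else is routine transplantation and the Gromov-Hausdorff/stability machinery quoted above.
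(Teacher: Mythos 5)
The strategy matches the paper's in outline --- contradiction, Gromov-Hausdorff limit, cube-scaled test forms with disjoint supports, Rayleigh quotient bookkeeping --- but there is a genuine gap in the reduction step.

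You reduce the problem to the single fixed limit space $X_\infty$ by invoking ``the stability theorem (Perelman), for large $i$ there is a biLipschitz homeomorphism $h_i : X_\infty \to X_i$, with biLipschitz constants $C_i \to 1$,'' and then applying Lemma \ref{lem4.3}.  This is not available. Perelman's stability theorem produces \emph{homeomorphisms}, not biLipschitz homeomorphisms, and the biLipschitz strengthening is a well-known open problem. Indeed the paper itself flags related biLipschitz regularity statements for Alexandrov spaces as only conjectural (see the discussion preceding Theorem \ref{thm1.16} and Remark \ref{rem1.17}). Once the global biLipschitz step is removed, you cannot transport a Rayleigh-quotient bound on $X_\infty$ back to $X_i$ via Lemma \ref{lem4.3}. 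What \emph{is} available, and what the paper uses instead, is the local statement \cite[Theorem~9.8]{Burago-Gromov-Perelman (1992)}: a neighborhood of a regular point in $X_\infty$ is uniformly biLipschitz to neighborhoods in $X_i$. The construction then has to take place on $X_i$ itself by pulling back through these local maps $h_i : U_i \to (0,1)^n$, not on $X_\infty$ alone. Your test-form construction near a regular point is compatible with this, so the fix is structural rather than conceptual, but as written the proof rests on a nonexistent theorem.

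A secondary issue: the lower bound for $\|\widetilde{\omega}_Q\|$ in $\Omega^p_{L^2}/\Ker(d)$ is not justified. You assert ``an elementary argument with the $L^2$-orthogonal projection shows $\|\widetilde{\omega_Q}\|^2 \ge \tfrac12 \|\omega_Q\|^2$ once $B$ is small relative to how exactness propagates,'' but no such smallness mechanism is identified, and the suggested ``alternatively, replace $\Ker(d)$ by $\overline{\Image(d)}$'' changes the quantity being bounded (recall $\lambda_{k,p}$ in (\ref{4.1}) is defined over $\Omega^p_{L^2,d}(X)/\Ker(d)$). The clean way, as in the paper, is to fix a model form $\psi$ on $(0,1)^n$ with $d\psi \neq 0$, verify $\psi \notin \Ker(d_{(0,1)^n})$ (Lemma \ref{morelem}) so that $N_\psi > 0$, and track how $N_\psi$ rescales; the restriction of any $\sigma \in \Ker(d_{X_i})$ pulls back into $\Ker(d_{(0,1)^n})$ and then localizes into each $\Ker(d_{B_j})$, giving the box-by-box lower bound with no hidden smallness assumption. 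Also, for a closed Alexandrov space, constants do lie in $\Ker(d)$ on $0$-forms (the Dirichlet-type condition in Example \ref{ex2.3} concerns $\partial X$, not the singular set), so your parenthetical claim that ``the only closed $0$-form in $\Omega^0_{L^2,d}$ is $0$'' is incorrect in the boundaryless case; fortunately it is not needed, because disjoint supports of the $d\psi_j$ already give linear independence in the quotient.
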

\begin{proof}
Suppose that the claim about $\lambda_{k,p}$ is not true.  Then there is a
sequence $\{X_i\}_{i=1}^\infty$ of $n$-dimensional Alexandrov spaces
and some $p \in [0, n-1]$
so that
\begin{enumerate}
\item the curvature of $X_i$ is bounded below by $K$,
\item $\diam(X_i) \le 1$ and
\item $\vol(X_i) \ge v$, but
\item $\lambda_{k_i,p}(X_i) \ge i k_i^{\frac{2}{n}}$
  for some $k_i \in \Z^+$.
\end{enumerate}

Let $c_i$ be the smallest integer such that $k_i \le 2^{nc_i}$.
Then
\begin{equation}
  \lambda_{2^{nc_i},p}(X_i) \ge
  \lambda_{k_i,p}(X_i) \ge i  k_i^{\frac{2}{n}} \ge
  i  \left( 2^{n(c_i-1)} \right)^{\frac{2}{n}} = \frac{i}{4} 4^{c_i}. 
  \end{equation}
Putting $\overline{k}_i = 2^{nc_i}$, we have
$\lambda_{\overline{k}_i,p}(X_i)
\ge \frac{1}{4} i  \overline{k}_i^{\frac{2}{n}}$.

After passing to a subsequence, we can assume that
$\lim_{i \rightarrow \infty} X_i = X_\infty$ in the Gromov-Hausdorff
topology, where $X_\infty$ is also an $n$-dimensional
Alexandrov space. Let $x_\infty$ be a regular point of $X_\infty$.
Then there is a neighborhood $U_\infty$ of $x_\infty$,
that is biLipschitz homeomorphic to $(0,1)^n \subset
\R^n$, such that for large $i$,
there are
\begin{itemize}
\item points $x_i \in X_i$,
\item neighborhoods $U_i$ of $x_i$, and
\item bijective maps $\sigma_i : U_i \rightarrow U_\infty$
\end{itemize}
that are uniformly biLipschitz 
\cite[Theorem 9.8]{Burago-Gromov-Perelman (1992)}.

Letting $h_i$ be the composition of $\sigma_i$ with the biLipschitz
homeomorphism from $U_\infty$ to $(0,1)^n$, the maps
$h_i : U_i \rightarrow (0,1)^n$ are $\Lambda$-biLipschitz for
some $\Lambda < \infty$ independent of $i$.

Let $\psi$ be a smooth compactly supported $p$-form on
$(0,1)^n$ with $\int_{(0,1)^n} |\psi|^2 \: d^nx = 1$ and
$d\psi \neq 0$. Put $E_\psi = \int_{(0,1)^n} |d\psi|^2 \: d^nx$.

Let $\Ker(d_{(0,1)^n})$ be the forms
$\mu \in \Omega^{p}_{L^2}((0,1)^n)$ such that
\begin{equation} \label{newint}
\int_{(0,1)^n} d\omega^\prime \wedge \mu = 0
\end{equation}
for all compactly supported
$\omega^\prime \in \Omega^{n-p-1}_{\Lip}((0,1)^n)$.
Then $\Ker(d_{(0,1)^n})$ is closed in $\Omega^{p}_{L^2}((0,1)^n)$.

\begin{lemma} \label{morelem}
  $\psi$ does not lie in $\Ker(d_{(0,1)^n})$.
  \end{lemma}
\begin{proof}
  By assumption, $d\psi \neq 0$ as a smooth form. Hence we can find some
  $\omega^\prime \in \Omega^{n-p-1}_{\Lip}((0,1)^n)$ so that
  $\int_{(0,1)^n} d\omega^\prime \wedge \psi = (-1)^{n-p}
  \int_{(0,1)^n} \omega^\prime \wedge d\psi\neq 0$.
\end{proof}

With reference to Lemma \ref{morelem},
let $N_\psi$ be the square of the norm of the image of $\psi$ in
$\Omega^p_{L^2}((0,1)^n)/\Ker(d_{(0,1)^n})$.

Let $R_i : (0, 2^{-c_i})^n \rightarrow (0,1)^n$ be multiplication by
$2^{c_i}$. Under rescaling,
$\int_{(0, 2^{-c_i})^n} |dR_i^* \psi|^2 \: dx^n =
2^{c_i(2p+2-n)} E_\psi$ and the square norm of $R_i^* \psi$ in
$\Omega^p_{L^2}((0, 2^{-c_i})^n)/\Ker(d)$ is
$2^{c_i(2p-n)} N_\psi$.

There are $\overline{k}_i = 2^{nc_i}$ disjoint boxes
$\{B_j\}_{j=1}^{\overline{k}_i}$ in $(0,1)^n$, each congruent to
$(0, 2^{-c_i})^n$.
Let $\psi_j$ be the translate of
$R_i^* \psi$ to $B_j$.
Let $V^\infty_i$ be the span of
$\{\psi_j\}_{j=1}^{\overline{k}_i}$ in $\Omega^p_{L^2}((0,1)^n)$,
let ${h_i^* V^\infty_i}$ denote the extension by zero 
from $\Omega^p_{L^2,d}(U_i)$ to 
$\Omega^p_{L^2,d}(X_i)$ of the pullback, and
let $\widetilde{h_i^* V^\infty_i}$ denote the 
image of $h_i^* V^\infty_i$ in 
$\Omega^p_{L^2,d}(X_i)/\Ker(d_{X_i})$. We claim that
$\widetilde{h_i^* V^\infty_i}$ is $\overline{k}_i$-dimensional.
To see this, if there is a relation
$\sum_{j=1}^{\overline{k}_i} \alpha_j h_i^* \psi_j \in \Ker(d_{X_i})$
then
\begin{equation}
h_i^* \left( \sum_{j=1}^{\overline{k}_i} \alpha_j d \psi_j \right) =
d \left( \sum_{j=1}^{\overline{k}_i} \alpha_j h_i^* \psi_j \right) = 0,
\end{equation}
and hence
$\sum_{j=1}^{\overline{k}_i} \alpha_j d \psi_j = 0$, which implies that each $\alpha_j$
vanishes.

If $\eta_i = \sum_{j=1}^{\overline{k}_i} \beta_j h_i^* \psi_j$ is a
nonzero element of $h_i^* V^\infty_i$ then
as in the proof of (\ref{4.6}),
for large $i$, we have
\begin{align} \label{new1}
  \int_{X_i^*} \langle d\eta_i, d\eta_i \rangle_{g_i} \dvol_{X_i^*} \le &
  \Lambda^{2p+2+n} \sum_{j=1}^{\overline{k}_i} |\beta_j|^2
  \int_{(0,1)^n} \langle d\psi_j, d\psi_j \rangle_{(0,1)^n}
  \dvol_{(0,1)^n} \\
  = & 2^{c_i(2p+2-n)} E_\psi
  \Lambda^{2p+2+n} \sum_{j=1}^{\overline{k}_i} |\beta_j|^2. \notag
\end{align}

Let $\widetilde{\eta}_i$ be the class of $\eta_i$ in
$\widetilde{h_i^* V^\infty_i}$. 
We claim that
\begin{equation} \label{new2}
\| \widetilde{\eta}_i \|^2 \ge 
\frac14 2^{c_i(2p-n)} N_\psi \Lambda^{-2p-n}
\sum_{j=1}^{\overline{k}_i} |\beta_j|^2.
\end{equation}
To see this, suppose that
$\| \widetilde{\eta}_i \|^2 < \frac14 2^{c_i(2p-n)} N_\psi \Lambda^{-2p-n}
  \sum_{j=1}^{\overline{k}_i} |\beta_j|^2$. Then there is some
$\sigma \in \Ker(d_{X_i})$ so that
\begin{equation}
  \int_{X_i^*} \langle \eta_i + \sigma, \eta_i + \sigma \rangle_{g_i}
  \: \dvol_{X_i^*} < \frac12 2^{c_i(2p-n)} N_\psi \Lambda^{-2p-n}
  \sum_{j=1}^{\overline{k}_i} |\beta_j|^2
\end{equation}
As in (\ref{4.7}), it follows that
\begin{equation}
  \int_{(0,1)^n} \left\langle \sum_{j=1}^{\overline{k}_i}
  \beta_j \psi_j + (h_i^{-1})^* \sigma,
  \sum_{j=1}^{\overline{k}_i}
\beta_j \psi_j + (h_i^{-1})^* \sigma \right\rangle 
  \: \dvol_{(0,1)^n} < \frac12 2^{c_i(2p-n)} N_\psi 
  \sum_{j=1}^{\overline{k}_i} |\beta_j|^2
\end{equation}
However,
\begin{align}
&  \int_{(0,1)^n} \left\langle \sum_{j=1}^{\overline{k}_i}
  \beta_j \psi_j + (h_i^{-1})^* \sigma,
  \sum_{j=1}^{\overline{k}_i}
\beta_j \psi_j + (h_i^{-1})^* \sigma \right\rangle 
\: \dvol_{(0,1)^n} = \\
&  \sum_{j=1}^{\overline{k}_i}
  \int_{B_j} \left\langle
  \beta_j \psi_j + (h_i^{-1})^* \sigma,
  \beta_j \psi_j + (h_i^{-1})^* \sigma \right\rangle 
  \: \dvol_{B_j} \ge 2^{c_i(2p-n)} N_\psi
  \sum_{j=1}^{\overline{k}_i} |\beta_j|^2 . \notag
\end{align}
This is a contradiction.

Combining (\ref{new1}) and (\ref{new2}) gives
\begin{equation}
  \lambda_{\overline{k}_i,p}(X_i) \le 4 \cdot 2^{2c_i} \Lambda^{4p+2n+2} E_\psi
  N_{\psi}^{-1} =  4 \Lambda^{4p+2n+2} E_\psi
  N_{\psi}^{-1} \overline{k}_i^{\frac{2}{n}}.
\end{equation}
For large $i$, this contradicts the fact that
$\lambda_{\overline{k}_i,p}(X_i)
\ge \frac{1}{4} i  \overline{k}_i^{\frac{2}{n}}$.
This proves the proposition.
\end{proof}

\begin{corollary} \label{cor4.12}
Given $n \in \Z^+$, $K \in \R$ and $v > 0$, there is some
$L = L(n,K,v) < \infty$ with the following property.
Let $M$ be an $n$-dimensional compact connected
Riemannian manifold-with-boundary
whose boundary, if nonempty, is convex.
Suppose that
\begin{enumerate}
\item the sectional curvature of $M$ is bounded below by $K$,
\item $\diam(M) \le 1$ and
\item $\vol(M) \ge v$.
\end{enumerate}
Then
for any $p \in [0,n]$ and any $k \in \Z^+$,
the $k^{th}$ positive eigenvalue of the $p$-form Laplacian on $M$,
as defined with relative boundary conditions, satisfies
$\lambda_{k,p}(M) \le L k^{\frac{2}{n}}$.
\end{corollary}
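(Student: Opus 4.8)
The plan is to apply Proposition~\ref{prop4.9} directly to $X=M$ and then to convert the resulting minmax estimate into an estimate for the classical Hodge Laplacian. The first step is to observe that $M$, with its Riemannian distance function, is itself a compact connected Alexandrov space with curvature bounded below by $K$ and Hausdorff dimension $n$. When $\partial M=\emptyset$ this is classical; in general, the convexity of $\partial M$ is exactly the condition under which Toponogov comparison persists (for instance, the metric double $DM$ is an Alexandrov space with curvature $\ge K$, and $M$ sits inside $DM$ as a convex subset, hence is itself an Alexandrov space with the same curvature bound). Taking $X^*=\Int(M)$, this places $X=M$ into the framework of Section~\ref{sec2}, and hypotheses (1)--(3) of Proposition~\ref{prop4.9} hold verbatim. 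Therefore, for every $p\in[0,n-1]$ and every $k\in\Z^+$,
\begin{equation}
  \lambda_{k,p}(M)\le L(n,K,v)\,k^{\frac{2}{n}},
\end{equation}
where $\lambda_{k,p}(M)$ denotes the minmax quantity of (\ref{4.1}).

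The second step identifies this minmax quantity with a genuine eigenvalue and relates it to the full Hodge Laplacian. Since the Hodge Laplacian of a compact manifold-with-boundary has discrete spectrum, \cite[Theorem~XIII.2]{Reed-Simon (1978)} shows that $\lambda_{k,p}(M)$ is the $k^{th}$ positive eigenvalue of $\triangle_p$, which by Example~\ref{ex2.19} is the Hodge Laplacian on $\Omega^p_{L^2}(M)/\overline{\Image(d)}$ with relative (Dirichlet) boundary conditions. To pass to $dd^*+d^*d$ on $\Omega^p(M)$ itself I would use the Hodge decomposition $\Omega^p_{L^2}(M)\cong{\mathcal H}^p\oplus\overline{\Image(d)}_p\oplus\overline{\Image(d^*)}_p$ with relative boundary conditions, together with the Laplacian-intertwining isomorphisms between $\overline{\Image(d^*)}_{p-1}$ and $\overline{\Image(d)}_p$ furnished by $d$ and $d^*$; this is the ``multiplicities related by a factor of at most two'' assertion made after (\ref{1.14}). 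Concretely, the positive spectrum of $dd^*+d^*d$ on $\Omega^p(M)$ with relative boundary conditions is the union, counted with multiplicity, of the positive spectrum of $\triangle_p$ on $\Omega^p_{L^2}(M)/\overline{\Image(d)}$ and the positive spectrum of $\triangle_{p-1}$ on $\Omega^{p-1}_{L^2}(M)/\overline{\Image(d)}$; here $\triangle_{-1}$ contributes nothing since $\Omega^{-1}=0$, and $\triangle_n$ contributes nothing since every $L^2$ top-degree form satisfying relative boundary conditions is closed. Hence the $k^{th}$ positive eigenvalue of $dd^*+d^*d$ on $\Omega^p(M)$ is at most $\min\{\lambda_{k,p}(M),\lambda_{k,p-1}(M)\}$, reading a nonexistent term as $+\infty$.

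Finally, for every $p\in[0,n]$ at least one of the indices $p-1,p$ lies in $[0,n-1]$, so combining the last inequality with the bound from the first step yields $\lambda_{k,p}(M)\le L(n,K,v)\,k^{\frac{2}{n}}$ for the classical $p$-form Laplacian with relative boundary conditions, for all $k\in\Z^+$; this is the corollary. I expect the only step carrying real content to be the first one --- that a Riemannian manifold with convex boundary is an Alexandrov space with the same lower curvature bound --- everything else being a formal consequence of Proposition~\ref{prop4.9} and Example~\ref{ex2.19}, once one checks that the two endpoint degrees $p=0$ and $p=n$ are subsumed (they are, since $\Omega^{-1}=0$ and top-degree forms are closed) and, when $M$ is non-orientable, works with the orientation double cover as in Section~\ref{sec2}.
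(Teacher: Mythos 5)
Your proposal is correct and follows the paper's own (very terse) argument: apply Proposition \ref{prop4.9} to $M$, which the paper implicitly treats as an Alexandrov space, and then pass to the full Hodge Laplacian via the Hodge decomposition, using Example \ref{ex2.19} to identify $\triangle_p$ with the relative-boundary-condition operator. You supply the details the paper omits --- in particular the doubling argument showing that $M$ with convex boundary has curvature $\ge K$ in the Alexandrov sense, and the explicit bookkeeping at $p=0$ and $p=n$ --- but the route is the same.
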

\begin{proof}
  Proposition \ref{prop4.9} gives upper bounds on the eigenvalues
  of the Laplacian on $\Omega^{p}_{L^2}(M)/\Ker(d)$ for all $p \in [0, n-1]$.
  From the Hodge decomposition, we also get an upper bound on the
  eigenvalues of the Laplacian on $\Image(d) \subset \Omega^n_{L^2}(M)$.
  The corollary follows.
\end{proof}

Theorem \ref{thm1.7} follows from Corollary \ref{cor4.12}.

\subsection{General case} \label{subsec4.2}

We now prove Theorem \ref{thm1.3}.

\begin{proposition} \label{prop4.13}
Given $n \in \Z^+$ and $K \in \R$, there is some
$A = A(n,K) < \infty$ with the following property.
Let $M$ be an
$n$-dimensional closed connected Riemannian manifold for which
\begin{enumerate}
\item the sectional curvature of $M$ is bounded below by $K$ and
\item there is some point $m \in M$ with an $s$-strainer of quality
  $\frac{1}{10}$ and size $1$, where $1 \le s \le n$.
\end{enumerate}
Then for any $p \in [0,s-1]$ and $k \in \Z^+$, we have
$\lambda_{k,p}(M) \le A k^{\frac{2}{s}}$.
Here $\lambda_{k,p}(M)$ is the $k^{th}$ eigenvalue
of the Laplacian on $\Omega^p_{L^2}(M)/\Ker(d)$.
\end{proposition}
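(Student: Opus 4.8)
The plan is to argue by contradiction, exactly in the spirit of Proposition \ref{prop4.9}, but now extracting an almost-Euclidean region from a possibly lower-dimensional Gromov--Hausdorff limit and using a fibration theorem to transplant the test forms back up to the manifolds $M_i$. Suppose the bound $\lambda_{k,p}(M) \le A k^{\frac{2}{s}}$ fails for every $A$. Then there is a fixed $p \in [0,s-1]$ and a sequence $\{M_i\}_{i=1}^\infty$ of $n$-dimensional closed Riemannian manifolds with sectional curvature $\ge K$, each carrying an $s$-strainer of quality $\frac{1}{10}$ and size $1$ at some point $m_i$, and integers $k_i \in \Z^+$ with $\lambda_{k_i,p}(M_i) \ge i \, k_i^{\frac{2}{s}}$. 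As in Proposition \ref{prop4.9}, replacing $k_i$ by $\overline{k}_i = 2^{s c_i}$ with $c_i$ minimal such that $k_i \le 2^{s c_i}$, we may assume $\lambda_{\overline{k}_i,p}(M_i) \ge \frac14 i \, \overline{k}_i^{\frac{2}{s}}$. After passing to a subsequence we may assume $(M_i, m_i) \to (X_\infty, x_\infty)$ in the pointed Gromov--Hausdorff topology, where $X_\infty$ is an Alexandrov space with curvature $\ge K$. Since an $s$-strainer of quality $\frac{1}{10}$ and size $1$ at $m_i$ passes to the limit (the comparison-angle conditions are closed), $X_\infty$ carries an $s$-strainer of quality roughly $\frac{1}{10}$ and comparable size at $x_\infty$; in particular $\dim X_\infty \ge s$, and near $x_\infty$ the strainer map $X_\infty \to \R^s$ given by $x \mapsto (d(a_i,x))_{i=1}^s$ is biLipschitz onto an open subset of $\R^s$.

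Next I would invoke the fibration theory for manifolds collapsing to Alexandrov spaces. Because $x_\infty$ admits an $s$-strainer, there is a ball $U_\infty \ni x_\infty$ biLipschitz homeomorphic to $(0,1)^s$; the corresponding strainer functions on $M_i$ (the distances to the strainer points, suitably mollified) give, for large $i$, a map $f_i : V_i \to (0,1)^s$ from a neighborhood $V_i \subset M_i$ of a point $m_i' \to x_\infty$ that is a fibration with fiber a closed manifold of dimension $n-s$, and which is $\Lambda$-biLipschitz in the horizontal directions with $\Lambda < \infty$ independent of $i$ (this is the quantitative strainer/fibration statement of Burago--Gromov--Perelman and Yamaguchi; cf.\ \cite{Burago-Gromov-Perelman (1992)}). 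Fix a smooth compactly supported $p$-form $\psi$ on $(0,1)^s$ with $\int |\psi|^2 = 1$ and $d\psi \ne 0$, and let $\psi^{\flat}$ denote $f_i^* \psi$, a $p$-form on $V_i$. As in Lemma \ref{morelem}, $\psi$ does not lie in $\Ker(d_{(0,1)^s})$, so it has positive norm $N_\psi$ modulo $\Ker(d)$, and we set $E_\psi = \int_{(0,1)^s}|d\psi|^2$.

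Now I run the same box-partition and rescaling construction as in Proposition \ref{prop4.9}, but one dimension down: partition $(0,1)^s$ into $\overline{k}_i = 2^{sc_i}$ congruent subcubes of side $2^{-c_i}$, let $\psi_j$ be the rescaled-and-translated copies of $\psi$ supported in these subcubes, pull them back by $f_i$ to forms $f_i^*\psi_j$ on $V_i$, and extend by zero to $\Omega^p_{L^2,d}(M_i)$. The key point is that $f_i^*$ intertwines $d$ with $d$ and is bounded with bounds depending only on $\Lambda$ and $n$ (not on the collapsing direction, since $f_i^*\psi$ is horizontal and the fiber volume enters as an overall positive factor that is uniformly bounded above and below after the rescaling is accounted for — this is exactly where the horizontal biLipschitz control and the uniform fiber-volume control are used). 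Thus the span $\widetilde{f_i^* V_i^\infty}$ of the $f_i^*\psi_j$ in $\Omega^p_{L^2,d}(M_i)/\Ker(d_{M_i})$ is $\overline{k}_i$-dimensional, and the same two estimates as in (\ref{new1}) and (\ref{new2}) — the upper bound on $\|d\eta_i\|^2$ from the disjointness of supports, and the lower bound on $\|\widetilde\eta_i\|^2$ using that any $\Ker(d)$-representative restricts fiberwise to something orthogonal to each $\psi_j$ on its subcube — yield $\lambda_{\overline{k}_i,p}(M_i) \le C(n,\Lambda) \, E_\psi N_\psi^{-1} \, \overline{k}_i^{\frac{2}{s}}$, contradicting $\lambda_{\overline{k}_i,p}(M_i) \ge \frac14 i \, \overline{k}_i^{\frac{2}{s}}$ for large $i$.

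The main obstacle, and the place where real work is needed beyond Proposition \ref{prop4.9}, is establishing the fibration structure with uniform horizontal biLipschitz control and, crucially, the uniform two-sided bound on fiber volumes over the region $V_i$ — equivalently, controlling how $f_i^*$ and its adjoint behave on $L^2$-forms independently of $i$. One must check that the rescaling $R_i$ (multiplication by $2^{c_i}$ on the base cube) interacts correctly with the fiber directions: the Jacobian of $f_i$ transverse to the fibers is pinched between constants times $\Lambda^{\pm s}$, and the fiber volume is pinched between constants independent of $i$, so the ratio $\|d\eta_i\|^2/\|\widetilde\eta_i\|^2$ scales precisely by $2^{2c_i} = \overline{k}_i^{2/s}$ up to a bounded factor. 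Once this quantitative fibration input is in hand — which is standard from the theory in \cite{Burago-Gromov-Perelman (1992)} — the rest is a direct adaptation of the noncollapsing argument.
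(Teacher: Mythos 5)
Your broad strategy—contradiction, pass to a Gromov--Hausdorff limit, fiber the manifolds $M_i$ over an almost-Euclidean region of $X_\infty$, and run the box-partition/rescaling estimate of Proposition~\ref{prop4.9}—is exactly the strategy of the paper. But there is a genuine gap in how you handle the dimension of the limit, and a secondary gap in the lower bound.

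First, you write that near $x_\infty$ ``the strainer map $X_\infty \to \R^s$ is biLipschitz onto an open subset of $\R^s$'' and that $M_i$ fibers over $(0,1)^s$ ``with fiber a closed manifold of dimension $n-s$''. Both claims fail when $n_\infty := \dim X_\infty > s$, which is perfectly possible under the hypotheses (the $s$-strainer only gives $n_\infty \ge s$). A map from an $n_\infty$-dimensional space onto an open subset of $\R^s$ with $n_\infty > s$ cannot be injective, hence not biLipschitz, and the $s$-strainer map $M_i \to \R^s$ then has fibers whose noncollapsing part has dimension $n_\infty - s > 0$; restricted to a bounded neighborhood $V_i$, those fibers are $(n-s)$-manifolds with boundary, not closed manifolds. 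The fiberwise integration and Thom-form machinery you gesture at then break (Stokes on a fiber with boundary leaves a boundary term). The paper's fix is to pass to a regular point of $X_\infty$, find an $n_\infty$-strainer of small size there, and use Yamaguchi's theorem to fiber $M_i$ over an $n_\infty$-dimensional ball with closed fibers $Z_i$ of dimension $n - n_\infty$; the boxes are then $n_\infty$-dimensional (with $\overline{k}_i = 2^{n!c_i}$ so that $\overline{k}_i^{1/n_\infty}$ is an integer for any $n_\infty \le n$), and the final exponent $\frac{2}{n_\infty} \le \frac{2}{s}$ still gives the desired contradiction. Your choice $\overline{k}_i = 2^{sc_i}$ and $s$-dimensional boxes only works if $n_\infty = s$.

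Second, your lower bound on $\|\widetilde{\eta}_i\|^2$—``any $\Ker(d)$-representative restricts fiberwise to something orthogonal to each $\psi_j$ on its subcube''—is not a correct statement and is not what is needed. The paper instead constructs (via gradient flow and a quantitative semiconcavity bound) a closed fiberwise form $\chi_i \in \Omega^{n-n_\infty}_{\Lip}$ with $\int_{h_i^{-1}(u)}\chi_i = 1$ and an $L^2$-bound proportional to $(\inf_u \vol(h_i^{-1}(u)))^{-1}$, proves that the fiberwise average $\nu_i = \int_{Z_i}\chi_i \wedge \sigma_i$ of any closed $\sigma_i$ is itself closed on the base, and then estimates $\|\sum_j\beta_j\psi_j + \nu_i\|_{L^2}^2 \ge N_\psi \sum_j|\beta_j|^2$ using disjointness of the boxes. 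Combined with the two-sided fiber-volume ratio bound (Lemma~\ref{lem4.16}), this is what makes the fiber volume cancel between the upper and lower estimates; the fiber volume itself need not be ``uniformly bounded above and below''—only its ratio is controlled. Without the explicit $\chi_i$ and the ``fiberwise average of a closed form is closed'' lemma, your lower bound does not close.
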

\begin{proof}
  Suppose that the claim about $\lambda_{k,p}$ is not true. 
  Then for
  some $n \in \Z^+$, some $s \in [1,n]$ and some $p \in [0, s-1]$, there is a
sequence $\{M_i\}_{i=1}^\infty$ of $n$-dimensional closed Riemannian
manifolds and numbers $k_i \in \Z^+$ so that
\begin{enumerate}
\item the curvature of $M_i$ is bounded below by $K$ and
\item there is some point $m_i \in M_i$ with an $s$-strainer of
  quality $\frac{1}{10}$ and size $1$, but
\item $\lambda_{k_i,p}(M_i) \ge i k_i^{\frac{2}{s}}$.
\end{enumerate}

Let $c_i$ be the smallest integer such that $k_i \le 2^{n! c_i}$.
Then
\begin{equation}
  \lambda_{2^{n!c_i},p}(M_i) \ge
  \lambda_{k_i,p}(M_i) \ge i  k_i^{\frac{2}{s}} \ge
  i  \left( 2^{n!(c_i-1)} \right)^{\frac{2}{s}}.
  \end{equation}
Putting $\overline{k}_i = 2^{n!c_i}$, we have
$\lambda_{\overline{k}_i,p}(M_i)
\ge \frac{i}{4^{\frac{n!}{s}}}  \overline{k}_i^{\frac{2}{s}}$.

After passing to a subsequence, we can assume that
$\lim_{i \rightarrow \infty} (M_i, m_i) = (X_\infty, x)$ in the
pointed Gromov-Hausdorff
topology, where $X_\infty$ is a complete
Alexandrov space, say of dimension $n_\infty$. From the strainer condition,
$n_\infty \ge s$.
Let $x_\infty$ be a regular point of $X_\infty$, say within
distance $\frac{1}{1000}$ from $x$.

Given $\delta > 0$,
we can find a $n_\infty$-strainer
$\{a_l, b_l\}_{l=1}^{n_\infty}$ of quality $\delta$
around $x_\infty$, say of size
$r_\delta$, with $\lim_{\delta \rightarrow 0} r_\delta = 0$. 

I thank Vitali Kapovitch for the proof of the next lemma.

\begin{lemma}
  If $\delta$ is sufficiently small then for large $i$, the following holds.
  There are an open subset $U_i$ of $M_i$, a
  closed Lipschitz manifold $Z_i$, a Lipschitz surjection
$\tau_i : U_i \rightarrow B(0, \delta r_\delta)$ 
  and a commutative diagram
\begin{equation}
  \begin{array}{ccc}
   \mbox{ } U_i & \stackrel{\alpha_i}{\longrightarrow} & Z_i \times B(0, \delta r_\delta) \\
    \tau_i \big\downarrow & & p_2 \big\downarrow \\
    B(0, \delta r_\delta) & \stackrel{\Id}{\longrightarrow} & B(0, \delta r_\delta),
  \end{array}
  \end{equation}
where $\alpha_i$ is a biLipschitz homeomorphism. Furthermore,
  $\tau_i$ is an almost metric submersion in the sense of
  \cite[p. 318]{Yamaguchi (1991)}.
\end{lemma}
\begin{proof}
Let $\widetilde{m}_i \in M_i$ be such that
$\{\widetilde{m}_i \}_{l=1}^{n_\infty}$
converges to $x_\infty$. Given $\delta > 0$,
for large $i$, let
$\{\widetilde{a}_{i,l}, \widetilde{b}_{i,l} \}_{l=1}^{n_\infty}$ be an
$n_\infty$-strainer of quality $2 \delta$
in $M_i$ such that as $i \rightarrow \infty$, 
$\{\widetilde{a}_{i,l} \}_{l=1}^{n_\infty}$
converges to $\{a_l\}_{l=1}^{n_\infty}$ and
$\{\widetilde{b}_{i,l} \}_{l=1}^{n_\infty}$
converges to $\{b_l\}_{l=1}^{n_\infty}$.
Define ${\gamma}_i : B(\widetilde{m}_i, r_{\delta})
\rightarrow \R^{n_\infty}$ by
${\gamma}_i(p_i) = \{d(\widetilde{m}_i, \widetilde{a}_{i,l}) -
d(p_i, \widetilde{a}_{i,l})
\}_{l=1}^{n_\infty}$.
  
 Consider the pointed Riemannian manifolds
  $\{(M_i, r_\delta^{-2} g_i, \widetilde{m}_i) \}_{i=1}^\infty$.
  The rescaled limit space
  $r_\delta^{-1} X_\infty$ has a strainer of size $1$ and quality
  $\delta$, centered at $x_\infty$.
  There is some $\rho > 0$  so that as $\delta \rightarrow 0$,
  the $\rho$-balls around $x_\infty$ in $r_\delta^{-1} X_\infty$
  converge in the pointed Gromov-Hausdorff topology to $B(0, \rho)
  \subset \R^{n_\infty}$.
  It follows that for any $\epsilon > 0$, there is a $\delta_0 > 0$ so that
if $\delta < \delta_0$ then
  for all large $i$, the map $r_\delta^{-1} \gamma_i : B(\widetilde{m}_i, 
  r_\delta) \rightarrow \R^{n_\infty}$ defines a pointed
  $\epsilon$-Gromov-Hausdorff approximation between
  $(\gamma_i^{-1}(B(0, \rho r_\delta)), \widetilde{m}_i)$ and $(B(0, \rho), 0)$,
  where $\gamma_i^{-1}(B(0, \rho r_\delta))$ has the restricted metric from
  $(M_i, r_\delta^{-2} g_i)$. 
  For an appropriate choice of $\epsilon$ and taking $\delta$ small enough,
  we can apply \cite{Yamaguchi (1991)} in the pointed setting to get the
  Lipschitz fibration, along
  with the almost metric submersion property. (In fact, the fibration in 
\cite{Yamaguchi (1991)} is $C^1$.) As $B(0, \delta r_\delta)$
  is contractible, the fibration structure is a product structure.
\end{proof}

Since $\tau_i$ is Lipschitz and surjective,
for almost all $u \in B(0, \delta r_\delta)$ and almost all
$u_i \in \tau_i^{-1}(u)$, the differential $d\tau_i : T_{u_i} M_i \rightarrow
\R^{n_\infty}$ is defined. Given such a point $u_i \in U_i$, 
put $V_{u_i} = \Ker((d\tau_i)_{u_i})$,
an $(n - n_\infty)$-dimensional subspace of $T_{u_i}U_i$. Put
$H_{u_i} = V_{u_i}^{\perp}$. The ``almost metric submersion''
property implies that if $\delta$ is small enough then for
all large $i$, we have
\begin{equation} \label{4.14}
 \frac12 | v| \le |(d\tau_i)_{u_i} v| \le 2 |v|.
\end{equation}
for all $v \in H_{u_i}$.
It follows that for $\eta \in \Lambda^p(T_{u}^* B(0, \delta r_\delta))$,
we have
\begin{equation} \label{4.15}
2^{-p} |\eta| \le |(d\tau_i)_{u_i}^* \eta| \le 2^{p} |\eta|.
\end{equation}

\begin{lemma} \label{lem4.16}
There is some $\widehat{C} < \infty$ so that
for all sufficiently large $i$ and all
$u, u^\prime \in B(0, \delta r_\delta)$, we have
\begin{equation} \label{4.17}
\widehat{C}^{-1} \vol(\tau_i^{-1}(u)) \le
\vol(\tau_i^{-1}(u^\prime)) \le
\widehat{C} \vol(\tau_i^{-1}(u)).
\end{equation}
\end{lemma}
\begin{proof}
The proof uses gradient flow. We sketch the argument, which is
similar to 
\cite[Pf. of Lemma 6.15]{Kapovitch (2007)}.
Write
$\tau_i = ( \xi_{i,1}, \ldots, \xi_{i, n_\infty})$
and $u = (a_1, \ldots, a_{n_\infty})$.
From the construction of $\tau_i$ using distance functions from
strainer points, the functions $\xi_{i, \cdot}$ are
quantitatively semiconcave, independent of $i$.
Put
$H_-(u) = \{\widehat{u} = (b_1, \ldots, b_{n_\infty}) \in \R^{n_\infty} :
b_l \le a_l
\mbox{ for }
1 \le l \le n_\infty \}$.
Given $m_i \in \tau_i^{-1}(u^\prime)$, we can
perform a gradient flow starting from $m_i$
with respect to the gradient of the
distance function from an appropriate point in $M_i$, for a controlled
amount of time, to ensure that the result lies in $\tau_i^{-1}(H_-(u))$.
Then we perform a gradient flow with respect to the gradient of
$F = \min(0, \xi_{i,1} - a_1, \ldots, \xi_{i, n_\infty} - a_{n_\infty})$.
After a controlled amount of time, the result of the flow lies in
$\tau_i^{-1}(u)$.
This gives a map
$L_{u^\prime, u} : \tau_i^{-1}(u^\prime) \rightarrow \tau_i^{-1}(u)$.
Using the control on the semiconcavity of the
distance functions and of $F$, along with the ensuing distortion 
bounds for gradient flow, we obtain a bound on the
Lipschitz constant of $L_{u^\prime, u}$ that is independent of
$u$, $u^\prime$ and $i$.   Replacing
$u^\prime$ by a point $u^{\prime \prime}$ moving along a line from
$u^\prime$ to $u$,
and performing the same construction, shows that if the
fibers are orientable then
$L_{u^\prime, u}$ has degree one; if the fibers are not orientable
then we pass to orientable double covers
of the fibers and apply the same arguments.
In all, from the Lipschitz bound on
$L_{u^\prime, u}$, we obtain a bound 
$\frac{\vol(\tau_i^{-1}(u))}{\vol(\tau_i^{-1}(u^\prime))} \le \widehat{C}$
with $\widehat{C}$ independent of $u$, $u^\prime$ and $i$, thereby giving the
first inequality in (\ref{4.17}). Reversing
the roles of $u$ and $u^\prime$ gives the second inequality in (\ref{4.17}).
\end{proof}

We continue with the proof of Proposition \ref{prop4.13}.
Let $\mu_\delta : U_\delta \rightarrow (0,1)^{n_\infty}$
be linear coordinates for a neighborhood $U_\delta$
of $0$ in $B(0, \delta r_\delta)$. We
redefine $U_i$ to be
$\tau_i^{-1}(U_\delta)$ and put $h_i = \mu_\delta \circ \tau_i :
U_i \rightarrow (0,1)^{n_\infty}$.

\begin{lemma} \label{chi}
  There exist $\Delta < \infty$ and, for sufficiently large $i$,
  a closed element $\chi_i \in
  \Omega^{n - n_\infty}_{\Lip}(U_i; {\mathcal O}_{U_i})$ so that
for all
$u \in (0,1)^{n_\infty}$, we have $\int_{h_i^{-1}(u)} \chi_i = 1$, and
  $\int_{h_i^{-1}(u)} |\chi|^2 \dvol_{h_i^{-1}(u)} \le \Delta \left( \inf_{u \in (0,1)^{n_\infty}} \vol(h_i^{-1}(u))
  \right)^{-1}$. 
  \end{lemma}
\begin{proof}
We can assume that
$h_i$ is similarly defined on a slightly larger open set containing
$U_i$, so that the fiber $h_i^{-1}(1, \ldots, 1)$ is well defined.
  Write $h_i = (h_{i,1}. \ldots, h_{i,n_\infty})$. Similarly to the proof of
  Lemma \ref{lem4.16}, we
  perform a gradient flow on $\overline{U_i}$ with respect to the
  gradient of $F = \min(0, h_{i,1} - 1, \ldots, h_{i,n_\infty}-1)$.
  After a controlled amount of time, the result of the flow lies in
  $h_i^{-1}(1, \ldots, 1)$.  This gives a deformation retraction
  $L : \overline{U_i} \rightarrow h_i^{-1}(1, \ldots, 1)$. Using
  the control on the semiconcavity of $F$, along with the ensuing
  distortion bounds for gradient flow, we obtain a bound on the
  Lipschitz constant of $F$. Let $\chi_i$ be the pullback under $L$ of the
  normalized volume density
\begin{equation}
\dvol_{h_i^{-1}(1, \ldots, 1)}/
  \vol(h_i^{-1}(1, \ldots, 1)) \in
  \Omega^{n-n_\infty}_{\Lip}(h_i^{-1}(1, \ldots, 1);
        {\mathcal O}_{h_i^{-1}(1, \ldots, 1)}).
        \end{equation}
  Then $\chi_i \in \Omega^{n-n_\infty}_{\Lip}(U_i; {\mathcal O}_{U_i})$.
  The bound on the Lipschitz constant
  of $F$, which is independent of $i$, gives a pointwise bound on $\chi_i$
  of the form $|\chi_i| \le \const
  \left( \vol(h_i^{-1}(1, \ldots, 1)) \right)^{-1}$.
  As
  \begin{align}
    \int_{h_i^{-1}(u)} |\chi_i|^2 \dvol_{h_i^{-1}(u)}
        \le & 
    \const \left( \vol(h_i^{-1}(1, \ldots, 1)) \right)^{-2}
    \sup_{u \in (0,1)^{n_\infty}} \vol(h_i^{-1}(u)) \notag \\
    \le & \const \widehat{C} \left( \inf_{u \in (0,1)^{n_\infty}}
    \vol(h_i^{-1}(u))
    \right)^{-1}, \notag
  \end{align}
  the lemma follows.
  \end{proof}

Define $\psi, E_\psi, \Ker(d_{(0,1)^{n_\infty}})$ and
$N_\psi$ as in the proof of
Proposition \ref{prop4.9}, with $n$ replaced by $n_\infty$.

Let $R_i : (0, 2^{- \frac{n!}{n_\infty} c_i})^{n_\infty} \rightarrow
(0,1)^{n_\infty}$ be
multiplication by
$2^{\frac{n!}{n_\infty} c_i}$. Under rescaling,
\begin{equation}
\int_{(0, 2^{-\frac{n!}{n_\infty} c_i})^{n_\infty}}
|dR_i^* \psi|^2 \: dx^{n_\infty} =
2^{\frac{n!}{n_\infty} c_i(2p+2-n_\infty)} E_\psi
\end{equation}
and the square norm of $R_i^* \psi$ in
$\Omega^p_{L^2}((0, 2^{-\frac{n!}{n_\infty}c_i})^{n_\infty})/
\Ker(d)$ is
$2^{\frac{n!}{n_\infty} c_i(2p-n_\infty)} N_\psi$.

There are $\overline{k}_i = 2^{n!c_i}$ disjoint boxes
$\{B_j\}_{j=1}^{\overline{k}_i}$ in $(0,1)^{n_\infty}$, each congruent to
$(0, 2^{-\frac{n!}{n_\infty}c_i})^{n_\infty}$.
Let $\psi_j$ be the translate of
$R_i^* \psi$ to $B_j$.
Let $V^\infty_i$ be the span of
$\{\psi_j\}_{j=1}^{\overline{k}_i}$ in $\Omega^p_{L^2} \left((0,1)^{n_\infty} \right)$.
Let ${h_i^* V^\infty_i}$ denote the extension by zero 
from $\Omega^p_{L^2}(U_i)$ to 
$\Omega^p_{L^2}(M_i)$ of the pullback.

\begin{lemma}
  For large $i$, $h_i^* V^\infty_i$ is a subspace of
$\Omega^p_{L^2,d}(M_i)$ that
  projects to a $\overline{k}_i$-dimensional 
  subspace $\widetilde{h_i^* V^\infty_i}$
  of $\Omega^p_{L^2,d}(M_i)/\Ker(d_{M_i})$.
\end{lemma}
\begin{proof}
  We claim that $h_i^* \psi_j$ is a well-defined
  element of $\Omega^p_{L^2,d}(M_i)$,
  with $d h_i^* \psi_j = h_i^* d\psi_j$.
    Let $\Omega^*_{\Lip,c} \left( (0,1)^{n_\infty} \right)$ denote the
  differential graded algebra constructed from compactly supported Lipschitz
  functions on $(0,1)^{n_\infty}$.
  There is a pullback $h_i^* : \Omega^*_{\Lip,c} \left( (0,1)^{n_\infty} \right)
  \rightarrow \Omega^*_{\Lip}(M_i)$ of differential graded algebras.
  Since $\psi_j$ is a smooth compactly supported $p$-form on $(0,1)^{n_\infty}$,
  it follows as in the proof of Lemma \ref{unique} that there is a
  $\psi_j^\prime \in \Omega^*_{\Lip,c} \left( (0,1)^{n_\infty} \right)$ so that
  $\rho(\psi_j^\prime) = \psi_j$. Then $h_i^* \psi_j = \rho(h_i^* \psi_j^\prime)$
  in $\Omega^p_{L^2}(M_i)$. Using Lemma \ref{newlemma}, one shows that
  $h_i^* \psi_j$ lies in $\Omega^p_{L^2,d}(M_i)$, with differential
  given by $d h_i^* \psi_j = \rho(d h_i^* \psi_j^\prime) =
  \rho(h_i^* d \psi_j^\prime) = h_i^* d\psi_j$.

  The lemma now
  follows as in the proof of Proposition \ref{prop4.9}.
  \end{proof}

For sufficiently small $\delta$ and all large $i$, 
the ratio
$\frac{(h_i)_* \dvol_{M_i}}{\dvol_{(0,1)^{n_\infty}}}$ is bounded above
by twice the function on
$(0,1)^{n_\infty}$ which, to a point $u \in (0,1)^{n_\infty}$,
assigns the volume of
the fiber $h_i^{-1}(u)$.
If $\eta_i = \sum_{j=1}^{\overline{k}_i} \beta_j h_i^* \psi_j$ is a
nonzero element of $h_i^* V^\infty_i$ then
using (\ref{4.15}), there is some $\Lambda < \infty$ such that
for large $i$, we have
\begin{align} \label{upper}
  \int_{M_i} \langle d\eta_i, d\eta_i \rangle_{g_i} \dvol_{M_i} \le 
  & \Lambda^{p+1}  \sum_{j=1}^{\overline{k}_i} |\beta_j|^2
  \int_{(0,1)^n} \langle d\psi_j, d\psi_j \rangle_{(0,1)^n}
\frac{(h_i)_* \dvol_{M_i}}{\dvol_{(0,1)^{n_\infty}}}
\dvol_{(0,1)^n} \\
\le & 2 \Lambda^{p+1}
2^{\frac{n!}{n_\infty} c_i(2p+2-n_\infty)} E_\psi
\cdot \sup_{u \in (0,1)^{n_\infty}} \vol(h_i^{-1}(u))
\cdot \sum_{j=1}^{\overline{k}_i} |\beta_j|^2. \notag
\end{align}

Let $\widetilde{\eta}_i$ be the class of $\eta_i$ in
$\widetilde{h_i^* V^\infty_i}$. 
We claim that
\begin{equation} \label{lower}
\| \widetilde{\eta}_i \|^2 \ge
\frac14 2^{\frac{n!}{n_\infty} c_i(2p-n_\infty)} \Delta^{-1} N_\psi
\left( \inf_{u \in (0,1)^{n_\infty}} \vol(h_i^{-1}(u)) \right)
\sum_{j=1}^{\overline{k}_i} |\beta_j|^2.
\end{equation}
To see this, suppose that
$\| \widetilde{\eta}_i \|^2 <
\frac14 2^{\frac{n!}{n_\infty} c_i(2p-n_\infty)} \Delta^{-1} N_\psi
\left( \inf_{u \in (0,1)^{n_\infty}} \vol(h_i^{-1}(u)) \right)
\sum_{j=1}^{\overline{k}_i} |\beta_j|^2$. Then there is some
$\sigma_i \in \Ker(d_{M_i})$ so that
\begin{equation} \label{needed}
  \int_{M_i} \langle \eta_i +  \sigma_i, \eta_i +  \sigma_i \rangle_{g_i}
  \: \dvol_{M_i} < \frac12
2^{\frac{n!}{n_\infty} c_i(2p-n_\infty)} \Delta^{-1} N_\psi
\left( \inf_{u \in (0,1)^{n_\infty}} \vol(h_i^{-1}(u)) \right)
\sum_{j=1}^{\overline{k}_i} |\beta_j|^2.
\end{equation}
Let
$\int_{Z_i} : \Omega^*_{L^2} \left(U_i; {\mathcal O}_i \right)
\rightarrow \Omega^{*-(n-n_\infty)}_{L^2} \left( (0,1)^{n_\infty} \right)$
denote fiberwise integration.

\begin{lemma}
  The form $\int_{Z_i} \chi_i \wedge \sigma_i \in
  \Omega^{p}_{L^2} \left( (0,1)^{n_\infty} \right)$
  lies in $\Ker(d_{(0,1)^{n_\infty}})$.
\end{lemma}
\begin{proof}
Given a compactly supported
$\omega^\prime \in \Omega^{n_\infty-p-1}_{\Lip} \left( (0,1)^{n_\infty} \right)$, 
we have
\begin{equation} \label{newnewint}
   \int_{(0,1)^{n_\infty}} d\omega^\prime \wedge \int_{Z_i} \chi_i \wedge
  \sigma_i
=
\int_{U_i} d h_i^* \omega^\prime \wedge \chi_i \wedge \sigma_i.
\end{equation}
Smoothing $\sigma_i$ by applying the heat operator on $M_i$, and using
Lemma \ref{newlemma}, gives
\begin{equation}
\int_{U_i} d h_i^* \omega^\prime \wedge \chi_i \wedge \sigma_i =
\int_{U_i} d \left( h_i^*\omega^\prime \wedge \chi_i \wedge \sigma_i
\right) = 0.
\end{equation}
The lemma follows.
\end{proof}

Put $\nu_i = \int_{Z_i} \chi_i \wedge \sigma_i$.
As
\begin{equation}
  \int_{Z_i} \chi_i \wedge \eta_i =
  \int_{Z_i} \chi_i \wedge h_i^* \sum_{j=1}^{\overline{k}_i} \beta_j \psi_j =
  \sum_{j=1}^{\overline{k}_i} \beta_j \psi_j,
\end{equation}
it follows that
\begin{equation}
  \sum_{j=1}^{\overline{k}_i} \beta_j \psi_j + \nu_i = 
  \int_{Z_i} \chi_i \wedge (\eta_i + \sigma_i). 
\end{equation}

Using Lemma \ref{chi} and (\ref{needed}),
\begin{align}
&  \int_{(0,1)^n} \left\langle \sum_{j=1}^{\overline{k}_i}
  \beta_j \psi_j + \nu_i,
  \sum_{j=1}^{\overline{k}_i}
\beta_j \psi_j + \nu_i \right\rangle 
\: \dvol_{(0,1)^n} = \\
&  \int_{(0,1)^n} \left\langle 
\int_{Z_i} \chi_i \wedge (\eta_i + \sigma_i),
\int_{Z_i} \chi_i \wedge (\eta_i + \sigma_i) \right\rangle 
\: \dvol_{(0,1)^n} \le \notag \\
& \Delta
\left( \inf_{u \in (0,1)^{n_\infty}} \vol(h_i^{-1}(u))
  \right)^{-1}
\int_{(0,1)^n}
\int_{h_i^{-1}(u)} \langle \eta_i + \sigma_i,
\eta_i + \sigma_i \rangle \dvol_{h_i^{-1}(u)} \: \dvol_{(0,1)^{n_\infty}}(u) < \notag \\
& \frac12 2^{\frac{n!}{n_\infty}c_i(2p-n_\infty)} N_\psi
  \sum_{j=1}^{\overline{k}_i} |\beta_j|^2. \notag 
\end{align}
However,
\begin{align}
&  \int_{(0,1)^n} \left\langle \sum_{j=1}^{\overline{k}_i}
  \beta_j \psi_j + \nu_i,
  \sum_{j=1}^{\overline{k}_i}
\beta_j \psi_j + \nu_i \right\rangle 
\: \dvol_{(0,1)^n} = \\
&  \sum_{j=1}^{\overline{k}_i}
  \int_{B_j} \left\langle
  \beta_j \psi_j + \nu_i,
  \beta_j \psi_j + \nu_i \right\rangle 
  \: \dvol_{B_j} \ge 2^{\frac{n!}{n_\infty}c_i(2p-n_\infty)} N_\psi
  \sum_{j=1}^{\overline{k}_i} |\beta_j|^2 . \notag
\end{align}
This is a contradiction.

To finish the proposition, combining Lemma \ref{lem4.16}, (\ref{upper}) and (\ref{lower}) gives
\begin{equation}
  \lambda_{\overline{k}_i,p} \le 8 \cdot 2^{\frac{2(n!)}{n_\infty} c_i} \Lambda^{p+1} \Delta \widehat{C} E_\psi
  N_{\psi}^{-1} = 
8 \Lambda^{p+1} \Delta \widehat{C} E_\psi
N_{\psi}^{-1}  \overline{k}_i^{\frac{2}{n_\infty}} \le
8 \Lambda^{p+1} \Delta \widehat{C} E_\psi
N_{\psi}^{-1}  \overline{k}_i^{\frac{2}{s}}
\end{equation} 
For large $i$, this contradicts the fact that
$\lambda_{\overline{k}_i,p}(M_i)
\ge \frac{i}{4^{\frac{n!}{s}}}  \overline{k}_i^{\frac{2}{s}}$.
\end{proof}
  
Theorem \ref{thm1.3} follows from Proposition \ref{prop4.13} and the
Hodge decomposition.

\begin{remark} \label{1.12}
  In addition to upper bounds on the eigenvalues, one could ask about
  lower bounds.
  Under the assumption of a lower bound on the curvature operator, there
  are lower bounds on $\lambda_{k,p}$ in terms of the diameter,
  coming from heat trace estimates, with the
  bound proportionate to $k^{\frac{2}{n}}$ as $k \rightarrow \infty$
  \cite{Berard (1988)}. (For some finite number of $k$'s, the
  lower bound may be zero.) 
  If we only assume a lower bound on the sectional curvature then it is
  not so clear if there are eigenvalue bounds from below, in terms of the
  diameter.  As a consistency
  check, we note that eigenvalue bounds from below, going to infinity
  as $k \rightarrow \infty$, imply upper bounds on
  Betti numbers.  Such upper bounds on Betti numbers exist if
  we assume a lower bound on the
  curvature operator or, more generally a lower bound on sectional curvatures.
  However, the proofs are
  very different in the two cases.
  With a lower bound on the curvature operator, a Betti number bound
  (in terms of the diameter) follows easily from heat trace estimates.
  In comparison, if we assume a lower bound on sectional curvatures
  then there is again
  a Betti number bound, but the proof uses completely different methods
  \cite{Gromov (1981)}.

  One can also ask about spectral convergence.
  That is, suppose that $\{(M_i, g_i)\}_{i=1}^\infty$ is a sequence
  of Riemannian manifolds satisfying the assumptions of Theorem
  \ref{thm1.7}, with a Gromov-Hausdorff limit $X$. The question is
  whether $\lim_{i \rightarrow \infty} \lambda_{k,p}(M_i) =
  \lambda_{k,p}(X)$. This is known for functions
  when the lower sectional curvature bound is
  replaced by a lower Ricci curvature
  bound \cite{Cheeger-Colding (2000)}, and for $1$-forms when the
  lower sectional curvature bound is replaced by a double sided Ricci bound
  \cite{Honda (2015)}. It may be necessary to assume that the Riemannian
  manifolds $\{(M_i, g_i)\}_{i=1}^\infty$ have a uniform lower bound
  on the curvature operator.
\end{remark}

\section{Hodge theorem and compact resolvent} \label{sec5}

In this section we introduce the class ${\mathcal C}_*$ of
Lipschitz multiconical spaces. 
If $X \in {\mathcal C}_*$
then in Subsection \ref{subsec5.1} we prove a Hodge theorem,
in the sense that we identify $\Ker(\triangle_*)$ with a certain
intersection homology group. In Subsection \ref{subsec5.2} we show that
$(I + \triangle_*)^{-1}$ is compact.  

To begin,
if $Y$ is a metric space of diameter at most $\pi$, and $\epsilon > 0$, then
the truncated open metric cone $CY(\epsilon)$ over $Y$ is
homeomorphic to the topological space $([0, \epsilon) \times Y)/\sim$,
where $(0, y_1) \sim (0,y_2)$ for all $y_1, y_2 \in Y$. The vertex of the
cone, i.e. the equivalence class $\{(0,y)\}_{y \in Y}$, is denoted by
$\star$.  The metric on $CY$ comes from
\begin{equation} \label{5.1}
d_{CY} ((t_1,y_1),(t_2,y_2)) = t_1^2 + t_2^2 - 2 t_1 t_2
\cos(d_Y(y_1,y_2)).
\end{equation}

We define a class ${\mathcal C}_*$ of compact metric spaces inductively.
An element of ${\mathcal C}_0$ is a finite metric space.
A compact
metric space $X$ lies in ${\mathcal C}_n$ if every point $x \in X$
has a neighborhood $U$ so that there is a pointed biLipschitz
homeomorphism $h : (U,x) \rightarrow (CY(\epsilon), \star)$ for
some $\epsilon > 0$ and 
some $Y \in {\mathcal C}_{n-1}$ with $\diam(Y) \le \pi$.

Note that for any $\epsilon > 0$, the cone
$CY(\epsilon)$ is biLipschitz homeomorphic to 
$CY(1)$, so the parameter $\epsilon$ is not really needed in the
definition.

\begin{example}
  If $X$ is an $n$-dimensional closed Riemannian Lipschitz
  manifold then $X \in {\mathcal C}_n$.
  \end{example}

By induction, if $X \in {\mathcal C}_n$ then there is an open
dense subset $X^*$ of full Hausdorff $n$-measure
that has the structure of a  Riemannian Lipschitz
$n$-manifold.  Namely if $n = 0$ then $X^* = X$. If $n > 0$,
cover $X$ by a finite number of neighborhoods $\{U_i\}_{i=1}^N$
of points $\{x_i\}_{i=1}^N$ so that there are pointed
biLipschitz homeomorphisms $\phi_i : (U_i, x_i)
\rightarrow (CY_i(\epsilon_i), \star_i)$, with $Y_i \in
{\mathcal C}_{n-1}$. Then we can take
$X^* = \bigcup_{i=1}^N \phi_i^{-1} (CY_i^*(\epsilon_i) - \star_i)$.

We can apply the setup of Section \ref{sec2}.

\begin{example} \label{ex5.2}
An element $X$ of ${\mathcal C}_1$ is a finite metric graph $G$.
The subset $X^*$ is the union of the open edges.
For convenience, suppose that $X^*$ is oriented.
Given $f \in \Omega^0_{L^2}(X)$ and an edge $e$ of length $L_e$, write
$f \Big|_e = f_e$, where $f_e \in L^2(0, L_e)$. Then $f \in \Dom(d)$ if
\begin{enumerate}
\item Each $f_e \in H^1(0, L_e)$, and
\item
For each vertex $v$ of $G$, the 
sum of the limiting values of $f_e$ along edges $e$ incoming to $v$
equals the sum of the limiting values of $f_e$ along edges outgoing from $v$.
\end{enumerate}

Note that $\Omega^0_{L^2}(X)/\overline{\Image(d)} =
\Omega^0_{L^2}(X)$.
A $1$-form $\omega \in \Omega^1_{L^2,d}(X) =
\Omega^1_{L^2}(X)$ consists of a union of
$L^2$-regular $1$-forms on the open edges.
The $L^2$-cohomology of $X$ is given in degree zero by
$\Ker(d : \Omega^0_{L^2,d}(X) \rightarrow \Omega^1_{L^2,d}(X))
\cong \R^{b_1(G)}$,
and in degree one by $\Omega^1_{L^2,d}(X)/{\Image(d)} \cong
\R^{b_0(G)}$.

Given $\omega \in \Omega^1_{L^2}(X)$ and an edge $e$ of length $L_e$,
write $\omega \Big|_e = {\omega}_e ds$, where
${\omega}_e \in L^2(0, L_e)$ and $s$ is the oriented
length parameter along $e$. Then $\omega \in \Dom(d^*)$ if
\begin{enumerate}
\item Each $\omega_e$ lies in $H^1(0, L_e)$, and
\item
For each vertex $v \in G$, there is a number $F_v$ so that for
each edge $e$ adjoining
$v$, the limiting value of $\omega_e$ on $e$, toward $v$, is
$\pm F_v$, depending on whether $e$ is incoming or outgoing.
\end{enumerate}
If $\omega \in \Dom(d^*)$ then the restriction of
$d^* \omega \in \Omega^0_{L^2}(X)$ to an edge $e$ is
$- \: \frac{d\omega_e}{ds}$.

Then $\Dom(\triangle_0) =
\{f \in \Omega^0_{L^2}(X)\: : \: df \in \Dom(d^*)\}$.
The restriction of $\triangle_0 f$ to an edge $e$ is
$- \: \frac{d^2 f_e}{ds^2}$. The operator $\triangle_1$ on
$\Omega^1_{L^2}(X)/\overline{\Image(d)}$ vanishes.

To see how the orientation of $G$ affects the calculations,
suppose that $G_e^\prime$ is the oriented graph obtained by
starting with $G$ and reversing
the orientation of a particular edge
$e$. Given $f$ in the domain of $d$ for $G$,
define
$f^\prime$ by
\begin{equation} \label{5.3}
  f^\prime \Big|_{e^\prime} =
\begin{cases}
-  f \Big|_{e^\prime}, & \mbox{if } e^\prime = e \\
f \Big|_{e^\prime}, & \mbox{if } e^\prime \neq e.
\end{cases}
\end{equation}
Then $f^\prime$ is in the domain of $d$ for $G^\prime$, and
has the same energy as $f$. Hence if $f$ is in the
domain of $\triangle_0$ for $G$, then 
$f^\prime$ is in the
domain of $\triangle_0$ for $G^\prime$. It follows that choosing
different orientations of the graph gives unitarily equivalent
representations of $\triangle_0$.
\end{example}

\subsection{Hodge theorem} \label{subsec5.1}

For background information on intersection homology, we refer to
\cite{Kirwan (2006)}.

\begin{proposition} \label{prop5.10}
  If $X \in {\mathcal C}_n$ then for all $p \in [0,n]$, the unreduced
  $L^2$-cohomology
\begin{equation}
  {\mathcal H}_{L^2}^p(X) = \Ker \left( d : \Omega^p_{L^2,d}(X) \rightarrow
  \Omega^{p+1}_{L^2,d}(X)
  \right)/
\Image \left( d : \Omega^{p-1}_{L^2,d}(X) \rightarrow \Omega^{p}_{L^2,d}(X)
\right)
\end{equation}
is isomorphic to
$\IH^{GM}_{n-p}(X; {\mathcal O})$. Here
$\IH^{GM}_*$ denotes the Goresky-MacPherson intersection homology
with real coefficients, computed with the perversity $\overline{p}$ given by
$\overline{p}(0) = 0$ and
$\overline{p}(j) = \left[ \frac{j-1}{2} \right]$ for $j \ge 1$,
and ${\mathcal O}$ is the orientation line bundle of the
codimension-zero stratum of $X$.
\end{proposition}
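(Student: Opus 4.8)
The plan is to prove Proposition~\ref{prop5.10} by induction on $n$, using a sheaf-theoretic characterization of intersection homology. The sheaf $\Omega^*_{L^2_{loc},d}$ from Section~\ref{sec3} is a complex of sheaves on $X$, and by Lemma~\ref{lem3.2} its sections over $X$ compute a cohomology whose relationship to the global $L^2$-complex $\Omega^*_{L^2,d}(X)$ must first be pinned down. The key point is that $\Omega^*_{L^2_{loc},d}(X) = \Omega^*_{L^2,d}(X)$ when $X$ is compact: local $L^2$ forms on a compact space are global $L^2$ forms (the Hausdorff measure is finite), and the boundary-condition subtlety noted after (\ref{3.1}) disappears because there is no boundary to approach — every compact $K\subset X$ exhausts $X$. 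So the unreduced $L^2$-cohomology ${\mathcal H}^*_{L^2}(X)$ is the hypercohomology ${\mathbb H}^*(X; \Omega^\bullet_{L^2_{loc},d})$, provided one checks that the sheaf complex has no higher cohomology sheaves — i.e. that $\Omega^\bullet_{L^2_{loc},d}$ is a complex of soft (or fine) sheaves, which follows from the existence of Lipschitz partitions of unity on $X$, exactly as exploited in the proof of Lemma~\ref{lem3.2}.

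The heart of the argument is then a \emph{Poincar\'e lemma on cones}: if $Y\in{\mathcal C}_{n-1}$ with $\diam(Y)\le\pi$ and $CY$ is the truncated open cone, then one must compute ${\mathcal H}^*_{L^2}$ of a neighborhood $CY(\epsilon)$ of the vertex $\star$ in terms of ${\mathcal H}^*_{L^2}(Y)$. The expected outcome, matching the Goresky--MacPherson formula with the stated perversity $\overline{p}$, is a truncation: the $L^2$-cohomology of the cone $CY(\epsilon)$ vanishes in degrees above a cutoff determined by $n$ and $\overline{p}(n)$, and agrees with ${\mathcal H}^*_{L^2}(Y)$ below it. Concretely, one writes a form on $(0,\epsilon)\times Y$ as $\alpha(t) + dt\wedge\beta(t)$, uses the cone metric $dt^2 + t^2 g_Y$ (up to biLipschitz equivalence, which by Lemma~\ref{lem2.7} and Remark~\ref{rem2.6} does not affect the $L^2$-complex), and computes the weighted $L^2$-norm $\int_0^\epsilon \big(t^{n-1-2p}|\alpha|^2 + t^{n-1-2(p-1)}|\beta|^2\big)\,dt$; the interplay between this weight and integrability of the cone-coordinate primitive $\int_0^t\beta\,ds$ near $t=0$ produces exactly the perversity cutoff. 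This is the standard Cheeger-type computation, but here it must be done in the Lipschitz category, using the $\Omega_{\Lip}$ test forms and the operator $d$ of (\ref{2.2}) rather than smooth forms; the main technical care is that the relative-boundary condition built into $\Omega^*_{L^2,d}$ is the local statement (\ref{3.1}) with no condition at $\star$, since $\star$ is an interior point of $X$.

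With the local computation in hand, the induction closes: assuming the proposition for elements of ${\mathcal C}_{n-1}$ (and, at the base, for finite metric graphs, which is Example~\ref{ex5.2} — note the degree shift $\IH^{GM}_{n-p}$ matches $b_1(G)$ in degree $0$ and $b_0(G)$ in degree $n=1$ there), the stalk of the derived pushforward ${\mathbf R}(j_*)\,\Omega^\bullet_{L^2}$ from $X^*$ at a point with cone neighborhood $CY$ is the truncated $L^2$-cohomology of $Y$, which by inductive hypothesis is $\IH^{GM}$ of $Y$, truncated. One then invokes the axiomatic / Deligne-sheaf characterization of Goresky--MacPherson intersection homology (its sheaf complex is determined up to quasi-isomorphism by exactly this stalk-truncation condition stratum by stratum), as in \cite{Kirwan (2006)}, to conclude that $\Omega^\bullet_{L^2_{loc},d}$ is quasi-isomorphic to the Deligne sheaf for perversity $\overline{p}$, twisted by ${\mathcal O}$, whence the hypercohomologies agree. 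The handling of codimension-one strata — where the classical pseudomanifold setup does not directly apply — requires using the original Goresky--MacPherson definition extended as in \cite{Friedman (2011)}; I expect this bookkeeping about codimension-one strata, together with verifying that the cone computation gives the \emph{right} truncation degree in the presence of such strata (so that $\overline{p}(1) = 0$ is forced, which is why the two-point cone $= $ interval imposes relative/Dirichlet behavior), to be the main obstacle, alongside the careful transfer of the Cheeger cone lemma into the $L^\infty_{loc}$-Riemannian Lipschitz setting where one only has $L^\infty$ control on $g$ and must work through the test-form formalism rather than integration by parts.
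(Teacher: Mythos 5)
Your plan is the same one the paper follows: compare the sheaf $\Omega^\bullet_{L^2_{loc},d}$ against an intersection-chain sheaf, compute the local (stalk) cohomology via a Poincar\'e lemma on cones, and invoke a uniqueness theorem for sheaf complexes satisfying intersection-homology axioms. You also correctly identify the two pressure points: the weighted-norm cone computation and the presence of codimension-one strata. But the proposal stops exactly where the real work begins.

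The decisive gap is the uniqueness step. The Deligne/Goresky--MacPherson characterization you cite (as in \cite{Kirwan (2006)}, \cite[\S 3.5]{Goresky-MacPherson (1983)}) applies to pseudomanifolds, i.e.\ assumes there are no codimension-one strata. Every $X\in{\mathcal C}_n$ in general has such strata (already for $n=1$: an interval has two vertex strata of codimension one). You flag this and point to \cite{Friedman (2011)}, but that reference is a survey of competing \emph{definitions} of intersection homology for general perversities, not a unique-extension theorem, so it does not close the argument. The tool the paper actually uses is the unique-extension result of Habegger--Saper for cs-spaces, \cite[Section 4]{Habegger-Saper (1991)} with $c_p = c = 2$: one must check that the restrictions of the two sheaves to the union of the codimension-zero and codimension-one strata are equivalent coefficient systems in the sense of \cite[Definition 5.1]{Habegger-Saper (1991)} (this is precisely where the relative/Dirichlet behavior of $\Omega^*_{L^2,d}$ near codimension-one strata, which you correctly connect to $\overline{p}(1)=0$, gets used), then verify the vanishing axioms $A_{\overline{p}}$ via the cone computations and a K\"unneth formula, and conclude by \cite[Proposition 4.5]{Habegger-Saper (1991)}. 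Without this (or an equivalent replacement) the argument does not go through. A secondary omission: the cone computation is only promised, not done. The paper records the explicit answers, e.g.\ for $k>1$, $\HH^i_{L^2,d}(CY)\cong\HH^i_{L^2,d}(Y)$ for $i<k/2$ and vanishes for $i\ge k/2$, with a complementary shifted range for the compactly supported version, and it is these specific cutoffs (compared against $\IH^{GM}$ of the cone via \cite[Proposition 2.18]{Friedman (2007)}) that must be shown to satisfy the $A_{\overline{p}}$ axioms in the presence of codimension-one strata; your weighted-norm heuristic is pointing the right way but does not by itself establish the numerical match with $\overline{p}$.
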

\begin{proof}
We first note that $\IH$ is a topological invariant of $X$
\cite[Theorem 9]{King (1985)} but can be computed using a
topological stratification.
In our case there is a natural stratification
$X = X_n \supset X_{n-1} \supset \ldots \supset X_0 \supset
X_{-1} = \emptyset$ given by saying
that $x \in X_j$ if and only if there is no neighborhood
of $x$ that is biLipschitz equivalent to $B^{j+1} \times CY^{n-j-2}$
for any $Y^{n-j-2} \in {\mathcal C}_{n-j-2}$ of diameter at most $\pi$.
Here $B^{j+1}$ is the open unit ball in $\R^{j+1}$.
The associated
codimension-$k$ stratum is $X_{n-k} - X_{n-k-1}$, a manifold
of dimension $n-k$. A point in the codimension-$k$
stratum has a neighborhood that splits off a $B^k$-factor, but there is
no neighborhood that splits off a $B^{k+1}$-factor.

If ${\mathcal O}$ is the orientation line bundle of
the codimension-zero stratum then since $\overline{p}(1) = 0$,
it follows as in \cite[Section 2.2]{Goresky-MacPherson (1983)}
that perversity-$\overline{p}$
intersection homology with values in ${\mathcal O}$
is well-defined. 

If $X$ is a pseudomanifold, i.e. if $X_{n-1} = X_{n-2}$,
then the result
of Proposition \ref{prop5.10} is well known and goes back to work of
Cheeger \cite{Cheeger (1983)} and
Cheeger-Goresky-MacPherson \cite{Cheeger-Goresky-MacPherson (1982)}.
The proofs that are relevant for us are sheaf-theoretic in nature, and
appear in \cite{Nagase (1986)} and \cite{Youssin (1994)}.

There are two relevant sheaves of differential graded
complexes on $X$. The first one is
the sheaf $\Omega^*_{L^2_{loc},d}$ defined in Section \ref{sec3}.
We will use the fact that it only depends on $X$ through the
biLipschitz homeomorphism class of $X$.
The second relevant sheaf, as pointed out to me by Greg Friedman, is
the sheaf $\IC_{n-\star}$ coming from the presheaf
whose sections, over an open set
$U \subset X$, are the singular intersection chains
$C_{n-\star}(X, X - \overline{U}; {\mathcal O})$ relative to
the perversity $\overline{p}$
\cite[Section 3]{Friedman (2007)}.
(If $X$ is a PL-stratified space then $IC_{n-\star}$ is derived
isomorphic to the sheaf $IC^\infty_{n-\star}$ whose sections,
over an open set $U$, are the locally finite
${\mathcal O}$-valued simplicial intersection chains
relative to $\overline{p}$.)
The hypercohomology groups
of the two sheaves
are isomorphic to ${\mathcal H}^*_{L^2}(X)$
and $\IH^{GM}_{n-*}(X; {\mathcal O})$,
respectively.  Hence it suffices to show that the two sheaves
are isomorphic in the derived category of differential graded
sheaves on $X$.

When $X$ is a pseudomanifold, the strategy of
\cite{Nagase (1986)} was to use the unique extension result
of \cite{Goresky-MacPherson (1983)}. On the codimension-zero
stratum, each sheaf was quasi-isomorphic to the constant $\R$-sheaf
in degree $0$.  As each sheaf satisfied the
axioms of \cite[Section 3.4]{Goresky-MacPherson (1983)}, the stratum-by-stratum
argument of 
\cite[Section 3.5]{Goresky-MacPherson (1983)} showed that the two extensions
from the codimension-zero stratum
to all of $X$ are
isomorphic in the derived category.

When the codimension-one stratum is nonempty, this strategy has
to be slightly modified. The relevant unique extension result for us is
in \cite[Section 4]{Habegger-Saper (1991)}, with
$c_p = c =2$. We have to know that the
restrictions of
the two sheaves, on the union of the
codimension-zero and codimension-one strata, are equivalent
coefficient systems in the sense of
\cite[Definition 5.1]{Habegger-Saper (1991)}. We also have
to know that the conditions of
\cite[Proposition 5.2(1)]{Habegger-Saper (1991)} are satisfied.
Then \cite[Proposition 4.5]{Habegger-Saper (1991)} implies that
the two sheaves are isomorphic in the derived category of
differential graded sheaves on $X$.

As the steps are similar to those in the pseudomanifold case,
we just give the main points. We let
$\HH^*_{L^2,d}(\cdot)$ denote hypercohomology of the sheaf
$\Omega^*_{L^2_{loc},d}$ and we let
$\HH^*_{L^2,d,c}(\cdot)$ denote compactly-supported
hypercohomology.
Similarly, we let $\IC^{GM}_{n-\star}(\cdot)$ denote
hypercohomology of the sheaf $\IC_{n-\star}$ and we let
$\IC^{GM}_{n-\star,c}(\cdot)$ denote compactly supported
hypercohomology, i.e the usual ${\mathcal O}$-twisted
Goresky-MacPherson intersection
homology in degree $n - \star$. 

First, we give the relevant cohomology of the truncated open metric cone
$CY = CY(1)$ over some $Y \in {\mathcal C}_{k-1}$.
If $k=1$ then
\begin{align} \label{5.11}
\HH^0_{L^2,d}(CY) = \IH^{GM}_1(CY) & = \widetilde{\HH}_0(Y), \\
\HH^1_{L^2,d}(CY) = \IH^{GM}_0(CY) & = 0, \notag
\end{align}
where $\widetilde{\HH}$ denotes reduced homology,
and
\begin{align} \label{5.12}
\HH^0_{L^2,d,c}(CY) = \IH^{GM}_{1,c}(CY) & =0, \\
\HH^1_{L^2,d,c}(CY) = \IH^{GM}_{0,c}(CY) & = \R. \notag
\end{align}
If $k > 1$ then
\begin{equation} \label{5.13}
  \HH^i_{L^2,d}(CY) \cong
  \begin{cases}
    \HH^i_{L^2,d}(Y) & \mbox{ if } i < \frac{k}{2}, \\
    0 & \mbox{ if } i \ge \frac{k}{2},
  \end{cases}
  \end{equation}

\begin{equation} \label{5.14}
  \HH^i_{L^2,d,c}(CY) \cong
  \begin{cases}
    \HH^{i-1}_{L^2,d}(Y) & \mbox{ if } i \ge \frac{k}{2} + 1, \\
    0 & \mbox{ if } i < \frac{k}{2} + 1,
  \end{cases}
  \end{equation}

\begin{equation} \label{5.15}
  \IH^{GM}_{k-i}(CY) \cong
  \begin{cases}
    \IH^{GM}_{k-1-i}(Y) & \mbox{ if } i < \frac{k}{2}, \\
    0 & \mbox{ if } i \ge \frac{k}{2}
  \end{cases}
  \end{equation}
  and
\begin{equation} \label{5.16}
  \IH^{GM}_{k-i,c}(CY) \cong
  \begin{cases}
    \IH^{GM}_{k-i,c}(Y) & \mbox{ if } i \ge \frac{k}{2} + 1, \\
    0 & \mbox{ if } i < \frac{k}{2} + 1.
  \end{cases}
\end{equation}
Equations (\ref{5.13}) and
(\ref{5.14}) can be proved using separation of variables as in
\cite{Youssin (1994)}.
Equation (\ref{5.13}) can be understood as saying that
the $L^2$-cohomology of $CY$ comes from pulling back
harmonic forms from $Y$ with respect to the projection map
$(0,1) \times Y \rightarrow Y$, provided that the pullback
is square integrable.
Equation (\ref{5.14}) can be understood as saying that
the compactly supported $L^2$-cohomology of $CY$ is generated by
forms of the type $\phi^\prime dr \wedge \omega$, where
$\phi \in C^\infty(0,1)$ is a nonincreasing function that is identically
one on $(0, 1/3)$ and identically zero on $(2/3, 1)$, and
$\omega$ is a harmonic $(i-1)$-form on $Y$.
Then when $i \ge \frac{k}{2} + 1$, the putative primitive
$\phi \omega$ fails to be square integrable on $CY$.
Equations (\ref{5.15}) and (\ref{5.16}) follow from
\cite[Proposition 2.18]{Friedman (2007)}.

Next, for both sheaf cohomology theories there are K\"unneth
formulas  :
\begin{align} \label{5.17}
  \HH^i_{L^2,d}(B^{n-k} \times CY) & \cong
    \HH^i_{L^2,d}(CY), \\
  \HH^i_{L^2,d,c}(B^{n-k} \times CY) & \cong
    \HH^{i-n+k}_{L^2,d,c}(CY), \notag \\
  \IH^{GM}_{n-i}(B^{n-k} \times CY) & \cong
    \IH^{GM}_{k-i}(CY), \notag \\
  \IH^{GM}_{n-i,c}(B^{n-k} \times CY) & \cong
    \IH^{GM}_{n-i,c}(CY). \notag
\end{align}
The third isomorphism comes from
\cite[Proposition 2.20]{Friedman (2007)}.

To check the conditions of \cite{Habegger-Saper (1991)},
let $A$ denote one of the two above differential graded sheaves.
They are both cohomologically constructible with respect to
the stratification. As in
\cite[Section 1.12]{Goresky-MacPherson (1983)}, 
if $v$ is the vertex of a cone $CY$, and $f : (0,v)
\rightarrow B^{n-k} \times CY$ is inclusion, then
$\HH^i(f^* A) \cong \HH^i(B^{n-k} \times CY; A)$ and
$\HH^i(f^! A) \cong \HH_c^i(B^{n-k} \times CY; A)$.
We will use the fact that the cohomology of
$\Omega^*_{L^2_{loc}, d}$ is biLipschitz invariant, in order
to compute the $L^2$-cohomology of a neighborhood
of a point $x \in X$ that is biLipschitz to
$B^{n-k} \times CY$, using the conical metric on $CY$ and the product metric
on $B^{n-k} \times CY$.

The restrictions of $\Omega^*_{L^2_{loc},d}$ and $\IC_{n-\star}$ to
$X_n - X_{n-2}$ are quasi-isomorphic; c.f. (\ref{5.11}) and (\ref{5.12}).
Let ${\mathcal E}$ denote their common class in the derived
category of differential graded sheaves on $X_n - X_{n-2}$.
From (\ref{5.11}), (\ref{5.12}) and (\ref{5.17}), if $x \in X_n - X_{n-2}$ then
$\HH^i({\mathcal E}_x) = 0$ for $i > 0$ and
$\HH^i(f_x^!{\mathcal E}) = 0$ for $i < n$. 
It follows from \cite[Proposition 5.2]{Habegger-Saper (1991)} that
the coefficient system satisfies
\cite[Definition 5.1]{Habegger-Saper (1991)}.

Finally, from (\ref{5.13}), (\ref{5.15}) and (\ref{5.17}), we have
\begin{equation} \label{5.18}
\HH^i_{L^2,d}(B^{n-k} \times CY) =
\HH^{GM}_{n-i}(B^{n-k} \times CY) = 0
\end{equation}
if $Y \in {\mathcal C}_{k-1}$ and 
$i > \overline{p}(k)$. 
From (\ref{5.14}), (\ref{5.16}) and (\ref{5.17}), we have
\begin{equation} \label{5.19}
\HH^i_{L^2,d,c}(B^{n-k} \times CY) =
\HH^{GM}_{n-i,c}(B^{n-k} \times CY) = 0
\end{equation}
if  $Y \in {\mathcal C}_{k-1}$ and
$i <  n - \max(n-2-\overline{p}(k), 0)$.
Using \cite[Lemma 4.6]{Habegger-Saper (1991)}, the sheaves
$\Omega^*_{L^2_{loc},d}$ and $\IC_{n-\star}$ satisfy the axioms
$A_{\overline{p}}$ in the sense of
\cite[Definition 4.3]{Habegger-Saper (1991)}.
From \cite[Remark 4.2]{Habegger-Saper (1991)} and
\cite[Proposition 4.5]{Habegger-Saper (1991)}, it follows that the two sheaves
are derived isomorphic on $X$.  This proves the proposition.
\end{proof}

\begin{corollary} \label{cor5.8}
If $X \in {\mathcal C}_n$ then for all $p \in [0,n]$,
we have
$\dim(\Ker(\triangle_p)) < \infty$.
\end{corollary}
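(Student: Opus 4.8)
The plan is to read off finiteness of $\Ker(\triangle_p)$ from Proposition \ref{prop5.10}. Recall from \eqref{2.14}, \eqref{2.15} and \eqref{2.16} that $\Ker(\triangle_p)$, as a subspace of $\Omega^p_{L^2}(X)/\overline{\Image(d)}$, is isomorphic to the reduced $L^2$-cohomology
\[
\Ker\!\left( d : \Omega^p_{L^2,d}(X) \to \Omega^{p+1}_{L^2,d}(X) \right) \Big/ \overline{\Image\!\left( d : \Omega^{p-1}_{L^2,d}(X) \to \Omega^{p}_{L^2,d}(X) \right)}.
\]
Since $\Image(d) \subseteq \overline{\Image(d)} \subseteq \Ker(d)$, the last inclusion being Lemma \ref{lem2.9}, this reduced quotient is a quotient of the unreduced $L^2$-cohomology ${\mathcal H}^p_{L^2}(X)$. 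Hence it suffices to show that ${\mathcal H}^p_{L^2}(X)$ is finite-dimensional.

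By Proposition \ref{prop5.10}, ${\mathcal H}^p_{L^2}(X) \cong \IH^{GM}_{n-p}(X; {\mathcal O})$, so it remains to observe that the Goresky-MacPherson intersection homology of $X$ is finitely generated. This is standard: $X$ is compact and, by definition of ${\mathcal C}_n$, admits a finite open cover by distinguished neighborhoods, each biLipschitz (hence homeomorphic) to a cone $CY$ over some $Y \in {\mathcal C}_{n-1}$, and finite intersections of such neighborhoods can again be covered by distinguished neighborhoods of the same type. Arguing by induction on $n$, with the finite metric spaces of ${\mathcal C}_0$ as the base case, the cone formulas \eqref{5.15} and \eqref{5.16} (together with the K\"unneth isomorphisms recorded in the proof of Proposition \ref{prop5.10}) show that each distinguished neighborhood has finitely generated $\IH^{GM}$, and a Mayer-Vietoris argument over the finite cover then shows that $\IH^{GM}_\ast(X; {\mathcal O})$ is finitely generated. (Alternatively, one may invoke the fact noted in the proof of Proposition \ref{prop5.10} that the intersection chain sheaf is cohomologically constructible with respect to a finite stratification, so that its hypercohomology on the compact space $X$ is finite-dimensional.) Therefore ${\mathcal H}^p_{L^2}(X)$, and a fortiori $\Ker(\triangle_p)$, is finite-dimensional.

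There is no real obstacle beyond assembling these ingredients, since the substantive work is contained in Proposition \ref{prop5.10}. One refinement worth recording: once ${\mathcal H}^p_{L^2}(X)$ is known to be finite-dimensional, $\Image(d)$ has finite codimension in the closed subspace $\Ker(d)$; viewing $d$ as a bounded operator out of $\Omega^{p-1}_{L^2,d}(X)$ with its graph norm and applying the open mapping theorem to the surjection onto $\Ker(d)$ obtained by adjoining a finite-dimensional complement, one finds that $\Image(d)$ is in fact closed. Thus the reduced and unreduced $L^2$-cohomologies of $X$ coincide, and the isomorphism of Proposition \ref{prop5.10} computes $\Ker(\triangle_p)$ on the nose.
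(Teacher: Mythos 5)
Your proposal is correct and takes essentially the same approach as the paper: $\Ker(\triangle_p)$ is identified via \eqref{2.16} with the reduced $L^2$-cohomology, which is a quotient of the unreduced ${\mathcal H}^p_{L^2}(X)$, and Proposition \ref{prop5.10} identifies the latter with the finite-dimensional group $\IH^{GM}_{n-p}(X;{\mathcal O})$. Your closing "refinement" — using finiteness together with the open mapping theorem to deduce that $\Image(d)$ is closed, so that reduced and unreduced $L^2$-cohomology coincide — is exactly Corollary \ref{cor5.9} in the paper.
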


\begin{corollary} \label{cor5.9}
If $X \in {\mathcal C}_n$ then for all $p \in [0,n]$,
we have
$\overline{\Image(d)} = \Image(d) \subset \Omega^p_{L^2}(X)$.
\end{corollary}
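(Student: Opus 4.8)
The plan is to deduce closedness of the range from Proposition \ref{prop5.10} together with a standard fact about closed operators, namely that a closed operator whose range has finite codimension in its closure automatically has closed range. Fix $p$ and write $d$ for the operator $d : \Omega^{p-1}_{L^2,d}(X) \rightarrow \Omega^{p}_{L^2}(X)$, which is closed by Lemma \ref{lem2.8} and whose range lies in $\Ker(d)$ by Lemma \ref{lem2.9}. Thus $\Image(d) \subseteq \overline{\Image(d)} \subseteq \Ker(d)$ inside $\Omega^p_{L^2}(X)$, so $\overline{\Image(d)}/\Image(d)$ is a linear subspace of the unreduced $L^2$-cohomology $\mathcal{H}^p_{L^2}(X) = \Ker(d)/\Image(d)$. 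By Proposition \ref{prop5.10}, $\mathcal{H}^p_{L^2}(X) \cong \IH^{GM}_{n-p}(X;\mathcal{O})$, which is finite-dimensional since $X$ is compact and carries a finite stratification by manifolds (the one exhibited in the proof of Proposition \ref{prop5.10}); hence $\overline{\Image(d)}/\Image(d)$ is finite-dimensional, and it remains to see that it vanishes.

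For this I would invoke the functional-analytic statement: if $T : \Dom(T) \subseteq H_1 \rightarrow H_2$ is a closed densely-defined operator between Hilbert spaces whose range has finite algebraic codimension in $W := \overline{\Image(T)}$, then $\Image(T) = W$. Applied to $T = d$ this finishes the corollary. The argument is short: equip $\Dom(T)$ with the graph inner product, making it a Hilbert space $\mathcal{D}$ on which $T$ is bounded, and choose a finite-dimensional $F \subseteq W$ with $\Image(T) \oplus F = W$ as vector spaces, so that $F \cap \Image(T) = 0$. The map $\widetilde{T} : \mathcal{D} \oplus F \rightarrow W$ defined by $\widetilde{T}(x,f) = Tx + f$ is bounded and surjective, hence open by the open mapping theorem, and its kernel is $\Ker(T) \oplus \{0\}$ because $Tx \in F \cap \Image(T) = 0$. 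Therefore $\widetilde{T}$ descends to a topological isomorphism $(\mathcal{D} \oplus F)/(\Ker(T) \oplus \{0\}) \rightarrow W$ which carries the closed subspace $(\mathcal{D} \oplus \{0\})/(\Ker(T) \oplus \{0\})$ onto $\Image(T)$; hence $\Image(T)$ is closed in $W$, i.e. $\Image(T) = W = \overline{\Image(T)}$.

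The point to be careful about — and the only real subtlety — is that finite codimension by itself does not force a subspace to be closed (there exist dense hyperplanes in any infinite-dimensional Hilbert space), so the closedness of $d$ from Lemma \ref{lem2.8} must genuinely be used; this is precisely what the open mapping argument exploits, via the auxiliary map $\widetilde T$ on $\mathcal{D} \oplus F$. A secondary point, which should be stated but is routine, is the finite-dimensionality of $\IH^{GM}_{n-p}(X;\mathcal{O})$: this follows either from the general theory of intersection homology of compact stratified spaces or directly from the finite cohomologically constructible stratification used in proving Proposition \ref{prop5.10}. (The same input — finiteness of $\mathcal{H}^p_{L^2}(X)$ — also yields Corollary \ref{cor5.8}, since $\Ker(\triangle_p) \cong \Ker(d)/\overline{\Image(d)}$ is a quotient of $\mathcal{H}^p_{L^2}(X)$.)
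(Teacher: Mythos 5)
Your proof is correct and is precisely the argument the paper intends: the paper leaves Corollary~\ref{cor5.9} unproved, but the same reasoning (finite-dimensional unreduced $L^2$-cohomology $\Rightarrow$ closed image, via the standard closed-range lemma for closed operators) is invoked explicitly in the proof of Proposition~\ref{prop5.4}, where from equation (\ref{5.13}) ``the unreduced $L^2$-cohomology of $V_i$ is finite dimensional and so $\Image(d_i)$ is closed.'' Your careful treatment of the functional-analytic lemma --- including the observation that closedness of $d$ (Lemma~\ref{lem2.8}) is essential because finite codimension alone does not imply closedness --- and your remark about finite-dimensionality of $\IH^{GM}_{n-p}(X;\mathcal{O})$ via the finite stratification are both on point.
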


In particular, $\Ker(\triangle_*) \cong {\mathcal H}^*_{L^2}(X)$.

\subsection{Compactness of the resolvent} \label{subsec5.2}

\begin{proposition} \label{prop5.4}
For any $X \in {\mathcal C}_n$, and for any
$p \in [0, n]$, the operator $(I + \triangle_p)^{-1}$
is a compact operator on $\Omega^p_{L^2}(X)/\overline{\Image(d)}$.
\end{proposition}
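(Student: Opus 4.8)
The plan is to deduce the statement from the discreteness of the spectrum of the de~Rham Hodge Laplacian, to prove that by induction on $n$, to obtain the discreteness on metric cones by separation of variables, and to globalize by an IMS-type localization argument.

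\textbf{Reduction to a single operator.} Let $\triangle^{Hodge}_*$ be the nonnegative self-adjoint operator attached to the closed, densely defined quadratic form $Q^{Hodge}(\omega) = \|d\omega\|^2 + \|d^* \omega\|^2$ with domain $\Dom(d) \cap \Dom(d^*)$, where $d$ is the closed operator of Section \ref{sec2} and $d^*$ its (closed, densely defined) adjoint. By the Hodge decomposition (\ref{2.17}) in each degree, together with the fact that $d$ intertwines $d^*d$ on $(\Ker d)^\perp$ with $dd^*$ on $\overline{\Image(d)}$ (so that these operators have the same positive spectrum), the spectrum of $\triangle^{Hodge}_p$ is the disjoint union of $\dim \Ker(\triangle_p)$ copies of $\{0\}$ and the positive parts of the spectra of $\triangle_{p-1}$ and $\triangle_p$ on $\Omega^*_{L^2}(X)/\overline{\Image(d)}$, counted with multiplicity. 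Since $\dim \Ker(\triangle_p) < \infty$ by Corollary \ref{cor5.8}, it therefore suffices to prove that $\triangle^{Hodge}_*$ has compact resolvent, i.e.\ that for every $M < \infty$ the spectral subspace $\mathbf{1}_{[0,M]}(\triangle^{Hodge}_*)$ is finite-dimensional. I would prove this by induction on $n$; when $n = 0$, $X$ is a finite metric space and $\Omega^*_{L^2}(X)$ is finite-dimensional.

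\textbf{Local model (the main obstacle).} The crux is to show that for $Y \in \mathcal{C}_{n-1}$ with $\diam(Y) \le \pi$ and $\epsilon > 0$, the truncated metric cone $\overline{C_\epsilon Y}$, with metric $dr^2 + r^2 g_Y$ and relative boundary conditions at $r = \epsilon$, has compact resolvent. One separates variables: a $p$-form is $\alpha(r) + dr \wedge \beta(r)$ with $\alpha(r) \in \Omega^p(Y)$ and $\beta(r) \in \Omega^{p-1}(Y)$, and expanding $\alpha, \beta$ in eigenforms of $\triangle^{Hodge}_Y$ exhibits $\triangle^{Hodge}_{\overline{C_\epsilon Y}}$ as an orthogonal sum of ordinary Sturm--Liouville operators on $(0,\epsilon]$, one for each eigenform, with potential of the form $(\lambda + c)r^{-2}$ ($\lambda$ the corresponding $Y$-eigenvalue, $c$ a universal constant), together with the one-dimensional ``harmonic'' sectors that reproduce (\ref{5.11})--(\ref{5.16}). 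By the inductive hypothesis applied to $Y \in \mathcal{C}_{n-1}$, together with Corollary \ref{cor5.8} for $Y$, the operator $\triangle^{Hodge}_Y$ has discrete spectrum with finite multiplicities, so only finitely many sectors have $\lambda \le M\epsilon^2$; each Sturm--Liouville operator on a bounded interval has compact resolvent, and for large $\lambda$ the bottom of its spectrum exceeds $(\lambda + c)\epsilon^{-2} > M$. Hence $\mathbf{1}_{[0,M]}(\triangle^{Hodge}_{\overline{C_\epsilon Y}})$ is finite-dimensional. The delicate point, which I expect to be the principal difficulty, is the precise identification of the separated operators and of their domains at the cone vertex $r = 0$, as in the computations underlying (\ref{5.13})--(\ref{5.16}) and in Cheeger's cone analysis.

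\textbf{Globalization.} Since $X \in \mathcal{C}_n$, cover $X$ by finitely many open sets $U_i$ that are biLipschitz to truncated cones $C_{\epsilon_i}Y_i$ with $Y_i \in \mathcal{C}_{n-1}$; after shrinking, fix a Lipschitz partition of unity $\{\eta_i\}$ with $\sum_i \eta_i^2 = 1$ and $\supp(\eta_i)$ compact in $U_i$, and $\epsilon_i' < \epsilon_i$ with $\supp(\eta_i) \subset \overline{C_{\epsilon_i'}Y_i}$. Each $\triangle^{Hodge}$ on $U_i$ has compact resolvent: its spectrum is governed, up to the finite-dimensional cohomology $\mathcal{H}^*_{L^2}(\overline{C_{\epsilon_i'}Y_i})$ (finite by the cone computations (\ref{5.11}), (\ref{5.13}) and Corollary \ref{cor5.8} for $Y_i$), by the quotient Laplacians $\triangle_p$, and those are biLipschitz stable by Lemma \ref{lem4.3} and have compact resolvent by the local model step (one passes to the quotient Laplacians precisely because it is they, not $\triangle^{Hodge}$ itself, that behave well under biLipschitz maps; compare Remark \ref{rem2.21}). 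For $\omega \in \Dom(Q^{Hodge}_X)$ one checks $\eta_i \omega \in \Dom(Q^{Hodge}_{U_i})$, and the Leibniz rule together with $\sum_i \eta_i^2 = 1$ gives the localization identities $\sum_i Q^{Hodge}_{U_i}(\eta_i \omega) = Q^{Hodge}_X(\omega) + \sum_i \int_{X^*} |d\eta_i|^2 |\omega|^2 \, \dvol_{X^*} \le Q^{Hodge}_X(\omega) + C\|\omega\|^2$ and $\sum_i \|\eta_i \omega\|^2 = \|\omega\|^2$, where $C = \sup_{x} \sum_i |d\eta_i(x)|^2$. If $V \subset \Dom(Q^{Hodge}_X)$ satisfies $Q^{Hodge}_X \le M\|\cdot\|^2$ on $V$, then $\omega \mapsto (\eta_i \omega)_i$ is injective on $V$ (since $\sum_i \eta_i^2 \omega = \omega$); setting $P_i = \mathbf{1}_{[0, 4(M+C)]}(\triangle^{Hodge}_{U_i})$, which is of finite rank, the bound $\sum_i Q^{Hodge}_{U_i}(\eta_i \omega) \le (M+C)\sum_i \|\eta_i \omega\|^2$ forces $\sum_i \|(I - P_i)\eta_i \omega\|^2 \le \tfrac14 \sum_i \|\eta_i \omega\|^2$, so $\bigoplus_i P_i$ is injective on the image of $V$ and $\dim V \le \sum_i \rk(P_i) < \infty$. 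Hence $\mathbf{1}_{[0,M]}(\triangle^{Hodge}_*)$ is finite-dimensional for every $M$, which completes the induction and, by the reduction above, proves the proposition.
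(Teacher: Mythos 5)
Your proof is correct, but it takes a genuinely different route from the paper's. Both proofs share the skeleton — induction on $n$, analysis of the local cone model by separation of variables, globalization via a finite cover by cone neighborhoods — but they diverge at two structural points. (i) You pass to the full de~Rham--Hodge Laplacian $\triangle^{Hodge}_*$ and show it has compact resolvent; the paper never leaves $\triangle_p = d^*d$ on $\Omega^p_{L^2}/\overline{\Image(d)}$. (ii) You globalize by an IMS-type spectral-subspace bound built from a partition of unity with $\sum_i \eta_i^2 = 1$; the paper instead uses Corollary \ref{cor5.9} and the open mapping theorem to get a bounded $d^{-1}$ on $\Image(d)$ and then constructs an explicit parametrix $A = q\sum_i \sigma_i L_i d_i^{-1}(\cdot|_{V_i})$ with $dA = I + K$, $K$ compact. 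The IMS route is arguably more self-contained (it never needs the open mapping theorem, and it re-derives the closedness of $\Image(d)$ rather than citing Corollary \ref{cor5.9}), but the detour through $\triangle^{Hodge}$ has a real cost at your ``delicate point'': the separated radial operators for $\triangle^{Hodge}$ on $C_\epsilon Y$ involve both $d$ and $d^*$, so the analysis at $r=0$ is a $2\times 2$ or $4\times 4$ coupled system, and one must verify that the particular self-adjoint extension (the one determined by the maximal closed $d$ and its adjoint) is the one being computed. This is exactly the issue the paper sidesteps, as it notes parenthetically in the proof of Lemma \ref{lem5.6}: since the paper works only with $d_i^* d_i$ and never enforces Hodge duality, the ideal-boundary-condition subtleties at the cone vertex do not arise. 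Your extension is nevertheless uniquely determined by the quadratic form $\|d\omega\|^2 + \|d^*\omega\|^2$ with domain $\Dom(d)\cap\Dom(d^*)$, so there is no ambiguity, only more bookkeeping; you are right to flag this as the principal labor. Beyond that, the pieces you invoke — the spectral identification of $\sigma(\triangle^{Hodge}_p)$ via the Hodge decomposition (\ref{2.17}) and polar decomposition of $d$, biLipschitz stability of $\lambda_{k,p}$ via Lemma \ref{lem4.3}, finiteness of the local $L^2$-cohomology via (\ref{5.11})--(\ref{5.14}), and the IMS identity $\sum_i Q^{Hodge}_{U_i}(\eta_i\omega) = Q^{Hodge}_X(\omega) + \sum_i \int |d\eta_i|^2|\omega|^2$ using $\sum_i 2\eta_i\,d\eta_i = 0$ — all hold as you state them.
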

\begin{proof}
We will prove the proposition by induction on $n$.  It is true
if $n=0$.

Using Corollary \ref{cor5.9},
we have
\begin{equation}
\Omega^p_{L^2,d}(X)/{\Image(d)} =
\Omega^p_{L^2,d}(X)/\overline{\Image(d)} \cong
{\mathcal H}^p_{L^2}(X) \oplus
\left( \Omega^p_{L^2,d}(X)/\Ker(d) \right).
\end{equation}
The restriction of $I + \triangle_p$ to
$\Omega^p_{L^2,d}(X)/\Ker(d)$ equals $I + d^* d$.
Let $\Omega^p_{H^1,d}(X)/\Ker(d) \subset \Omega^p_{L^2}(X)/\Ker(d)$
denote $\Omega^p_{L^2,d}(X)/\Ker(d)$ with the Hilbert space
norm $\| \omega \|_{H^1}^2 =
\| d\omega \|_{L^2}^2 + \| \omega \|_{L^2}^2$.
The map $d : \Omega^p_{H^1,d}(X)/\Ker(d) \rightarrow
\Image(d)$ is invertible. Since $\Image(d)$ is closed in
$\Omega^{p+1}_{L^2}(X)$,
the open mapping theorem implies that the inverse
$d^{-1} : \Image(d) \rightarrow \Omega^p_{H^1,d}(X)/\Ker(d)
\subset \Omega^p_{L^2}(X)/\Ker(d)$ is $L^2$-bounded.
As $(d^* d)^{-1} = d^{-1} \left( d^{-1} \right)^*$, it suffices to show
that $d^{-1}$ is compact.

We will construct an approximate inverse of $d$.
Given $x \in X$, let $V_x$ be a neighborhood of $x$ for which there is a
biLipschitz homeomorphism
$\phi_x : (V_x, x) \rightarrow (CY_x(2), \star_x)$
with $Y_x \in {\mathcal C}_{n-1}$ of diameter at most $\pi$.
(Note that the parameter $2$ in $CY_x(2)$ can always be chosen.)
Put $U_x = \phi_x^{-1}(CY_x(1))$.
As $\{U_x\}_{x \in X}$ is an open covering of $X$, we can choose a
finite subcovering
$\{U_i\}_{i=1}^N$, where we write $U_i = U_{x_i}$. 
We write the corresponding biLipschitz
homeomorphisms as $\phi_i \: : \: (V_i, x_i) \rightarrow (CY_i(2), \star_i)$.

Let $\Omega^{*}_{L^2}(V_i)$ be the square-integrable forms of $V_i$.
Let $\Omega^{*}_{L^2,d,abs}(V_i)$ be the square-integrable elements of
$\Omega^{*}_{L^2_{loc},d}(V_i)$, the latter being defined in
Section \ref{sec3}.
In terms of the conical coordinate $r$ on $V_i$, an element of
$\Omega^{*}_{L^2}(V_i)$
can be written as $\omega = \omega_0(r) + dr \wedge \omega_1(r)$, with
$\omega_0(r) \in \Omega^*_{L^2}(Y_i)$ and 
$\omega_1(r) \in \Omega^{*-1}_{L^2}(Y_i)$.
If $\omega \in \Omega^{*}_{L^2,d,abs}(V_i)$ then
$\omega_0(r) \in \Omega^*_{L^2,d}(Y_i)$,
$\omega_1(r) \in \Omega^{*-1}_{L^2,d}(Y_i)$, and
$d_{Y_i} \omega_0$ and
$\partial_r \omega_0 - d_{Y_i} \omega_1$
are  square-integrable on $V_i$.
Let $d_i : \Omega^*_{L^2,d,abs}(V_i) \rightarrow
\Omega^{*+1}_{L^2}(V_i)$ be the differential on $V_i$.
From (\ref{5.13}), the unreduced $L^2$-cohomology of $V_i$ is finite
dimensional and so $\Image(d_i)$ is closed in
$\Omega^{p+1}_{L^2}(V_i)$.
As before, the map $d_i : \Omega^p_{L^2,d,abs}(V_i)/\Ker(d_i)
\rightarrow \Image(d_i)$ has an inverse that extends to a
bounded map $d_i^{-1} : \Image(d_i) \rightarrow
  \Omega^p_{L^2}(V_i)/\Ker(d_i)$.

\begin{lemma} \label{lem5.6}
For each $i$, the operator $d_i^{-1}$ is compact.
\end{lemma}
\begin{proof}
Under a biLipschitz change of metric on $V_i$,
one obtains equivalent norms on $\Image(d_i)$ and
$\Omega^p_{L^2}(V_i)/\Ker(d_i)$.
Hence to prove the lemma,
we can replace $V_i$ by $CY_i(2)$ with its conical metric.

The quadratic form $Q_i$ on
$\Omega^p_{L^2,d_i,abs}(CY_i(2))/\Ker(d_i)$
is defined as in (\ref{2.11}).
The corresponding operator $d_i^* d_i$, densely defined
on $\Omega^p_{L^2}(CY_i(2))/\Ker(d_i)$, has a domain whose
elements satisfy absolute (Neumann) boundary conditions on
$Y_i = \partial \overline{CY_i(2)}$. 
The spectrum of $d_i^* d_i$ can
be explicitly computed using separation of variables.  If $Y_i$ is a
smooth manifold then this was done in
\cite{Hartmann-Spreafico (2017),Vertman (2009)}.
(Since we are not interested in enforcing Hodge duality,
subtleties about
ideal boundary conditions do not arise.) In our case,
by the induction assumption, the differential form Laplacian on
$Y_i$ has a discrete spectrum with finite multiplicities.
Then by the same separation of variable
argument, this is also true for $d_i^* d_i$ on $CY_i(2)$.
From the explicit spectral decomposition, one sees that
$d_i^{-1} = (d_i^* d_i)^{-1} d_i^*$ is a compact operator on
$\Image(d_i)$.
\end{proof}

There is a restriction map from $\Image(d) \subset \Omega^*_{L^2}(X)$
to $\Image(d_i) \subset \Omega^*_{L^2}(V_i)$.
Let $L_i : \Omega^*_{L^2}(V_i)/\Ker(d_i) \rightarrow
\Ker(d_i)^\perp$ be the lifting isomorphism. Let
$q : \Omega^*_{L^2}(X) \rightarrow \Omega^*_{L^2}(X)/\Ker(d)$
be the quotient map.
Let $\{\sigma_i\}_{i=1}^N$ be a Lipschitz partition of unity subordinate to
$\{U_i\}_{i=1}^N$.
Define a compact operator
$A : \Image(d) \rightarrow \Omega^p_{L^2}(X)/\Ker(d)$ by
\begin{equation}
  A\omega =
  q \sum_{i=1}^N \sigma_i L_i d_i^{-1} \left( \omega \Big|_{V_i} \right).
\end{equation}
Then
\begin{equation}
dA\omega =
\sum_{i=1}^N d\sigma_i \wedge L_i d_i^{-1} \left( \omega \Big|_{V_i} \right)
+ \omega.
\end{equation}
The operator $K$ given by
\begin{equation}
K\omega =
\sum_{i=1}^N d\sigma_i \wedge L_i d_i^{-1} \left( \omega \Big|_{V_i} \right)
\end{equation}
is compact.  As $dA = I+K$ and $d^{-1}$ is bounded, we see that
$d^{-1} = A - d^{-1} K$ is compact.
\end{proof}

This proves Theorem \ref{thm1.16}.


\begin{thebibliography}{10}

\bibitem{Berard (1988)} P. B\'erard, ``From vanishing theorems to
  estimating theorems: the Bochner technique revisited'',
  Bull. Amer. Math. Soc. 19, p. 371–406 (1988) 

\bibitem{Burago-Burago-Ivanov (2001)} D. Burago, Y. Burago and S. Ivanov,
\underline{Metric geometry}, Amer. Math. Soc., Providence (2001)
  
\bibitem{Burago-Gromov-Perelman (1992)} Y. Burago, M. Gromov and
G. Perelman, ``A.D. Alexandrov's spaces with curvature bounded from below'',
Russ. Math. Surv. 47, p. 1-58 (1992)

\bibitem{Cheeger (1983)} J. Cheeger, ``Spectral geometry of singular
  Riemannian spaces'', J. Diff. Geom. 18, p. 575-657 (1983)

\bibitem{Cheeger-Colding (2000)} J. Cheeger and T. Colding,
  ``The structure of spaces with Ricci curvature bounded below III'',
  J. Diff. Geom. 54, p. 37-74 (2000)
  
\bibitem{Cheeger-Goresky-MacPherson (1982)} J. Cheeger, M. Goresky and
  R. MacPherson, ``$L^2$-cohomology and intersection homology of
  singular algebraic varieties'', in
  \underline{Seminar on differential geometry}, Princeton University Press,
  Princeton, p. 303-340 (1982)

\bibitem{Cheng (1975)} S.-Y. Cheng, ``Eigenvalue comparison theorems and
  its geometric applications'', Math. Z. 143, p. 289-297 (1975)

\bibitem{DeCecco-Palmieri (1991)} G. De Cecco and G. Palmieri,
  ``Integral distance on a Lipschitz Riemannian manifold'',
  Math. Z. 207, p. 223–243 (1991) 
  
\bibitem{Dodziuk (1982)} J. Dodziuk, ``Eigenvalues of the Laplacian on
  forms'', Proc. Amer. Math. Soc. 85, p. 437- 443 (1982)
  
\bibitem{Friedman (2007)} G. Friedman,
  ``Singular chain intersection homology for traditional and
  super-perversities'',
  Trans. Amer. Math. Soc. 359, p. 1977-2019 (2007)

\bibitem{Friedman (2011)} G. Friedman,
  ``An introduction to intersection homology with general perversity
  functions'', in \underline{Topology of stratified spaces} ,
  Mathematical Sciences Research Institute Publications 58,
  Cambridge University Press, p. 177-222 (2011) 
  
\bibitem{Goresky-MacPherson (1983)} M. Goresky and R. MacPherson,
  ``Intersection homology II'', Inv. Math 71, p. 77-129 (1983)

\bibitem{Gromov (1981)} M. Gromov, ``Curvature, diameter and Betti numbers'',
  Comm. Math. Helv. 56,p. 179-195 (1981)
  
\bibitem{Habegger-Saper (1991)} N. Habegger and L. Saper,
  ``Intersection cohomology of cs-spaces and Zeeman's filtration'',
  Inv. Math. 105, p. 247-272 (1991)
  
\bibitem{Hartmann-Spreafico (2017)} L. Hartmann and M. Spreafico,
``The analytic torsion of the finite metric cone over a compact
manifold'', J. Math. Soc. Japan 69, p. 311-371 (2017)

\bibitem{Harvey-Searle (2016)} J. Harvey and C. Searle,
``Orientation and symmetries of Alexandrov spaces with applications in
positive curvature'', J. Geom. Anal. 27, p. 1636-1666 (2017) 

\bibitem{Honda (2015)} S. Honda, ``Spectral convergence under bounded
  Ricci curvature'', J. Funct. Anal. 273, p. 1577-1662 (2017)

\bibitem{Kapovitch (2007)} V. Kapovitch,
  ``Perelman's stability theorem'', in
  \underline{Surveys in differential geometry XI},
  International Press, Somerville, p. 103-136 (2007)

\bibitem{King (1985)} H. King, ``Topological invariance of intersection
homology without sheaves'', Top. and its Appl. 20, p. 149-160 (1985)

\bibitem{Kirwan  (2006)} F. Kirwan,
  \underline{An introduction to intersection homology theory},
    second edition, Chapman and Hall/CRC, Boca Raton (2006) 

\bibitem{Kuwae-Machigashira-Shioya (2001)}
  K. Kuwae, Y. Machigashira and T. Shioya,
  ``Sobolev spaces, Laplacian, and heat kernel on Alexandrov spaces'',
  Math. Z. 238, p. 269-316 (2001)
  (2001)

\bibitem{Lott (1996)} J. Lott
``The zero-in-the-spectrum question'', Enseign. Math. 42, p. 341-376 (1996)
  
\bibitem{Mitsuishi (2016)} A. Mitsuishi,
``Orientability and fundamental classes of Alexandrov spaces with
applications'', preprint, https://arxiv.org/abs/1610.08024 (2016)

\bibitem{Mitsuishi-Yamaguchi (2014)} A. Mitsuishi and T. Yamaguchi,
  ``Locally Lipschitz contractibility of Alexandrov spaces and its
  applications'', Pac. J. Math. 270, p. 393-421 (2014)
  
\bibitem{Nagase (1986)} M. Nagase, ``Sheaf theoretic $L^2$-cohomology'',
in \underline{Advanced Studies in Pure Math. 8}, North Holland, p.
273-279 (1986)
  
\bibitem{Perelman} G. Perelman, ``DC structure on Alexandrov space'',
preprint, posted at http://www.math.psu.edu/petrunin/papers/alexandrov/Cstructure.pdf

\bibitem{Perelman2} G. Perelman, ``Alexandrov's space with curvatures
  bounded from below II'', preprint, posted at
  http://www.math.psu.edu/petrunin/papers/alexandrov/perelmanASWCBFB2+.pdf

\bibitem{Reed-Simon (1980)} M. Reed and B. Simon,
\underline{Methods of modern mathematical physics I},
Academic Press, San Diego (1980)

\bibitem{Reed-Simon (1975)} M. Reed and B. Simon,
\underline{Methods of modern mathematical physics II},
Academic Press, San Diego (1975)

\bibitem{Reed-Simon (1978)} M. Reed and B. Simon,
\underline{Methods of modern mathematical physics IV},
Academic Press, San Diego (1978)

\bibitem{Shioya (2001)} T. Shioya,
  ``Convergence of Alexandrov spaces and spectrum of Laplacian'',
  J. Math. Soc. Japan 53, p. 1-15 (2001)

\bibitem{Siebenmann (1972)}  L. Siebenmann,
  ``Deformation of homeomorphisms on stratified sets I, II'', Comm. Math.
  Helv. 47, p. 123-136,137-163 (1972)

\bibitem{Teleman (1983)} N. Teleman,
``The index of signature operators on Lipschitz manifolds'',
Publ. Math. IHES 58, p. 39-78 (1983)

\bibitem{Vertman (2009)} B. Vertman,
``Analytic torsion of a bounded generalized cone'',
Comm. Math. Phys. 290, p. 813-860 (2009)  

\bibitem{Yamaguchi (1991)} T. Yamaguchi, ``Collapsing and pinching under
  a lower curvature bound'', Ann. Math. 133, p. 317-357 (1991)

\bibitem{Youssin (1994)} B. Youssin, ``$L^p$ cohomology of cones and
  horns'', J. Diff. Geom. 39, p. 559-603 (1994)

\end{thebibliography}
\end{document}